\renewcommand{\epsilon}{\varepsilon}
\numberwithin{equation}{section}
\newtheoremstyle{thmlemcorr}{10pt}{10pt}{\itshape}{}{\bfseries}{.}{10pt}{{\thmname{#1}\thmnumber{ #2}\thmnote{ (#3)}}}
\newtheoremstyle{thmlemcorr*}{10pt}{10pt}{\itshape}{}{\bfseries}{.}\newline{{\thmname{#1}\thmnumber{ #2}\thmnote{ (#3)}}}
\newtheoremstyle{defi}{10pt}{10pt}{\itshape}{}{\bfseries}{.}{10pt}{{\thmname{#1}\thmnumber{ #2}\thmnote{ (#3)}}}
\newtheoremstyle{remexample}{10pt}{10pt}{}{}{\bfseries}{.}{10pt}{{\thmname{#1}\thmnumber{ #2}\thmnote{ (#3)}}}
\newtheoremstyle{ass}{10pt}{10pt}{}{}{\bfseries}{.}{10pt}{{\thmname{#1}\thmnumber{ A#2}\thmnote{ (#3)}}}
\theoremstyle{thmlemcorr}
\newtheorem{theorem}{Theorem}
\numberwithin{theorem}{section}
\newtheorem{lemma}[theorem]{Lemma}
\newtheorem{proposition}[theorem]{Proposition}
\theoremstyle{thmlemcorr*}
\newtheorem{theorem*}{Theorem}
\newtheorem{lemma*}[theorem]{Lemma}
\newtheorem{corollary*}[theorem]{Corollary}
\newtheorem{proposition*}[theorem]{Proposition}
\newtheorem{problem*}[theorem]{Problem}
\newtheorem{conjecture*}[theorem]{Conjecture}
\theoremstyle{defi}
\newtheorem{definition}[theorem]{Definition}
\newtheorem{conclusion}[theorem]{Conclusion}
\theoremstyle{remexample}
\newenvironment{remark}
  {\pushQED{\qed}\remarkx}
  {\popQED\endremarkx}
\newenvironment{example}
  {\pushQED{\qed}\examplex}
  {\popQED\endexamplex}
\theoremstyle{ass}
\newcommand{\Acal}{\mathcal{A}}
\newcommand{\Fcal}{\mathcal{F}}
\newcommand{\Ical}{\mathcal{I}}
\newcommand{\Lcal}{\mathcal{L}}
\newcommand{\Mcal}{\mathcal{M}}
\newcommand{\Pcal}{\mathcal{P}}
\newcommand{\Vcal}{\mathcal{V}}
\newcommand{\Ycal}{\mathcal{Y}}
\newcommand{\Qbb}{\mathbb{Q}}
\DeclareMathOperator{\esssup}{ess\,sup}
\DeclareMathOperator{\dist}{dist}
\DeclareMathOperator{\supp}{supp}
\newcommand{\norm}[1]{\|#1\|}
\newcommand{\N}{\mathbb{N}}
\newcommand{\R}{\mathbb{R}}
\newcommand{\weakly}{\rightharpoonup}
\newcommand{\weaklystar}{\overset{*}\rightharpoonup}
\newcommand{\eps}{\epsilon}
\def\XXint#1#2#3{{\setbox0=\hbox{$#1{#2#3}{\int}$} 
\vcenter{\hbox{$#2#3$}}\kern-.5\wd0}}
\definecolor{vg}{rgb}{0.0, 0.35, 0.25}
\title[Cartesian convexity in the theory of nonlocal supremal functionals]{ Cartesian convexity as the key notion in the variational existence theory for nonlocal supremal functionals}
\author{Carolin Kreisbeck}
\address{Mathematisch-Geographische Fakult\"at, Katholische Universit\"at Eichst\"att-Ingolstadt, 85071 Eichst\"att, Germany}
\email{carolin.kreisbeck@ku.de}
\author{Antonella Ritorto}
\address{Mathematisch-Geographische Fakult\"at, Katholische Universit\"at Eichst\"att-Ingolstadt, 85071 Eichst\"att, Germany}
\email{antonella.ritorto@ku.de}
\author{Elvira Zappale}
\address{Dipartimento di Scienze di Base ed Applicate per l'Ingengeria/ Sapienza - Universit\`a di Roma, 00161, Rome, Italy}
\email{elvira.zappale@uniroma1.it}
\begin{document}

 
\maketitle

 \begin{abstract}
 	Motivated by the direct method in the calculus of variations in $L^{\infty}$, our main result identifies the notion of convexity characterizing the weakly$^*$ lower semicontinuity of nonlocal supremal functionals: Cartesian level convexity.
 	This new concept coincides with separate level convexity in the one-dimensional setting and is strictly weaker for higher dimensions. We discuss relaxation in the vectorial case, showing that the relaxed functional will not  generally maintain the supremal form. Apart from illustrating this fact with examples of multi-well type, we present precise criteria for structure-preservation. When the structure is preserved,  
 	a representation formula is given in terms of the Cartesian level convex envelope of the (diagonalized) original supremand. This work does not only complete the picture of the analysis initiated in [Kreisbeck \& Zappale, Calc.~Var.~PDE, 2020], but also establishes a connection with double integrals. 
 	We relate the two classes of functionals via an $L^p$-approximation in the sense of $\Gamma$-convergence for diverging integrability exponents.   
 	The proofs exploit recent results on nonlocal inclusions and their asymptotic behavior, and use tools from Young measure theory and convex analysis. 
    
\vspace{8pt}

 \noindent\textsc{MSC (2020):} 49J45 (primary); 26B25 

 \noindent\textsc{Keywords:} nonlocality, supremal functionals, double integrals, relaxation, lower semicontinuity, $\Gamma$-convergence, $L^p$-approximation.
 
 
  \noindent\textsc{Date:} \today.
 \end{abstract}


\section{Introduction}\label{sec:introduction}

The existence theory for minimizers in the calculus of variations is closely linked to convexity notions. As coercivity is usually achieved in function spaces endowed with weak($^\ast$) topologies, the central task to guarantee the applicability of the direct method lies in verifying the (sequential) weak($^\ast$) lower semicontinuity of the functionals. Over the decades, substantial effort has been put into finding necessary and sufficient conditions for the latter. Depending on the class of functionals at hand, different types of generalized convexity come into play. 
Fundamental results that identify the correct notions (under suitable growth assumptions) include in the classical integral setting: 
convexity for integral functionals on Lebesgue spaces~\cite{Att84, ButDal83},
quasiconvexity for integral functionals on Sobolev spaces with dependence on the weak gradients~\cite{Acerbi-Fusco, Mor66}, 
 $\Acal$-quasiconvexity for integral functionals subject to first-order PDE constraints~\cite{FoM99}, and in the branch of variational $L^\infty$-problems~\cite{Ar65, KatBook}:  
level convexity for supremal functionals on $W^{1,\infty}$ with supremands depending on weak gradients in the scalar case~\cite{BaLiu97, Prinari09}, and strong Morrey quasiconvexity in the corresponding vectorial case~\cite{Barron-Jensen-Wang}.

If functionals fail to be weak($^\ast$) lower semicontinuous, the existence of minimizers is not readily  guaranteed. In this case, the study of the relaxed problem, for which one replaces the functional by its weak($^\ast$) lower semicontinuous envelope, allows deducing helpful information about the limits of minimizing sequences. The difficulty is then to find explicit representation formulas for the relaxed functionals that are sufficiently easy to work with. This includes in particular to answer the question whether the formulas are structure-preserving, meaning whether they are again of the same type as the original functionals. The overall rationale for integral and supremal functionals in case structure-preservation holds is then that the relaxation can be obtained, from suitable convexification of its integrand and supremand, respectively; for an introduction to relaxation theory, see e.g.~\cite{Dac08, Rindler}.

Over the last years, the study of variational models with nonlocal features has attracted increased interest in the community, motivated by the desire to develop a solid understanding of 
global effects, long-range interactions, and singular behavior in physical phenomena and technical applications, which standard local modeling approaches cannot capture. To mention but a few selected examples from the recent literature, functionals with a nonlocal character appear in the theory of phase transitions~\cite{DMFoLe18, SaVa12},
in peridynamics~\cite{BMC, MenDu15}, in new models of hyperelasticity~\cite{BeCuMC2020, BeCuMC20},  in image processing~\cite{BrNg18, dEdLRTr20}
or in machine learning applications~\cite{AnDiKh20, HoKu20}. 
From the mathematical perspective, the presence of nonlocality in variational problems requires substantially different techniques from standard ones, which often rest on localization arguments and are therefore not applicable.  Regarding double-integral functionals, essential insights have been established by now.
Yet, there remain gaps in the literature, especially when it comes to relaxation theory; we give a brief overview of the current state of the art towards the end of the introduction, see also Table~\ref{tab1}.  
Another type of nonlocal integral functionals, namely, those involving Riesz fractional gradients are studied in~\cite{KrS22} (cf.~also~\cite{BeCuMC20, Shieh-Spector}). In this setting, a suitable translation procedure between classical and fractional gradients shows in particular that quasiconvexity of the integrand provides the correct condition for weak lower semicontinuity.   

This paper revolves around a class of nonlocal variational problems in $L^\infty$. Precisely,  our objects of interest are nonlocal homogeneous supremal functionals of the form
\begin{equation}\label{es}
	J_W(u)=\esssup_{(x, y)\in \Omega\times \Omega} W(u(x),u(y)), \qquad u\in L^{\infty}(\Omega;\R^m),
\end{equation}
where $\Omega\subset \R^n$ is a bounded open set, $m\in \N$ and $W\colon \R^m\times\R^m \to \R$ is (mostly assumed to be) lower semicontinuous and coercive; observe that the functional $J_W$ is invariant under symmetrization and diagonalization of its supremand $W$, as first shown in~\cite[Section 7.1]{KrZ19}, i.e., 
\begin{center}
	$J_W=J_{\widehat W}$ \qquad with $\widehat W(\xi, \zeta) =\max W(\{\xi, \zeta\}\times\{\xi, \zeta\})$ for $(\xi, \zeta)\in \R^m\times \R^m$; 
\end{center} 
for more details on the role of diagonalization see Section~\ref{subsec:diagmax}.
As our main contributions, we characterize the $L^{\infty}$-weak$^\ast$ lower semicontinuity of $J_W$  in terms of a new convexity condition on the symmetrized and diagonalized supremand $\widehat W$ and determine criteria for structure preservation under relaxation, see~Theorems~\ref{theo:charact-wls} and~\ref{theo:relaxation} below for the precise statements. It follows in particular that relaxations may generally not be supremal functionals anymore in the vectorial setting $m>1$. 
Our analysis provides a comprehensive result, unifying and extending the work initiated in~\cite{KrZ19}. There, two of the authors solve the scalar case ($m=1$) and consider the vectorial case under rather restrictive assumptions, which, in particular, force the relaxations to be again of the type~\eqref{es}. The 
arguments in~\cite{KrZ19} build up around the notion of separate level convexity in the vectorial components.  Inspired by the local setting of $L^\infty$-functionals, where one studies functionals \begin{equation*}
	L^{\infty}(\Omega;\R^m) \ni u\mapsto \esssup_{x \in \Omega} f(u(x) ) 
\end{equation*}
with a coercive and lower semicontinuous $f:\R^m\to \R$, this may appear like a natural choice. Indeed, as already mentioned above, in this local setting, level convexity of the supremand $f$ is known to be a necessary and sufficient condition for weak$^*$ lower semicontinuity~\cite{BaLiu97}, and relaxation is again a supremal functional with the level convexification of $f$ as the supremand~\cite{Prinari06}, cf.~also~\cite{Acerbi-Buttazzo-Prinari, CarPiPri05}. \color{black}

We show, however, that separate level convexity does not constitute the appropriate concept for characterizing the weak$^\ast$ lower semicontinuity of nonlocal supremals in dimensions greater than one.  Addressing the general vectorial case requires a change of perspective along with a new concept of convexity for sets, which we call \textit{Cartesian convexity}:
\begin{center}
	A set $E\subset \R^m\times \R^m$ is Cartesian convex if $A\times A\subset E$ implies $A^{\rm co}\times A^{\rm co}\subset E$; 
\end{center}
see~Definition~\ref{def:cartesianseparateconvexity} and Lemma~\ref{lem:alternative}. Consequently, a function $W:\R^m\times \R^m\to \R$  is said to be Cartesian level convex, if all its sublevel sets are Cartesian convex; an equivalent representation via a Jensen-type inequality is presented in~Proposition~\ref{prop:equiv-wslc}.
Note that Cartesian (level) convexity is indeed identical with separate (level) convexity if $m=1$ and is strictly weaker for higher dimensions, cf.~Remark~\ref{rem:comparison}.  

With this terminology at hand, our key characterization result now reads as follows. 

\begin{theorem}[Characterization of weak$^\ast$ lower semicontinuity]\label{theo:charact-wls} Let $W:\R^m\times \R^m\to \R$ be lower semicontinuous and coercive, and let $J_{W}$ be defined as in~\eqref{es}. Then, $J_W$ is (sequentially) weakly$^\ast$ lower semicontinuous in $L^\infty(\Omega;\R^m)$ if and only if 
	\begin{center}
		$\widehat W$ is Cartesian level convex,
	\end{center}
	i.e., all sublevel sets of $\widehat W$ are Cartesian convex.
\end{theorem}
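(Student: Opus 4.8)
The plan is to prove both implications by translating the analytic statement about $J_W$ into a geometric statement about the sublevel sets $\widehat E_\lambda:=\{(\xi,\zeta)\in\R^m\times\R^m:\widehat W(\xi,\zeta)\le\lambda\}$, which are closed because $\widehat W$, being a maximum of lower semicontinuous functions composed with continuous maps, is lower semicontinuous. The common bridge is the elementary observation that, since $J_W=J_{\widehat W}$ and $\widehat E_\lambda$ is closed, one has $J_W(u)\le\lambda$ if and only if $R(u)\times R(u)\subset\widehat E_\lambda$, where $R(u)\subset\R^m$ denotes the essential range of $u$; this uses that the essential range of $(x,y)\mapsto(u(x),u(y))$ on $\Omega\times\Omega$ equals $R(u)\times R(u)$, so that ``$(u(x),u(y))\in\widehat E_\lambda$ for a.e.\ $(x,y)$'' is equivalent to the inclusion of the (closed) essential range in the closed set $\widehat E_\lambda$.

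For the necessity I would argue by contraposition. If $\widehat W$ is not Cartesian level convex, then some $\widehat E_\lambda$ is not Cartesian convex, so by unwinding the definition (cf.\ Lemma~\ref{lem:alternative}) together with a Carath\'{e}odory reduction there is a \emph{finite} set $A=\{a_1,\dots,a_k\}$ with $A\times A\subset\widehat E_\lambda$ and a point $(\xi,\zeta)\in A^{\rm co}\times A^{\rm co}$ with $\widehat W(\xi,\zeta)>\lambda$. I then split $\Omega$ into two subsets of positive measure and construct, by a standard laminate (Riemann--Lebesgue) argument, a sequence of step functions $u_j$ with values in $A$ such that $u_j\weaklystar u$, where $u$ equals $\xi$ on the first piece and $\zeta$ on the second. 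Since $u_j$ is $A$-valued, $J_W(u_j)=J_{\widehat W}(u_j)\le\max\widehat W(A\times A)\le\lambda$, whereas $R(u)\supset\{\xi,\zeta\}$ forces $J_W(u)\ge\widehat W(\xi,\zeta)>\lambda\ge\liminf_j J_W(u_j)$, contradicting (sequential) weak$^\ast$ lower semicontinuity.

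For the sufficiency, assume $\widehat W$ is Cartesian level convex, let $u_j\weaklystar u$ in $L^\infty(\Omega;\R^m)$, and set $\lambda:=\liminf_j J_W(u_j)$, which we may assume finite. Fix $\lambda'>\lambda$ and pass to a subsequence with $J_W(u_j)\le\lambda'$ for all $j$, i.e.\ $K_j:=R(u_j)$ satisfies $K_j\times K_j\subset\widehat E_{\lambda'}$. Since $(u_j)$ is bounded in $L^\infty$, all $K_j$ lie in a fixed ball, so a further subsequence gives $K_j\to K$ in the Hausdorff metric; passing to the limit and using that $\widehat E_{\lambda'}$ is closed yields $K\times K\subset\widehat E_{\lambda'}$, and Cartesian convexity of $\widehat E_{\lambda'}$ upgrades this to $K^{\rm co}\times K^{\rm co}\subset\widehat E_{\lambda'}$. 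It remains to check $R(u)\subset K^{\rm co}$: for each $\varepsilon>0$ one has $u_j\in\big(N_\varepsilon(K)\big)^{\rm co}$ a.e.\ for $j$ large, where $N_\varepsilon(K)$ is the closed $\varepsilon$-neighbourhood of $K$ and $\big(N_\varepsilon(K)\big)^{\rm co}$ is compact and convex; since the set of functions taking values a.e.\ in a fixed closed convex set is weak$^\ast$ closed (by averaging over measurable subsets and Lebesgue differentiation), $u\in\big(N_\varepsilon(K)\big)^{\rm co}$ a.e., and letting $\varepsilon\downarrow0$ gives $u\in K^{\rm co}$ a.e., hence $R(u)\subset K^{\rm co}$. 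Therefore $R(u)\times R(u)\subset\widehat E_{\lambda'}$, i.e.\ $J_W(u)\le\lambda'$; letting $\lambda'\downarrow\lambda$ concludes.

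The heart of the matter is the sufficiency, specifically the step identifying \emph{Cartesian} convexity rather than separate level convexity as the correct notion: one must realize the pointwise constraints ``$u_j$ is $K_j$-valued'' through a \emph{single} compact set $K$, the Hausdorff limit of the essential ranges, for which the whole rectangle $K\times K$ sits inside the sublevel set, and then convexify all of $K$ at once in order to capture the barycentres $u(x)\in K^{\rm co}$. This is exactly the type of conclusion that the recent results on nonlocal (differential) inclusions and their asymptotic behaviour are designed to supply; the remaining ingredients---the essential-range reformulation, the Carath\'{e}odory reduction, the laminate construction, and the weak$^\ast$ closedness of the ``a.e.\ in $C$'' constraint---are routine.
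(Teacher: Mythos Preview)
Your argument is correct and complete, but it proceeds along a genuinely different route from the paper.

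The paper's proof is a two-line reduction: it quotes~\cite[Proposition~7.1]{KrZ19} to identify weak$^\ast$ lower semicontinuity of $J_W$ with weak$^\ast$ closedness of all $\Acal_{L_c(W)}$, and then invokes Proposition~\ref{prop:weakstar_closure}, whose proof in turn rests on the heavy structural formula~\eqref{characterization_inclusionAcal1}, namely $\Acal_K^\infty=\bigcup_{B\times B\in\Pcal_K}L^\infty(\Omega;B^{\rm co})$, imported from~\cite{KrZ19, Kreisbeck-Zappale-2019Loss}. In particular, the maximal Cartesian squares $\Pcal_K$ are the paper's organizing device for both directions.

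Your proof bypasses $\Pcal_K$ and the cited closure formula entirely. For necessity you give the explicit laminate counterexample (which is standard), and for sufficiency you replace the decomposition over maximal Cartesian squares by a single Hausdorff limit: the essential ranges $K_j=R(u_j)$ converge (after extraction) to a compact $K$ with $K\times K\subset\widehat E_{\lambda'}$, Cartesian convexity yields $K^{\rm co}\times K^{\rm co}\subset\widehat E_{\lambda'}$, and the weak$^\ast$ limit sits in $K^{\rm co}$ by the usual closed--convex constraint argument. This is more elementary and fully self-contained; it captures exactly the part of~\eqref{characterization_inclusionAcal1} needed here (one convexified square, rather than the full union over $\Pcal_K$). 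The trade-off is that the paper's approach, while heavier, simultaneously sets up the machinery required for the relaxation Theorem~\ref{theo:relaxation}, where the decomposition over $\Pcal_K$ becomes indispensable; your direct argument does not extend to that setting without further work.
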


Notice that the previous result can equivalently be stated for weak lower semicontinuity in $L^\infty$ and its sequential version, considering that the $L^\infty$-weak$^\ast$ topology admits a metrizable description on bounded sets, cf.~e.g.~\cite[A.1.5]{Fonseca-Leoni-book}; without further mentioning, we use the same reasoning throughout the paper.  

The proof of~Theorem~\ref{theo:charact-wls} passes through the theory of nonlocal inclusions. Indeed, by an adaption of the analogous statement for local supremals in~\cite{Acerbi-Buttazzo-Prinari} (see~\cite[Proposition 7.1]{KrZ19}), \color{black} the  weak$^\ast$ lower semicontinuity of  $J_W$ is equivalent to the weak$^\ast$ closedness of all its sublevel sets. 
The latter are given exactly by the functions $u\in L^\infty(\Omega;\R^m)$ that solve an inclusion problem
\begin{align}\label{nonlocalinclusion_intro}
	(u(x), u(y)) \in K \quad \text{for a.e. } (x,y)\in \Omega\times\Omega 
\end{align}
with a compact set $K\subset \R^m\times \R^m$, where $K$ is chosen as the sublevel set $L_c(W)$ of the supremand $W$ for each level $c\in \R$.
With this viewpoint, Theorem~\ref{theo:charact-wls} is a consequence of the following statement of independent interest: the set of solutions to~\eqref{nonlocalinclusion_intro} remains invariant under weak$^*$-limits if and only if the symmetrization and diagonalization of $K$, i.e., 
\begin{center}
	$\widehat{K}=:\{(\xi,\zeta)\in K: \{\xi, \zeta\}\times \{\xi, \zeta\}\in K\}$,
\end{center} is Cartesian convex, see Proposition~\ref{prop:weakstar_closure}. In showing this, we benefit from recent insights into alternative representations of the solution sets and their asymptotic behavior 
as established in~\cite{KrZ19, Kreisbeck-Zappale-2019Loss}. This allows us to  
break the complexity down to the study of nonlocal inclusions that are much easier to handle, namely, problems of the form~\eqref{nonlocalinclusion_intro} with Cartesian squares of the form $A\times A\subset \R^m\times \R^m$ as constraining sets.  

\smallskip

Whenever $J_{W}$ fails to be weakly$^*$ lower semicontinuous in $L^\infty(\Omega;\R^m)$, we pass
to the relaxed problem, for which one replaces the functional by its weak$^\ast$ lower semicontinuous envelope; precisely, the relaxation of $J_W$ is given by
\begin{align}\label{JW_rlx}
	J_W^{\rm rlx}(u) := \inf\bigl\{\liminf_{j\to \infty}{J_W(u_j)}: u_j\weaklystar u \ \text{in $L^\infty(\Omega;\R^m)$}\bigr\} 
\end{align} 
for $u\in L^\infty(\Omega;\R^m)$. For the scalar case $m=1$, it was recently established in~\cite[Theorem~1.3]{KrZ19} that 
\begin{align}\label{1d22}
	J_W^{\rm rlx}=J_{\widehat{W}^{\rm slc}},
\end{align} 
where the relaxed supremand $\widehat W^{\rm slc}$ is the separate level convexification of $\widehat W$. This shows, in particular, that the supremal structure of the functional is invariant under relaxation.
The argument in~\cite{KrZ19} relies on the observation that the separately convex hull of  a compact set $K$ (in particular,~any sublevel set $L_c(W)$ with $c\in \R$) has a specifically simple structure, namely it can be represented as a union of basic squares with corners in $K$, in formulas,
\begin{align*}
	K^{\rm sc} = \bigcup_{(\xi, \zeta)\in K} [\xi, \zeta]\times [\xi, \zeta],
\end{align*} 
and that all maximal Cartesian products of $K^{\rm sc}$ are necessarily contained in $\{[\xi, \zeta]\times [\xi, \zeta]: (\xi, \zeta)\in K\}$. 
An analogous property is no longer true in the vectorial case with $m>1$, cf.~\cite[Remark~4.8\,c)]{KrZ19}.  This observation, when applied to the sublevel sets of $W$, can be viewed as the origin for the structural change that $J_W$ may undergo during relaxation. 
For illustration, we discuss two explicit examples of $L^\infty$-functionals of multi-well type, one with and one without structure preservation, see~Example~\ref{ex:preservation} and~\ref{ex:counterexample} respectively. 
The latter relies on what we refer to as the effect of hidden Cartesian squares. It is related to the fact that a union of Cartesian squares 
$$
 \bigcup_{i\in \Ical}A_i\times A_i 
$$
with an index set $\Ical$ and $A_i\times A_i \subset \R^m\times \R^m$ for $i\in \Ical$ 
can contain $B\times B$ with $B\neq A_i$ for all $i\in \Ical$, for details see~Definition~\ref{def:hidden}.  
A condition ruling out the existence of such hidden Cartesian squares is the concept of a \textit{basic Cartesian convexification}, as introduced in~Definition~\ref{def:condition(S)}. Essentially, it says that every maximal Cartesian square of the Cartesian convex hull of a set in $\R^m\times \R^m$ coincides with the (classical) convexification of a maximal Cartesian square of the original set. 
In fact, it turns out that  this property for the sublevel sets of $W$ gives a full characterization of the cases when relaxation of $J_W$ is representable as~in~\eqref{es}. 
\color{black}
\begin{theorem}[Relaxation of nonlocal supremal functionals]\label{theo:relaxation}  Let $J_W$ and $J_W^{\rm rlx}$ be as in~\eqref{es} and~\eqref{JW_rlx} with $W:\R^m\times \R^m\to \R$ lower semicontinuous and coercive.  If every sublevel set of $\widehat{W}$ has a basic Cartesian convexification according to Definition~\ref{def:condition(S)}, 
then
\begin{align*}
	J_W^{\rm rlx}= J_{\widehat{W}^{\times\rm lc}},
\end{align*}
where $\widehat{W}^{\times\rm  lc}$ stands for the Cartesian level convex envelope of $\widehat{W}$. Otherwise, there exists no lower semicontinuous and coercive $G\colon \R^m\times\R^m\to\R$ such that $J_W^{\rm rlx} = J_G$ with $J_G$ as in~\eqref{es}. 
\end{theorem}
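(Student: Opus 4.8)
The plan is to prove both parts through an explicit description of the weak$^\ast$-closures of the sublevel sets of $J_W$. \emph{The inequality $J_{\widehat{W}^{\times\rm lc}}\le J_W^{\rm rlx}$ holds unconditionally:} $\widehat{W}^{\times\rm lc}$ is lower semicontinuous, coercive, symmetric, diagonalized, and Cartesian level convex, so $J_{\widehat{W}^{\times\rm lc}}$ is weakly$^\ast$ lower semicontinuous by Theorem~\ref{theo:charact-wls}; since $\widehat{W}^{\times\rm lc}\le\widehat W$ we have $J_{\widehat{W}^{\times\rm lc}}\le J_{\widehat W}=J_W$, and a weakly$^\ast$ lower semicontinuous functional below $J_W$ lies below the relaxation. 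Thus the first assertion amounts to the matching upper bound under the hypothesis, and the second requires ruling out any representation $J_W^{\rm rlx}=J_G$.

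\emph{The main tool} is the following characterization, which I would obtain by combining Proposition~\ref{prop:weakstar_closure} and the asymptotic analysis of~\cite{KrZ19,Kreisbeck-Zappale-2019Loss} with a Young measure argument: for compact $K\subset\R^m\times\R^m$, a function $u$ lies in the weak$^\ast$-closure of $\{u':(u'(x),u'(y))\in K \text{ a.e.}\}$ if and only if the essential range $R_u$ of $u$ satisfies $\overline{R_u}\subseteq A^{\rm co}$ for some maximal Cartesian square $A\times A$ of $\widehat K$. For necessity, the Young measure $\nu$ generated by an approximating sequence satisfies $\supp\nu_x\times\supp\nu_y\subset K$ for a.e.\ $(x,y)$, so the closed union $S$ of its supports obeys $S\times S\subset\widehat K$; hence $\overline{R_u}\times\overline{R_u}$ lies in a maximal Cartesian square $A\times A$ of $\widehat K$ and $u(x)\in S^{\rm co}\subseteq A^{\rm co}$. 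For sufficiency one uses that $A\times A\subset\widehat K$ implies $A\times A\subset K$ and that any function valued in $A^{\rm co}$ is a weak$^\ast$ limit of functions valued in $A$ (Carathéodory plus fine oscillations). Since the weak$^\ast$-closure of $\{J_W\le a\}$ is increasing in $a$, this yields $J_W^{\rm rlx}(u)=\inf\{a:\overline{R_u}\subseteq A^{\rm co}\text{ for some maximal Cartesian square }A\times A\text{ of }\widehat{L_a(W)}\}$.

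\emph{Structure-preserving case.} Fix $u$ and set $c:=J_{\widehat{W}^{\times\rm lc}}(u)$. Comparing $\widehat{W}^{\times\rm lc}$ with the Cartesian level convex function whose sublevel sets are $\bigcap_{c'>c}(\widehat{L_{c'}(W)})^{\times}$, maximality of the envelope gives $L_c(\widehat{W}^{\times\rm lc})\subseteq(\widehat{L_{c'}(W)})^{\times}$ for every $c'>c$, so $(u(x),u(y))\in(\widehat{L_{c'}(W)})^{\times}$ a.e.; hence $\overline{R_u}\times\overline{R_u}$ is a Cartesian square of $(\widehat{L_{c'}(W)})^{\times}$, lying in a maximal one, which by the hypothesis (basic Cartesian convexification of $L_{c'}(\widehat W)$) equals $A^{\rm co}\times A^{\rm co}$ for a maximal Cartesian square $A\times A$ of $\widehat{L_{c'}(W)}\subseteq L_{c'}(W)$. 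Approximating $u$ (valued in $A^{\rm co}$) weakly$^\ast$ by functions $u_j$ valued in $A$ gives $(u_j(x),u_j(y))\in L_{c'}(W)$, so $J_W(u_j)\le c'$ and $J_W^{\rm rlx}(u)\le c'$; letting $c'\downarrow c$ and recalling the first paragraph, $J_W^{\rm rlx}=J_{\widehat{W}^{\times\rm lc}}$.

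\emph{Complementary case.} Suppose $K:=\widehat{L_{c_0}(W)}=L_{c_0}(\widehat W)$ has no basic Cartesian convexification, so its closed Cartesian convex hull $K^{\times}$ possesses a maximal Cartesian square $B\times B$ with $B\ne A^{\rm co}$ for every maximal Cartesian square $A\times A$ of $K$. If $J_W^{\rm rlx}=J_G$ for some lower semicontinuous coercive $G$, we may take $G=\widehat G$, which is then Cartesian level convex (as $J_G$ is weakly$^\ast$ lower semicontinuous, Theorem~\ref{theo:charact-wls}); evaluating $J_G\le J_W$ and $J_{\widehat{W}^{\times\rm lc}}\le J_W^{\rm rlx}=J_G$ on two-valued functions yields $G\le\widehat W$ and $\widehat{W}^{\times\rm lc}\le G$, whence $G=\widehat{W}^{\times\rm lc}$ by maximality of the envelope; it therefore suffices to contradict $J_W^{\rm rlx}=J_{\widehat{W}^{\times\rm lc}}$. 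Pick $u$ with $\overline{R_u}=B$: since $B\times B\subset K^{\times}\subseteq L_{c_0}(\widehat{W}^{\times\rm lc})$ we get $J_{\widehat{W}^{\times\rm lc}}(u)\le c_0$, while by the formula for $J_W^{\rm rlx}$, for $a\le c_0$ the set $B$ is not contained in any convexified maximal Cartesian square of $\widehat{L_a(W)}$ — otherwise that square would sit in a maximal Cartesian square of $K^{\times}$ strictly containing $B\times B$ — and a compactness argument (extract a Hausdorff-convergent subsequence of would-be Cartesian squares at levels $a_k\downarrow c_0$, whose limit lands in $\bigcap_{\delta>0}\widehat{L_{c_0+\delta}(W)}=\widehat{L_{c_0}(W)}=K$ by lower semicontinuity of $W$) shows the same failure persists for all $a\le c_0+\delta_0$ with some $\delta_0>0$. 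Hence $J_W^{\rm rlx}(u)\ge c_0+\delta_0>c_0\ge J_{\widehat{W}^{\times\rm lc}}(u)$, so $J_W^{\rm rlx}\ne J_{\widehat{W}^{\times\rm lc}}$ and no such $G$ exists. \emph{The main obstacle} is precisely this last compactness/level-continuity step — quantifying how the (convexified) maximal Cartesian squares of $\widehat{L_a(W)}$ vary with $a$, which is where coercivity and lower semicontinuity of $W$ are indispensable — together with the bookkeeping confirming that the basic Cartesian convexification hypothesis is needed, and used, at every level.
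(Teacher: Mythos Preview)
Your overall strategy is sound and closely parallels the paper's: both routes pass through the level-set identity $L_c(J_W^{\rm rlx})=\Acal_{L_c(W)}^\infty$ (the paper's Lemma~\ref{lem:repres_relaxation_abstract}) together with the description $\Acal_K^\infty=\bigcup_{B\times B\in\Pcal_{\widehat K}}L^\infty(\Omega;B^{\rm co})$, and both rely on Hausdorff compactness of maximal Cartesian squares (the paper's Lemma~\ref{lem:Hausdorffconverg}) for the ``complementary'' direction. The paper packages the latter more modularly via Lemma~\ref{lem:aux1} and Proposition~\ref{prop:structure-preservation}, showing directly that $\Acal_{L_c(W)}^\infty=\Acal_{L_c(G)}$ forces $L_c(G)=L_c(\widehat W)^{\times\rm c}$ \emph{and} the basic-convexification condition at every level, without ever needing to identify $G$ with $\widehat W^{\times\rm lc}$. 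Your hands-on construction of the witness $u$ with $\overline{R_u}=B$ and the compactness argument at levels $a_k\downarrow c_0$ is essentially the content of the forward direction of Lemma~\ref{lem:aux1}, carried out in the functional setting.

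There is one genuine gap. You claim in the first paragraph that $\widehat W^{\times\rm lc}$ is lower semicontinuous unconditionally, so that Theorem~\ref{theo:charact-wls} yields $J_{\widehat W^{\times\rm lc}}\le J_W^{\rm rlx}$. But the paper establishes lower semicontinuity of $\widehat W^{\times\rm lc}$ only under the basic-convexification hypothesis (Lemma~\ref{lem:W-wslc}): its level sets are $L_c(\widehat W)^{\times\rm c}$, and whether $K^{\times\rm c}$ is always closed (equivalently compact) for compact $K$ is explicitly flagged as open just before Lemma~\ref{lem:compactness}. So your invocation of Theorem~\ref{theo:charact-wls} is unjustified precisely in the complementary case, where you then use $J_{\widehat W^{\times\rm lc}}\le J_W^{\rm rlx}=J_G$ to deduce $\widehat W^{\times\rm lc}\le G$. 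The repair is easy and in fact streamlines your argument: drop the claim $\widehat W^{\times\rm lc}\le G$ entirely. From $J_G=J_W^{\rm rlx}\le J_W$ on two-valued functions you get $G=\widehat G\le\widehat W$; since $G$ is Cartesian level convex (Theorem~\ref{theo:charact-wls} applied to the lsc coercive $G$), maximality of the envelope gives $G\le\widehat W^{\times\rm lc}$, hence $J_W^{\rm rlx}=J_G\le J_{\widehat W^{\times\rm lc}}$. Your witness $u$ then yields $J_W^{\rm rlx}(u)\ge c_0+\delta_0>c_0\ge J_{\widehat W^{\times\rm lc}}(u)\ge J_W^{\rm rlx}(u)$, the desired contradiction. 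With this adjustment the proof is complete, and the inequality $J_{\widehat W^{\times\rm lc}}\le J_W^{\rm rlx}$ (which \emph{is} true unconditionally, via the level-set route $\Acal_{L_c(W)}^\infty\subset\Acal_{L_c(\widehat W)^{\times\rm c}}$ rather than via Theorem~\ref{theo:charact-wls}) is never needed in the complementary case.
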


We remark finally that every subset of $\R\times \R$ has a basic Cartesian convexification, so that, in correspondence with~\eqref{1d22}, $J^{\rm rlx}_W=J_{\widehat{W}^{\times \rm lc}}=J_{\widehat W^{\rm slc}}$ holds in the one-dimensional setting without further restriction. 

\smallskip

To place the results above into a broader context, we establish a relation between nonlocal supremal functionals of the form~\eqref{es}, which are defined on essentially bounded functions, with homogeneous double integrals on $L^p$-spaces, that is, 
\begin{align}\label{double-integral_intro}
	L^p(\Omega;\R^m) \ni u\mapsto \int_\Omega\int_\Omega V(u(x), u(y)) \,dx\, dy
\end{align}
with $V:\R^m\times \R^m\to \R$ lower semicontinuous with standard $p$-growth. 
The existence theory for minimizers of this class of functionals was developed over the last two decades. In  a series of papers, starting with~\cite{Ped97} and continued in \cite{BMC, BePe06, Munoz, Pedregal2016, Pe21}, separate convexity of the double integrand has been identified as the necessary and sufficient condition for their weak lower semicontinuity; for a discussion of conditions in the inhomogeneous case, we refer to~\cite{BMC}. When it comes to the relaxation of~\eqref{double-integral_intro}, a comprehensive understanding of the problem is still to be developed.
Recent counterexamples in~\cite{KrZ19, TeMC20} indicate that even in the one-dimensional case, weak lower semicontinuous envelopes of functionals like~\eqref{double-integral_intro} may belong to a different class in general.  Beyond these specific examples, though, neither explicit formulas nor a general characterization of structure preservation under relaxation are currently available. To what extent, the results of this work can shed light on these open questions remains to be explored in future research.

It is well-known that the standard $L^p$-norm converges to the $L^{\infty}$-norm as $p$ goes to infinity. More generally, $L^p$-approximations hold in the local setting, connecting classical integral and supremal functionals approximatively in the limit of diverging power-law exponents, see e.g.~\cite{Ansini-Prinari, Champion-DePascale-Prinari, Eleuteri-Prinari, Pri15, PriZa20}.  
They have been used as technical tools, e.g.~for proving the existence of absolute minimizers of supremal functionals~\cite{BaJeWa01, Champion-DePascale-Prinari}, a homogenization result of $L^\infty$-functionals~\cite{Briani-Garroni-Prinari} and for the mathematical analysis of dielectric breakdown in composite materials~\cite{Garroni-Nesi-Po}. 

Stimulated by the local power-law approximations, we provide here a parallel statement in the nonlocal setting. Precisely, we perform a limit process of diverging integrability constants in terms of $\Gamma$-convergence~(cf.~Definition~\ref{def:Gamma}), which guarantees the convergence of almost-minimizers and infima of double integral functionals to minimizers and minima of a $\Gamma$-limit of nonlocal supremal type.

\begin{theorem}[Nonlocal \boldmath{$L^p$}-approximation]\label{theo:gral-vectorial-case} Let $q>1$ and $W\colon \R^m\times\R^m \to [0,\infty)$ be 
	lower semicontinuous with standard $q$-growth, i.e., there exist constants $c_1, c_2, C>0$ such that
	\begin{align}\label{lineargrowthandcoercivity}
		c_1 |(\xi,\zeta)|^q-c_2\leq W(\xi,\zeta)\leq C \,\bigl(|(\xi,\zeta)|^q+1\bigr) \qquad \text{for $(\xi,\zeta)\in \R^m\times \R^m$.}
	\end{align}
	For $p\ge q$, let $I_W^p:L^q(\Omega;\R^m)\to [0, \infty]$ be defined as 
	\begin{align*}
		I_W^p(u) = \begin{cases}
			\left(\displaystyle \int_\Omega\int_\Omega W^p(u(x), u(y)) \,dx\, dy\right)^{\frac{1}{p}}  & \text{ if } u\in L^p(\Omega;\R^m), \\
			\infty & \text{ otherwise}.
		\end{cases}
	\end{align*}
	Then, $(I_W^p)_p$ $\Gamma$-converges with respect to the weak topology in $L^q(\Omega;\R^m)$ as $p\to\infty$ to the functional $I_W^\infty:L^q(\Omega;\R^m)\to [0, \infty]$ given by 
	\begin{align*}
		I_W^\infty(u) = \begin{cases}  
			J_{W}^{\rm rlx}(u)
			& \text{ if } u\in L^\infty(\Omega;\R^m), \\
			\infty & \text{ otherwise}. 
		\end{cases}
	\end{align*}
	
Moreover, any sequence $(u_p)_p\subset L^q(\Omega;\R^m)$ with $\sup_{p\geq q} I_W^p(u_p)<\infty$ is relatively sequentially compact in the weak topology of $L^q(\Omega;\R^m)$ and the limits of weakly converging subsequences lie in $L^\infty(\Omega;\R^m)$. 
\end{theorem}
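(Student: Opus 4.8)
The plan is to prove the $\Gamma$-convergence statement by a standard two-inequality argument combined with a compactness step, using the relaxation result of Theorem~\ref{theo:relaxation} together with the monotonicity of $L^p$-norms on the (finite-measure) product space $\Omega\times\Omega$.

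First I would establish the compactness statement and the accompanying equi-integrability/boundedness facts, since these are needed throughout. If $(u_p)_p$ satisfies $\sup_{p\ge q} I_W^p(u_p) =: M < \infty$, then by the lower bound in~\eqref{lineargrowthandcoercivity} one has
\begin{align*}
  c_1\int_\Omega\int_\Omega |(u_p(x),u_p(y))|^q\,dx\,dy - c_2|\Omega|^2 \le I_W^q(u_p)^q \le |\Omega\times\Omega|^{1-q/p}\, I_W^p(u_p)^q \le |\Omega|^{2}\, M^q,
\end{align*}
using Jensen's / Hölder's inequality on the probability-like space to pass from the $L^p$-average to the $L^q$-average (after normalizing by $|\Omega\times\Omega|$). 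This bounds $(u_p)_p$ in $L^q(\Omega;\R^m)$, hence — since $q>1$ — it is relatively sequentially weakly compact. To see that weak limits lie in $L^\infty$, fix any $r\ge q$; for $p\ge r$ the same Hölder step gives a uniform bound on $\|u_p\|_{L^r(\Omega;\R^m)}$ (independent of $p$, up to constants depending on $r$, $\Omega$, $c_1$, $c_2$, $M$), so a weak limit $u$ satisfies $\|u\|_{L^r}\le \liminf_p \|u_p\|_{L^r}\le C(r)$ with $C(r)\to$ a finite limit as $r\to\infty$; letting $r\to\infty$ yields $u\in L^\infty(\Omega;\R^m)$ with an explicit bound on $\|u\|_{L^\infty}$. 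The same argument, applied to a recovery-type sequence, will also show that the $\Gamma$-limit is $+\infty$ off $L^\infty$.

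Next, the $\liminf$ inequality. Let $u_p\weakly u$ in $L^q(\Omega;\R^m)$ with $\liminf_p I_W^p(u_p)<\infty$; by the compactness step $u\in L^\infty$. Fix $r\ge q$. For $p\ge r$, Hölder on $\Omega\times\Omega$ gives $I_W^r(u_p)\le |\Omega\times\Omega|^{1/r-1/p} I_W^p(u_p)$, hence $\liminf_p I_W^r(u_p)\le |\Omega|^{2/r}\liminf_p I_W^p(u_p)$. Now $I_W^r$ is (the $r$-th root of) a double integral with integrand $W^r$ of $r$-growth; by the known weak-lower-semicontinuity theory for double integrals this functional, restricted to $L^r$, has as its weakly-$L^r$-lsc envelope the double integral with the separately convex envelope $(W^r)^{\rm sc}$ as integrand — but in fact I only need lower semicontinuity of the relaxed functional along the sequence, so it suffices to invoke that $u\mapsto \bigl(\int\int (W^r)^{\rm sc}(u(x),u(y))\bigr)^{1/r}$ is weakly-$L^r$-lsc and is $\le I_W^r$. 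This yields
\begin{align*}
  \Bigl(\int_\Omega\int_\Omega (W^r)^{\rm sc}(u(x),u(y))\,dx\,dy\Bigr)^{1/r} \le |\Omega|^{2/r}\,\liminf_{p\to\infty} I_W^p(u_p).
\end{align*}
Letting $r\to\infty$ on the left, the $r$-th root of an integral of a nonnegative function converges to the essential supremum; identifying $\lim_{r\to\infty}\bigl((W^r)^{\rm sc}\bigr)^{1/r}$ (pointwise, monotonically after symmetrizing/diagonalizing) with $\widehat W^{\times\rm lc}$, one passes to $\esssup_{(x,y)}\widehat W^{\times\rm lc}(u(x),u(y)) = J_{\widehat W^{\times\rm lc}}(u)$. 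When every sublevel set of $\widehat W$ has a basic Cartesian convexification, Theorem~\ref{theo:relaxation} identifies this with $J_W^{\rm rlx}(u)=I_W^\infty(u)$; in the general case one must instead argue directly at the level of the functionals — this is where I expect the main difficulty — by showing that the weak-$L^q$-lsc envelope of $\lim_{r} I_W^r$ already equals $J_W^{\rm rlx}$, i.e.\ that the double-integral relaxations and the supremal relaxation are compatible in the diverging-exponent limit. The clean way is to reduce, via the level-set/nonlocal-inclusion correspondence behind Theorems~\ref{theo:charact-wls}–\ref{theo:relaxation} and the asymptotic results of~\cite{KrZ19, Kreisbeck-Zappale-2019Loss}, to inclusions into Cartesian squares, for which the power-law approximation is elementary.

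Finally, the $\limsup$ (recovery sequence) inequality. Given $u\in L^\infty(\Omega;\R^m)$, I first reduce to the case $J_W^{\rm rlx}(u)<\infty$ (otherwise nothing to prove) and, by definition~\eqref{JW_rlx} of $J_W^{\rm rlx}$ and a diagonal argument, to the case where $u$ itself is a weak$^*$ limit realizing a value close to $J_W^{\rm rlx}(u)$; in fact it suffices to build a recovery sequence for each $u$ in a dense class and then pass to the limit using the lower-semicontinuity of the $\Gamma$-$\limsup$. For a fixed such $u$ with $\|u\|_{L^\infty}=:R$ and any $\delta>0$, take $u_p:=u$ for all $p$ (the constant sequence); then $u_p\weakly u$ in $L^q$ and, since $W$ has $q$-growth, $W(u(x),u(y))\le C(R^q+1)$ a.e., so
\begin{align*}
  I_W^p(u) = \Bigl(\int_\Omega\int_\Omega W^p(u(x),u(y))\,dx\,dy\Bigr)^{1/p} \xrightarrow[p\to\infty]{} \esssup_{(x,y)\in\Omega\times\Omega} W(u(x),u(y)) = J_W(u).
\end{align*}
This already gives $\Gamma\text{-}\limsup_p I_W^p(u)\le J_W(u)$ for every $u\in L^\infty$. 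Since $\Gamma$-$\limsup$ is weakly-$L^q$-lsc by construction and is bounded above by $J_W$, it is bounded above by the weakly-$L^q$-lsc envelope of $J_W$ restricted to $L^\infty$; because bounded sequences in $L^\infty$ that converge weakly in $L^q$ to an $L^\infty$ function converge weak$^*$ in $L^\infty$ (they are $L^p$-bounded for all $p$ by the same Hölder trick, hence $L^\infty$-bounded), this envelope is exactly $J_W^{\rm rlx}$. Combining with the $\liminf$ inequality and the compactness statement completes the proof, and the standard fundamental theorem of $\Gamma$-convergence then yields convergence of (almost-)minimizers and minima.
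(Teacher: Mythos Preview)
Your compactness and recovery-sequence arguments are essentially the same as the paper's (the paper diagonalizes over a recovery sequence for $J_W^{\rm rlx}$ rather than invoking lower semicontinuity of the $\Gamma$-$\limsup$, but the two are equivalent here, since $J_W(u_j)\le M$ forces $\|u_j\|_{L^\infty}$ bounded by coercivity, so weak-$L^q$ and weak$^\ast$-$L^\infty$ convergence coincide along such sequences).

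The genuine gap is in the $\liminf$ inequality. Your plan is to bound $I_W^r$ from below by the weakly lower semicontinuous double integral with integrand $(W^r)^{\rm sc}$, send $r\to\infty$, and identify the limit. Even granting a clean identification of $\lim_{r\to\infty}\bigl((W^r)^{\rm sc}\bigr)^{1/r}$ (which is not established anywhere, and is close to the open question in Remark~\ref{rem:slc_hull} whether $\widehat{W^{\rm slc}}=W^{\times\rm lc}$), the best this route can produce is the supremal lower bound $J_{\widehat W^{\times\rm lc}}(u)$. But the whole point of Theorem~\ref{theo:relaxation} and Example~\ref{ex:counterexample} is that in general $J_W^{\rm rlx}>J_{\widehat W^{\times\rm lc}}$: the relaxation need not be a nonlocal supremal at all. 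So your lower bound would be strictly below the claimed $\Gamma$-limit on some functions, and the $\liminf$ inequality would fail. You correctly flag this as ``the main difficulty'' but do not resolve it; the vague reduction to ``inclusions into Cartesian squares'' is not a proof.

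The paper bypasses this obstruction entirely by working with Young measures instead of convexifying the integrand. From $u_j\rightharpoonup u$ one extracts a Young measure $\nu$ with $u=[\nu]$, so that $v_{u_j}$ generates $\nu_x\otimes\nu_y$; the fundamental theorem for Young measures plus two nested passages $r\to\infty$, $s\to\infty$ give
\[
\liminf_{j} I_W^{p_j}(u_j)\ \ge\ \esssup_{(x,y)}\ (\nu_x\otimes\nu_y)\text{-}\esssup\, W\ =:\ \|f_{W,\nu}\|_{L^\infty(\Omega\times\Omega)}.
\]
The decisive step is then the \emph{exact} identification $\|f_{W,\nu}\|_{L^\infty}=J_W^{\rm rlx}(u)$, which follows from the Young-measure characterization of $\Acal_K^\infty$ in Remark~\ref{rem:YM_representation} (namely $[\nu]\in\Acal_{L_c(W)}^\infty\Leftrightarrow \nu\in\Vcal_{L_c(W)}$) combined with the sublevel-set description of $J_W^{\rm rlx}$ in Lemma~\ref{lem:repres_relaxation_abstract}. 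This gives $J_W^{\rm rlx}$ directly, without ever needing a supremal representation of the relaxation and hence without any structure-preservation hypothesis on $W$. That Young-measure identity is the missing idea in your proposal.
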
 

Notice that the previous result does not require any type of convexity assumptions on~the double-integrand $W$, and is new even in the one-dimensional setting. 
The proof strategy for the compactness and the construction of recovery sequences (cf.~Definition~\ref{def:Gamma}) is rather standard. 
To obtain the lower bound of the $\Gamma$-limit, we take inspiration from~\cite{Champion-DePascale-Prinari}, while exploiting recent results about the Young measure representation of the weak$^\ast$ closures of solution sets to nonlocal inclusions (see~Remark~\ref{rem:YM_representation}, and also~\cite{Kreisbeck-Zappale-2019Loss}).

Even though, the two types of functionals - double-integrals and nonlocal supremals - are linked asymptotically in the limit of diverging integrability constants, they show qualitatively very different features, as the table in Figure~\ref{tab1} summarizes at a glance. 
\color{black}
\begin{table}[h!]
	\begin{center}
		\color{black}
		\label{tab:table1}
		\begin{tabular}{|l|r|r|l}
			\hline\hspace{0.2cm}
			& \textbf{Double integrals} & \textbf{Nonlocal supremals}\phantom{abbbbb}\\ 
			\hline\hline
			invariance under symmetrization  & Yes & Yes \\ \hline
			invariance under diagonalization  & No & Yes \\ \hline
			necessary and sufficient condition  & separate convexity &  Cartesian level convexity for $m\geq 1$ \footnotesize [$\ast$] \\  for weak$^{(\ast)}$ lower semicontinuity& \footnotesize \cite{BMC, BePe06, Munoz, Pedregal2016} \color{blue}\normalsize & separate level convexity for $m=1$\footnotesize{ \cite{KrZ19} }\\ \hline
			structure preservation  & No \footnotesize \cite{Kreisbeck-Zappale-2019Loss, TeMC20} & Yes for $m=1$ \footnotesize{\cite{KrZ19}} \\ under relaxation &  & No for $m>1$\footnotesize \phantom{4}  [$\ast$] \\ \hline
			criteria for   & currently still open & basic Cartesian level convexification \footnotesize   [$\ast$] \\ structure preservation & &  \\ \hline
		\end{tabular}
		\vspace{0.2cm}
	\end{center}
	\caption{Comparing properties of double-integrals and nonlocal supremals. Results marked with [$\ast$] are proven in this paper.}\label{tab1}
\end{table}

\bigskip

\color{black} This paper is organized as follows. After the introduction, we start in Section~\ref{sec:prelim} by fixing notations and gathering some technical tools from asymptotic analysis as well as useful observations about maximal Cartesian squares; other auxiliary results that require more specific terminology have been moved to the appendix.  
Section~\ref{sec:wsc-sets} is devoted to the definition and analysis of the new notion of Cartesian (level) convexity, for both sets and functions, as well as the corresponding Cartesian convex hulls and envelopes. In particular, we prove different alternative characterizations, including a Jensen-type formula in the case of functions, discuss some relevant properties, and provide a comparison with standard notions of convexity, precisely with separate convexity.
In Section~\ref{sec:inclusions}, Cartesian convexity is identified as the necessary and sufficient condition   for the weak$^\ast$ closedness of the solution set for nonlocal inclusions, and we investigate when the weak$^\ast$ closures give rise to a nonlocal inclusion of the same type, cf.~Propositions~\ref{prop:weakstar_closure} and~\ref{prop:structure_inclusions}, respectively. 
These findings are then transferred to the context of nonlocal supremal functionals in Section~\ref{sec:lower-rlx}, where the proofs of our main results Theorem~\ref{theo:charact-wls} and Theorem~\ref{theo:relaxation} are presented. 
We illustrate the two scenarios of structure preservation during relaxation and the loss there of with examples of multi-well supremals. 
Finally, Section~\ref{sec:Lp-approx} contains the proof of Theorem~\ref{theo:gral-vectorial-case} and builds a rigorous bridge between the variational theories for nonlocal supremals and double integrals.


\section{Preliminaries}\label{sec:prelim}

We use this section to introduce notation, to recall some well-known concepts, and to prove a few auxiliary results that will be useful in the rest of the article. 

\subsection{Notation}\label{subsec_notation} 
Let $m\in \N$, $\xi, \zeta\in \R^m$, and $| \cdot |$ be the Euclidean norm in $\R^m$. We denote by $[\xi,\zeta]=\{t\xi +(1-t)\zeta \colon t\in [0,1]\}$ the segment in $\R^m$ with extreme points $\xi, \zeta$. The set $B_c(a)\subset \R^m$ is the Euclidean ball with center point $a$ of radius $c$. For a set $A \subset \R^m$, $\mathbbm{1}_A$ is the characteristic function of $A$, meaning $\mathbbm{1}_A =1$ in $A$ and $\mathbbm{1}_A =0$ in $\R^m \setminus A$. By $A^{\rm co}$, 
we mean the convex hull of $A$. The distance between a point $\xi \in\R^m$ and a set $A\subset \R^m$ is $\dist(\xi,A)=\inf_{\zeta \in A}|\xi-\zeta|$. In $\R^m\times \R^m$, we use the norm $|(\xi,\zeta)|=\max\{|\xi|, |\zeta|\}$. Then, for $A, B\subset \R^m$, 
\begin{align}\label{distmax}
	\dist \bigl((\xi, \zeta), A\times B\bigr) = \max\{\dist(\xi, A), \dist(\zeta, B)\},
\end{align}
and $d_H(A, B) = \max\{\sup_{\xi \in B} \dist(\xi, A), \sup_{\zeta\in A} \dist(\zeta, B)\}$ is the Hausdorff distance between $A$ and $B$.

Let $\mathcal M(\mathbb R^m)$ be the space of positive finite Radon measures and by $\mathcal Pr(\mathbb R^m)$, the subset of probability measures. The support of a measure $\nu\in \Mcal(\R^m)$, in formulas $\supp \nu$, consists of all the points $\xi\in \R^m$ such that $\nu(O)>0$ for every open neighborhood $O\subset \R^m$ of $\xi$.
For $\nu \in \mathcal{M}(\R^m)$, we write 
$$
[\nu] = \langle \nu, {\rm id} \rangle = \int_{\R^m} \xi \, d\nu(\xi)
$$
for its barycenter, and recall that the convex hull of any compact set $C\subset \R^m$ can be represented as \begin{equation}\label{convexhull}
	C^{\rm co} = \{[\nu]: \nu\in \mathcal Pr(\R^m),\ \supp \nu \subset C\},
\end{equation}
cf.~e.g.~\cite{Dol03}. The product measure of $\nu, \mu \in \mathcal{M}(\R^m)$ is denoted by $\nu \otimes \mu$. 

Let $1\le p\le \infty$, $\nu\in \Mcal (\R^m)$, and $U \subset \R^n$ be a bounded open set, then  $L^p_{\nu}(U;\R^m)$ stands for the standard  Lebesgue spaces. If $1\le p<\infty$ and $(u_j)_j \in L^p_{\nu}(U;\R^m)$ is a weakly convergent sequence with limit $u\in L^{p}_{\nu}(U;\R^m)$, we write $u_j \rightharpoonup u$ in $L^p_{\nu}(U;\R^m)$. Analogously, $u_j \rightharpoonup^\ast u$ in $L^{\infty}_{\nu}(U;\R^m)$ stands for the weak$^*$ convergence in $L^\infty_{\nu}(U;\R^m)$. When $m=1$, we simply write $L^p_{\nu}(U)$ and $L^{\infty}_{\nu}(U)$, if $\nu$ is the Lebesgue measure, $L^p(U;\R^m)$ and $L^{\infty}(U;\R^m)$. 

For $A\subset \R^m$, and $\nu\in \Mcal(\R^m)$, the $\nu$-essential supremum in $A$ of a Borel function $f:\R^m\to \R$ is 
\begin{equation*}
	\nu\text{-}\esssup_{\xi\in A} f(\xi) = \inf\{c\in \R: f\leq c \ \text{$\nu$-a.e. in $A$}\} =
	\inf_{N\subset A, \nu(N)=0}\sup_{\xi\in A\setminus N} f(\xi);
\end{equation*}
in short-hand, $\nu\text{-}\esssup_A f$, or simply $\nu\text{-}\esssup f$ in case $A=\R^m$. If $f$ is lower semicontinuous, then
\begin{equation}\label{ess-supp}
	\nu\text{-}\esssup_{\xi\in \R^m} f(\xi) = \sup_{\xi\in \supp \nu} f(\xi).
\end{equation}

Let $X$ be a normed space and $V:X\to \R$. Then the sublevel set of $V$ at level $c\in \R$ is denoted by
\begin{align*}
	L_c(V)=\{x\in X \colon V(x)\le c \}.
\end{align*}
As a simple consequence of this definition, $V$ can be expressed as
\begin{equation}\label{levelset-i}
	V(x) = \inf\{ c\in \R \colon x \in L_c(V)\} \quad \text{ for } x\in X.	
\end{equation}
On the other hand, if a family of monotone increasing sets $(F_c)_{c\in\R} \subset X$  satisfies
$\bigcap_{j\in\N} F_{c_j} =F_c $ for any decreasing real sequence $(c_j)_j$ with $c_j\to c$ as $j\to \infty$  and 
$V(x) = \inf\{ c\in \R \colon  x\in F_c \}$ for $x\in X$, then 
\begin{equation}\label{level-sets-ii}
	F_c=L_c(V) \quad \text{ for every $c\in \R$.}
\end{equation}

\subsection{Tools from asymptotic analysis}\label{subsectGamma} We start by recalling some elements from Young measure theory; for more on the topic, see e.g.~\cite{Fonseca-Leoni-book, Ped97-book}. 
A map $\nu \colon U \subset \R^n \to \Mcal(\R^m)$ is called a Young measure if it is essentially bounded and weakly$^\ast$ measurable with $\nu_{x} = \nu(x) \in \mathcal{P}r(\R^m)$ a.e. $x \in U$; we write $\nu \in L^{\infty}_{w}(U; \mathcal{P}r(\R^m))$.

If $(v_j)_j\subset L^p(U;\R^m)$ with $p>1$ and $\nu\in L^\infty_w(U;\mathcal{P}r(\R^m))$, the sequence $(v_j)_j$ generates the Young measure $\nu$, 
in formulas, 
\begin{center}
	$v_j\stackrel{\rm YM}{\longrightarrow} \nu$ as $j\to \infty$,
\end{center}
if 
$$
\lim_{j\to \infty} \int_U f(x) \varphi(v_{j}(x)) \, d x = \int_U f(x)  \int_{\R^m} \varphi(\xi) \, d\nu_x(\xi) \, dx
$$
for every $f\in L^1(U)$ and every $\varphi\in C_0(\R^m)$ in the closure of real-valued functions on $\R^m$ with compact support. 

The next proposition is a version of the fundamental theorem of Young measures (see \cite{Fonseca-Leoni-book, Ped97-book}), as we use it in the proof of Theorem~\ref{theo:gral-vectorial-case} in Section~\ref{sec:Lp-approx}. 
\begin{proposition}\label{prop:YM}
	Let $q>1$ and $(v_j)_j \subset L^q(U;\R^m)$. 	
	\begin{itemize} 
		\item[$(i)$] If $(v_j)_j$ is bounded in $L^q(U;\R^m)$, then there exists a subsequence (not relabeled) and a Young measure $\nu\in L^\infty_w(U;\mathcal{P}r(\R^m))$ such that $v_{j} \stackrel{\text{\rm YM\ }}{\longrightarrow} \nu$ as $j\to \infty$.
		
		\item[$(ii)$] If $(v_j)_j \subset L^q(U;\R^m)$ generates a Young measure $\nu$, then 
		$$
		\liminf_{j\to \infty} \int_{U} G(x, v_j(x))\, dx \ge \int_U \int_{\R^m} G(x,\xi) d\nu_x(\xi)\, d x 
		$$
		for any nonnegative, normal integrand $G \colon U\times \R^m \to \R$ such that $(G(\cdot, v_j(\cdot)))_j$ is equi-integrable. 
	\end{itemize}
\end{proposition}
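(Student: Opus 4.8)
This proposition is the fundamental theorem of Young measures specialized to $L^q$ with $q>1$, so the plan is to follow the classical argument (see~\cite{Fonseca-Leoni-book, Ped97-book}) and flag the single place where $q>1$ is used. For part $(i)$ I would regard the maps $x\mapsto\delta_{v_j(x)}$ as elements of $L^\infty_w(U;\Mcal(\R^m))$, which is the dual of the separable Banach space $L^1(U;C_0(\R^m))$, each of norm one. Banach--Alaoglu together with the metrizability of norm-bounded sets then yields a (non-relabeled) subsequence converging weakly$^\ast$ to some $\nu\in L^\infty_w(U;\Mcal(\R^m))$ with $\nu_x\ge 0$ and $\|\nu_x\|_{\Mcal}\le 1$ a.e.; testing against products $f(x)\varphi(\xi)$ with $f\in L^1(U)$ and $\varphi\in C_0(\R^m)$ shows that this convergence is precisely $v_j\stackrel{\rm YM}{\longrightarrow}\nu$ in the sense defined above.

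The one substantive point in $(i)$ is to rule out loss of mass, i.e.\ to show $\nu_x\in\mathcal{P}r(\R^m)$ for a.e.\ $x$. For this I would fix a measurable $E\subset U$ and cutoffs $\varphi_R\in C_0(\R^m)$ with $0\le\varphi_R\le 1$ and $\varphi_R\equiv 1$ on $B_R(0)$; the weak$^\ast$ convergence gives $\int_E\int\varphi_R\,d\nu_x\,dx=\lim_j\int_E\varphi_R(v_j)\,dx\ge |E|-\sup_j|\{x\in U:|v_j(x)|>R\}|$, and the last supremum tends to $0$ as $R\to\infty$ by Chebyshev's inequality and the $L^q$-bound. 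Letting $R\to\infty$ and then varying $E$ forces $\|\nu_x\|_{\Mcal}=1$ a.e.; essential boundedness is then immediate. This is the only place where the exponent is genuinely used (here even $q\ge 1$ would suffice).

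For part $(ii)$ the strategy is to approximate the nonnegative normal integrand $G$ from below by Carathéodory integrands with compact support in the second variable, for which the defining convergence property applies directly. Concretely, I would combine the Scorza--Dragoni theorem with the inf-convolution regularization $G_k(x,\xi)=\inf_{\eta\in\R^m}\bigl(G(x,\eta)+k|\xi-\eta|\bigr)$, obtaining an increasing sequence of nonnegative functions that are measurable in $x$, $k$-Lipschitz in $\xi$, and increase pointwise to $G$ (using lower semicontinuity of $G(x,\cdot)$); multiplying by compact cutoffs $\chi_N\in C_c(\R^m;[0,1])$, $\chi_N\equiv1$ on $B_N(0)$, produces Carathéodory integrands $\psi_{k,N}$ with compact support in $\xi$, $0\le\psi_{k,N}\le G$, and $\sup_{k,N}\psi_{k,N}=G$ pointwise. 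For each such $\psi_{k,N}$, approximating it uniformly by finite sums of products $f(x)\varphi(\xi)$ with $f\in L^1(U)$, $\varphi\in C_0(\R^m)$, and using the Young-measure convergence gives $\int_U\psi_{k,N}(x,v_j(x))\,dx\to\int_U\int_{\R^m}\psi_{k,N}(x,\xi)\,d\nu_x(\xi)\,dx$. Hence $\liminf_j\int_U G(x,v_j)\,dx\ge\int_U\int_{\R^m}\psi_{k,N}\,d\nu_x\,dx$, and a monotone limit over $k$ and $N$ yields the asserted inequality; I would also remark that equi-integrability of $(G(\cdot,v_j(\cdot)))_j$ is not actually needed for this one-sided bound, being the ingredient that would upgrade it to an identity and the form in which it is used later.

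The step I expect to demand the most care is the reduction in $(ii)$: one must simultaneously control the mere measurability of $G$ in $x$, its lower semicontinuity in $\xi$, and the possibility that mass accumulates near infinity, so that the approximants $\psi_{k,N}$ genuinely land in the class of Carathéodory integrands with compact $\xi$-support (where the $C_0$-test-function formulation of Young-measure convergence applies) while still recovering $G$ in the limit. Everything else — the weak$^\ast$ extraction, the tightness estimate in $(i)$, and the passage from product test functions to general compactly-supported Carathéodory integrands — is routine measure theory.
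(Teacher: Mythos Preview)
The paper does not actually prove this proposition: it is stated as a version of the fundamental theorem of Young measures and simply referred to the standard references~\cite{Fonseca-Leoni-book, Ped97-book}. Your sketch is correct and reproduces precisely the classical argument found in those sources, so there is nothing to compare; your observation that equi-integrability in $(ii)$ is superfluous for the one-sided inequality (and that $q\ge 1$ would already suffice for tightness in $(i)$) is accurate and worth keeping as a remark.
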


The natural notion for capturing the asymptotic behavior of parameter-dependent variational problems is $\Gamma$-convergence of the associated functionals, which we state here in a setting relevant for this paper; a comprehensive introduction to the theory can be found in~\cite{DalMaso-book}. 

\begin{definition}[\boldmath{$\Gamma$}-convergence]\label{def:Gamma}
	Let $X$ be a Banach space with separable dual and $I_j: X\to [-\infty, \infty]$ for $j\in \N$ equi-coercive functionals, that is, $I_j\geq \Psi$ for every $j\in \N$ with $\Psi(x) \to  \infty$ as $\norm{x}_X\to \infty$, where $\norm{\cdot}_X$ stands for the norm in $X$. 
 	
Then $(I_j)_j$ $\Gamma$-converges to $I:X\to [-\infty, \infty]$ with respect to the weak toplogy in $X$, in formulas $I_j\stackrel{\Gamma}{\longrightarrow} I$ as $j\to \infty$ or $\Gamma$-$\lim_{j\to \infty} I_j=I$, if these two conditions hold:
	\begin{itemize}
		\item[$(i)$] for all $u \in X$ and for every sequence $(u_j)_j\subset X$ with $u_j\weakly u$ in $X$, 
		$$ 
		I(u)\leq \liminf_{j\to \infty} I_j(u_j); 
		$$
		
		\item[$(ii)$] for all $u \in X$ with $I(u)<\infty$ there exists a sequence $(u_j)_j \subset X$ 
		such that  $u_j\weakly u$ in $X$ and
		$$ 
		I(u)\ge \limsup_{j\to \infty} I_j(u_j).  
		$$
	\end{itemize}
\end{definition}

The main feature of the $\Gamma$-convergence is that it implies the convergence of minimizers of $(I_j)_j$ to minimizers of $I$, precisely,  if $u_j$ is a minimizer for $I_j$ for each $j\in \N$ and $u$ is a cluster point of $(u_j)_j$, then $u$ is a minimizer of $I$ and 
$$
I(u)=\lim_{j\to\infty}I_j(u_j),
$$
cf.~\cite[Cororally 7.20]{DalMaso-book}. 
When a family  $(I_p)_{p}$ of functionals $I_p:X\to [-\infty, \infty]$ with a real parameter $p$ is considered, the $\Gamma$-convergence $I_p\to I$ as $p\to \infty$ means that $I_{p_j}\to I$ as $j\to \infty$ for every sequence $(p_j)_j\subset \R$ converging to $\infty$. \color{black}\smallskip

We close this section with a well-known classic, the standard $L^p$-approximation of the $L^{\infty}$-norm. Let $\nu\in \mathcal Pr (\R^m)$ and $f\in L^p_\nu(\R^m)$ for $p>1$. Then, the map $p\mapsto \norm{f}_{L_\nu^p(\R^m)}$ is monotone increasing and
\begin{equation}\label{Lpapprox-nu}
	\lim_{p\to \infty} \norm{f}_{L^p_\nu(\R^m)} = \norm{f}_{L^\infty_\nu(\R^m)};
\end{equation}
in particular, $f\in L^\infty_\nu(\R^m)$ if and only if $\sup_{p>1} \norm{f}_{L^p_\nu(\R^m)}<\infty$. 
For any bounded measurable set $U\subset \R^m$, one obtain as a special case that
\begin{align}\label{Lpapprox_classic}
	\lim_{p\to \infty} \norm{f}_{L^p(U)}  = \norm{f}_{L^\infty(U)}. 
\end{align}

\color{black}
\subsection{Diagonalization and maximal Cartesian squares}\label{subsec:diagmax} A set $E\subset \R^m\times \R^m$ is called symmetric if $(\xi, \zeta)\in E$ if and only if $(\zeta, \xi)\in E$, and following~\cite{KrZ19}, we say that $E$ is diagonal if $(\xi,\xi), (\zeta, \zeta) \in E$ for every $(\xi, \zeta)\in E$. Let $\widehat{E}$ denote the diagonalization and symmetrization of $E$, that is,
\begin{equation*}
\widehat{E}= \bigl\{ (\xi, \zeta) \in E \colon  \{\xi,\zeta\}\times \{\xi,\zeta\}\subset E \bigr\}.
\end{equation*}

These concepts can be carried over to functions by applying the above definitions to their sublevel sets. Precisely, we introduce the symmetrization and diagonalization of $W\colon \R^m\times \R^m \to \R$ as
\begin{equation}\label{eq:W-hat}
\widehat{W}(\xi, \zeta) = \max \left\{ W(\xi, \zeta), W(\zeta, \xi), W(\xi, \xi), W(\zeta,\zeta) \right\} \quad \text{for $(\xi, \zeta) \in \R^m \times \R^m$.}
\end{equation}
It holds then for any $c\in \R$ that
\begin{align*}
L_c(\widehat{W}) = \widehat{L_c(W)}.
\end{align*} 
\color{black}

\color{black} A closely related notion to diagonality, which is central for our analysis, is that of maximal Cartesian squares. 
As indicated in the introduction, the identification of certain maximal Cartesian squares allows us to obtain explicit representations for weak$^\ast$ closures on nonlocal inclusions and relaxation formulas for nonlocal supremals. 
For an arbitrary $E\subset \R^m\times \R^m$, a set $B\times B\subset E$ with $B\subset \R^m$ is said to be a maximal Cartesian square of $E$ if every $A\subset B$ with  $A\times A \subset E$ satisfies $A=B$. We refer to the set of all maximal Cartesian squares of $E$ as $\Pcal_E$. Notice that  the existence of a maximal Cartesian square in $\Pcal_E$ containing an arbitrary square $A\times A\subset E$ is guaranteed by Zorn's Lemma.

It was shown in~\cite[Lemma 4.3]{KrZ19} 
that any symmetric and diagonal set $E\subset \R^m \times \R^m$ is equal to the union of its (maximal) Cartesian squares, i.e., 
\begin{align}\label{symdiag}
E = \bigcup_{A\times A \subset E}A\times A = 
 \bigcup_{B\times B\in \Pcal_E} B\times B.
\end{align}

On the other hand, if a subset of $\R^m\times \R^m$ can be written as the union of Cartesian squares, say 
\begin{align}\label{unionAi}
E=\bigcup_{i\in \Ical}A_i\times A_i 
\end{align}
with a (possibly uncountable) index set $\Ical$ and  nonempty sets $A_i\subset \R^m$ for $i\in \Ical$, it is clearly symmetric and diagonal. 
 However, in contrast to what may seem intuitive at first glance, the maximal Cartesian squares of $E$ do not necessarily coincide with Cartesian squares 
in~\eqref{unionAi},
meaning that in general, 
\begin{equation*}
\Pcal_E \not \subset \bigcup_{i\in \Ical}\{A_i\times A_i\}. 
\end{equation*}
The next example illustrates this effect.

\begin{example}\label{ex:hidden}
 Consider three sets $A_1, A_2, A_3\subset \R^m$ with no common element, but pairwise nonempty intersections, that is, $\bigcap_{i=1}^3 A_i=\emptyset$ and $A_i\cap A_j\neq \emptyset$ for $i, j\in \{1,2,3\}$ with $i\neq j$; see Figure~\ref{fig1a} for an illustration in two dimensions.  
With $E=  \bigcup_{i=1}^3A_i\times A_i$ and the nonempty set 
	\begin{equation}\label{eqM}
	M:=(A_1\cap A_2)\cup (A_2\cap A_3)\cup (A_3\cap A_1),
	\end{equation}
it is straightforward to check that $M\times M\subset  E$ and $M\neq A_i$ for $i\in \{1,2,3\}$. Hence, 
$\Pcal_E \not \subset  \bigcup_{i=1}^3\{A_i \times A_i\}$.  \color{black}
\begin{figure}[h]
		\begin{tikzpicture}
		\color{olive}
		
		\begin{scope}[xshift=0cm]
		\draw[color=blue!50!white, thick](1,2) -- (-0.5,-1);
		\draw[color=red!50!white, thick](-1,2) -- (0.5, -1);
		\draw[color=green!50!white, thick](1.5,1) -- (-1.5,1);
		
		\filldraw [black] (0.5,1) circle (1.5pt);
		\filldraw [black] (-0.5, 1) circle (1.5pt);
		\filldraw [black] (0,0) circle (1.5pt);
		
		\draw [black] (1.3,2.3) node {\color{blue} $A_1$};
		\draw [black] (-1.2,2.3) node {\color{red} $A_2$};
		\draw [black] (-1.7, 1.3) node {\color{green} $A_3$};
		
		\draw [black] (-1.7, 0) node {\color{black} $M$};
		\end{scope}			
		\end{tikzpicture}
		\caption{Illustration of sets $A_1, A_2, A_3$, and  $M$  as in Example~\ref{ex:hidden} for $m=2$ \color{black}.}\label{fig1a}
	\end{figure}
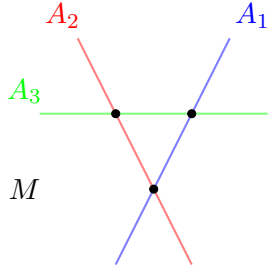
	\end{example}

\color{black}

Before following-up on this example, let us slide in three brief comments.
\begin{remark} 
	\label{rem:hidden} 
a) The primary reason for us to study sets of the form~\eqref{unionAi} is 
their relevance for our discussion of nonlocal supremal functionals of multi-well type in Section~\ref{subsec:multiwell}. 
In fact, the sublevel sets of the associated supremands have exactly this type of structure with a finite index set $\Ical$.

\color{black}

\smallskip		
b) We point out that the representation of $E\subset\R^m\times \R^m$ as in~\eqref{unionAi} in terms of a union of Cartesian squares is in general not unique, even if one supposes that each $A_i$ with $i\in\Ical$ is not contained in any $A_j$ with $j\in \Ical\setminus\{i\}$. As a simple example, take $m=1$ and
	\begin{align*}
	E=\bigcup_{(\alpha, \beta)\in [0,1]\times [0,1]}\{\alpha, \beta\}\times \{\alpha, \beta\} = [0,1]\times [0,1].
	\end{align*} 
\smallskip

c) Let $E\subset \R^m\times\R^m$ be as in~\eqref{unionAi}. 
If $A_i\setminus \bigcup_{j\in \Ical, j\neq i} A_j\neq \emptyset$ for some $i\in \Ical$, then $A_i\times A_i\in \mathcal P_E$. To show that the Cartesian square $A_i\times A_ i\subset E$ is actually maximal, suppose to the contrary that $C_i=A_i\cup \{\gamma\}$ with $\gamma\in A_k\setminus A_i$ for some $k\in \Ical$ satisfies $C_i\times C_i\subset E$. This implies that $(\xi, \gamma)$ with $\xi\in A_i\setminus \bigcup_{j\in \Ical, j\neq i} A_j$ lies neither in $A_i\times A_i$ nor in any other $A_j\times A_j$ with $j\in \Ical\setminus\{i\}$. Thus, $(\xi, \gamma)\in (C_i\times C_i)\setminus E=\emptyset$, which is a contradiction. 
\end{remark}

\medskip

With motivation from Example~\ref{ex:hidden}, we now introduce the following terminology.
\begin{definition}\label{def:hidden}
 Let $E$ be given as in~\eqref{unionAi}. Then, $M\times M\subset E$ is called a hidden Cartesian square of $E$ if $M\neq A_i$ for every $i\in \Ical$.
 If $M\times M\in \Pcal_E$, we speak of a hidden maximal Cartesian square.  
 \end{definition}

Even though the concept of hidden Cartesian squares is easily accessible in theory, finding ways to identify all of them explicitly for a given set is less obvious. The next result provides two basic conditions, a necessary one and a sufficient one, useful for detecting hidden Cartesian squares. 
\begin{lemma}\label{prop:hidden-squares}
Let $\Ical$ be an index set and $E=\bigcup_{i\in \Ical} A_i\times A_i$ with  nonempty $A_i\subset \R^m$ for $i\in \Ical$. 
Further, let $M_{\Ical'} = \bigcup_{i, j\in \Ical', i\neq j} A_i\cap A_j$ for any $\Ical'\subset \Ical$. 
\smallskip

$(i)$ If $A\times A \subset E$, then $A\subset A_i$ for some $i\in \Ical$ or
\begin{align*} 
A\subset M_\Ical = \bigcup_{i, j\in \Ical, i\neq j} A_i\cap A_j.
\end{align*} 
The latter means that each element of $A$ lies in at least two different sets $A_i$ and $A_j$ with $i, j\in \Ical$.\smallskip

$(ii)$ If $\Ical'\subset \Ical$ is such that one can find for any $l,i, j, k\in \Ical'$ with $i\neq j$ and $k\neq l$ an index $s=s(i, j, k, l)\in \Ical$ with
\begin{align}\label{assij}
(A_i\cap A_j)\cup (A_k\cap A_l)\subset A_{s},
\end{align}
then $M_{\Ical'}\times M_{\Ical'} \subset E$. 
\end{lemma}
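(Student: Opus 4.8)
\textbf{Proof strategy for Lemma~\ref{prop:hidden-squares}.}

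The plan is to handle the two parts separately, as they are of quite different character. Part $(i)$ is a pure elementary set-theoretic statement about when a Cartesian square can be contained in a union of Cartesian squares; Part $(ii)$ is a slightly more delicate "closure under unions" argument that needs the structure hypothesis~\eqref{assij}.

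For Part $(i)$, I would argue by contradiction. Suppose $A \times A \subset E$ but $A \not\subset A_i$ for every $i \in \Ical$. Then for each $i$ there is a point $\xi_i \in A \setminus A_i$. The goal is to show every $\gamma \in A$ lies in (at least) two distinct $A_i$'s. Fix $\gamma \in A$; since $\gamma \in A$, there is some index $i_0$ with $\gamma \in A_{i_0}$ (because $(\gamma,\gamma)\in A\times A\subset E=\bigcup_{i}A_i\times A_i$ forces $\gamma\in A_{i}$ for some $i$). Now take the point $\xi_{i_0} \in A \setminus A_{i_0}$ guaranteed above; then $(\gamma, \xi_{i_0}) \in A \times A \subset E$, so $(\gamma,\xi_{i_0}) \in A_j \times A_j$ for some $j$, whence $\gamma \in A_j$ and $\xi_{i_0} \in A_j$. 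Since $\xi_{i_0} \notin A_{i_0}$, we have $j \neq i_0$, so $\gamma \in A_{i_0} \cap A_j$ with $i_0 \neq j$, i.e.\ $\gamma \in M_\Ical$. As $\gamma\in A$ was arbitrary, $A \subset M_\Ical$, which is the desired conclusion.

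For Part $(ii)$, it suffices to check that $M_{\Ical'} \times M_{\Ical'} \subset E$, i.e.\ that for arbitrary $\xi, \eta \in M_{\Ical'}$ we have $(\xi,\eta) \in E$. By definition of $M_{\Ical'}$, write $\xi \in A_i \cap A_j$ and $\eta \in A_k \cap A_l$ with $i,j,k,l \in \Ical'$, $i \neq j$, $k \neq l$. Apply the hypothesis~\eqref{assij} to these indices to obtain $s = s(i,j,k,l) \in \Ical$ with $(A_i \cap A_j) \cup (A_k \cap A_l) \subset A_s$. Then both $\xi$ and $\eta$ lie in $A_s$, so $(\xi,\eta) \in A_s \times A_s \subset E$. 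This proves $M_{\Ical'} \times M_{\Ical'} \subset E$, as claimed.

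The only mild subtlety — and the step I would be most careful about — is the very first deduction in Part $(i)$: from $\gamma \in A$ one must extract an index $i_0$ with $\gamma \in A_{i_0}$, and this uses $(\gamma,\gamma) \in E$ together with the specific form $E = \bigcup_i A_i \times A_i$; it is exactly the diagonality of $E$ in disguise. One should also note the degenerate cases: if $A = \emptyset$ the statement is vacuous, and if $\Ical$ is a singleton then $M_\Ical = \emptyset$ and the hypothesis $A \not\subset A_i$ for all $i$ cannot be met, so there is nothing to prove. No compactness, convexity, or topology enters; both parts are finished by the elementary manipulations above.
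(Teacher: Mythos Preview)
Your proposal is correct and follows essentially the same approach as the paper. The only cosmetic difference is in the packaging of Part~$(i)$: the paper assumes both alternatives fail (picking $\xi_k\in A$ lying in exactly one $A_k$ and $\zeta_k\in A\setminus A_k$ to force $(\xi_k,\zeta_k)\notin E$), whereas you assume only the first alternative fails and prove the second directly; these are the same argument read contrapositively, and Part~$(ii)$ is identical.
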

\begin{proof} 	
 $(i)$ Arguing by contradiction, let $A\times A\subset E$ be nonempty and assume that for each $i\in \Ical$ there is $\zeta_i\in A\setminus A_i$, and that there exist $k\in \Ical$ and $\xi_k\in A$ with the property that $\xi_k\in A_k\setminus \bigcup_{j\in \Ical, j\neq k} A_j$. 
Then, $(\xi_k, \zeta_k)\notin A_k\times A_k$ as well as $(\xi_k, \zeta_k)\notin A_j\times A_j$ for all $j\in \Ical\setminus\{k\}$, and consequently, $(\xi_k,\zeta_k)\notin E$. This contradicts $(\xi_k, \zeta_k)\in A\times A\subset E$.\smallskip

\color{black}
$(ii)$ Let $(\xi, \zeta)\in M_{\Ical'}\times M_{\Ical'}$. Then, there are $i, j\in \Ical'$ and $k, l\in \Ical'$ with $i\neq j$ and $k\neq l$, such that $\xi\in A_i\cap A_j$ and $\zeta\in A_k\cap A_l$. By~\eqref{assij}, we find $s\in \Ical$ such that $(\xi, \zeta)\in A_s\times A_s\subset E$. 
\end{proof}

Finally, we collect some useful consequences of the previous lemma, 
 in particular, two simple cases where the existence of hidden Cartesian squares can be ruled out and a result for unions of three Cartesian squares. Note that for unions of four (or more) Cartesian squares, a variety of scenarios can occur, as illustrated in~Figure~\ref{fig4sets}. 
  
\color{black}
\begin{conclusion}\label{rem:hidden-squares}
Let $E=\bigcup_{i\in \Ical} A_i\times A_i$ with an index set $\Ical$ and nonempty sets $A_i\subset \R^m$ for $i\in \Ical$.  
Suppose that the sets $A_i$ with $i\in \Ical$ are as large as possible, in the sense that 
\begin{align}\label{ass12}
\bigcup_{i\in \Ical'}{A_i}\times \bigcup_{i\in \Ical'} A_i \not\subset E
\end{align}
for any index set $\Ical'\subset \Ical$ with cardinality greater than one. 
In particular,~\eqref{ass12} implies that $A_i\not\subset A_j$ for any $i, j\in \Ical$ with $i\neq j$. 
 The following statements are immediate consequences of~Lemma~\ref{prop:hidden-squares}. 	
\smallskip
	
	a) In case the sets $A_i$ with $i\in \Ical$ are pairwise disjoint, one has $M_\Ical=\emptyset$, and thus, $\Pcal_E = \bigcup_{i\in \Ical}\{ A_i\times A_i\}$ in view of Lemma~\ref{prop:hidden-squares}\,$(i)$ and Remark~\ref{rem:hidden}\,c).	\smallskip

	b) If $\Ical=\{1,2\}$, then $M_\Ical = A_1\cap A_2$ and we obtain $\mathcal P_{E}=\{A_1\times A_1, A_2\times A_2\}$, 
	i.e., $E$ does not have any hidden maximal Cartesian squares. \smallskip

	c) The assumption~\eqref{assij} is automatically fulfilled whenever $\# \Ical'\leq 3$. Therefore, if $\Ical=\{1,2,3\}$, the combination of Lemma~\ref{prop:hidden-squares}\,$(ii)$ and $(i)$ implies
	\begin{align*}
\Pcal_E\subset
\bigcup_{i=1}^3 \{A_i\times A_i\} \cup \{M_\Ical \times M_\Ical\}.
 \end{align*}
This shows that $E$ can have at most one hidden maximal Cartesian square. For an example with $\#\Pcal_E =4$, we refer back to Example~\ref{ex:hidden} and Figure~\ref{fig1a}.\smallskip
\color{red}
\end{conclusion}

 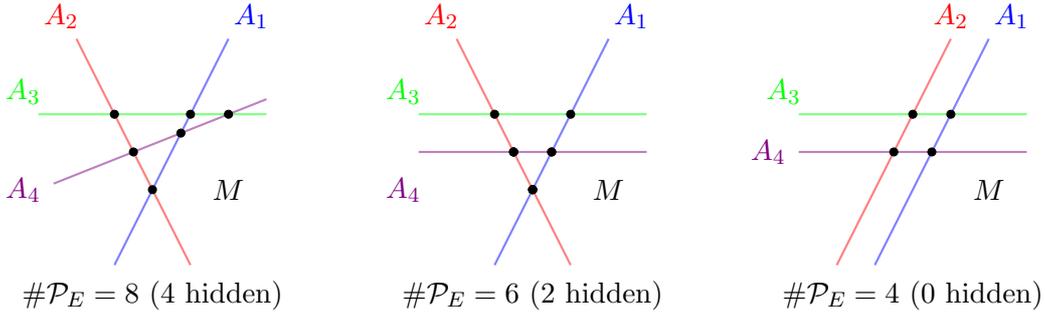
\begin{figure}[h]
 	\begin{tikzpicture}
 	\begin{scope}[xshift=-5cm]
 	\draw[color=blue!50!white, thick](1,2) -- (-0.5,-1); 
 	\draw[color=red!50!white, thick](-1,2) -- (0.5, -1); 
 	\draw[color=green!50!white, thick](1.5,1) -- (-1.5,1); 
 	\draw[color=violet!50!white, thick](-1.3,0.08) -- (1.5,1.2); 
 	
 	\filldraw [black] (0.5,1) circle (1.5pt);
 	\filldraw [black] (-0.5, 1) circle (1.5pt);
 	\filldraw [black] (0,0) circle (1.5pt);
 	\filldraw [black] (1,1) circle (1.5pt); 
	\filldraw [black] (0.374, 0.75) circle (1.5pt); 
	\filldraw [black] (-0.25, 0.5) circle (1.5pt); 

 	\draw [black] (1.3,2.3) node {\color{blue} $A_1$};
 	\draw [black] (-1.2,2.3) node {\color{red} $A_2$};
 	\draw [black] (-1.7, 1.3) node {\color{green} $A_3$};
 	\draw [black] (-1.7, 0) node {\color{violet} $A_4$};
 	
 	\draw [black] (1, 0) node {\color{black} $M$};
 	\draw [black] (0,-1.4) node {$\#\Pcal_E =8$ (4 hidden)};
 	\end{scope}

 	\begin{scope}[xshift=0cm]
 	\draw[color=blue!50!white, thick](1,2) -- (-0.5,-1); 
 	\draw[color=red!50!white, thick](-1,2) -- (0.5, -1); 
 	\draw[color=green!50!white, thick](1.5,1) -- (-1.5,1); 
 	\draw[color=violet!50!white, thick](-1.5,0.5) -- (1.5, 0.5); 

 	\filldraw [black] (0.5,1) circle (1.5pt);
 	\filldraw [black] (-0.5, 1) circle (1.5pt);
 	\filldraw [black] (0,0) circle (1.5pt);
 	\filldraw [black] (-0.25,0.5) circle (1.5pt);

 	\filldraw [black] (0.25,0.5) circle (1.5pt);
 	\filldraw [black] (-0.25, 0.5) circle (1.5pt);
 	\filldraw [black] (0,0) circle (1.5pt);
 	
 	\filldraw [black] (0,0) circle (1.5pt);
 	\filldraw [black] (-0.25,0.5) circle (1.5pt);
 	\filldraw [black] (0.25, 0.5) circle (1.5pt);
 		\draw [black] (1.3,2.3) node {\color{blue} $A_1$};
 	\draw [black] (-1.2,2.3) node {\color{red} $A_2$};
 	\draw [black] (-1.7, 1.3) node {\color{green} $A_3$};
 	\draw [black] (-1.7, 0) node {\color{violet} $A_4$};
 	
 	\draw [black] (1, 0) node {\color{black} $M$};
 	\draw [black] (0,-1.4) node {$\#\Pcal_E =6$ (2 hidden)};
 	\end{scope}

 	\begin{scope}[xshift=5cm]
 	\draw[color=blue!50!white, thick](-0.5,-1) -- (1,2); 
 	\draw[color=red!50!white, thick](-1,-1) -- (0.5, 2); 
 	\draw[color=green!50!white, thick](1.5,1) -- (-1.5,1); 
 	\draw[color=violet!50!white, thick](-1.5,0.5) -- (1.5, 0.5); 
 
	\filldraw [black] (0.5,1) circle (1.5pt);
	\filldraw [black] (0, 1) circle (1.5pt);
	\filldraw [black] (-0.25,0.5) circle (1.5pt);
 	\filldraw [black] (0.25, 0.5) circle (1.5pt);

 	\draw [black] (1.3,2.3) node {\color{blue} $A_1$};
 	\draw [black] (0.5,2.3) node {\color{red} $A_2$};
 	\draw [black] (-1.7, 1.3) node {\color{green} $A_3$};
 	\draw [black] (-1.9, 0.5) node {\color{violet} $A_4$};
 	 \draw [black] (1, 0) node {\color{black} $M$};
 	
 	\draw [black] (0,-1.4) node {$\#\Pcal_E =4$ (0 hidden)};
 	\end{scope}
 	\end{tikzpicture}
 	\caption{Number of maximal Cartesian squares for three examples of $E=\bigcup_{i=1}^4A_i\times A_i$ with $A_1, A_2, A_3, A_4\subset \R^2$ and the corresponding $M=M_{\{1,2,3,4\}}$.
 }\label{fig4sets}
 \end{figure}

\color{black}
\section{Cartesian convexity}\label{sec:wsc-sets}

This section revolves around our new notion of convexity for sets and functions, called Cartesian (level) convexity. As pointed out in the introduction, it plays a crucial role in characterizing the lower semicontinuity of nonlocal supremal and their relaxations. 
\color{black}
\subsection{Cartesian convexity of sets}\label{subsec:Cconvexsets}
Recall that a set $E\subset \R^m\times \R^m$ is separately convex (with vector components) if
\begin{align*}
[\xi_1, \xi_2]\times [\zeta_1, \zeta_2]\subset E \qquad \text{for all $(\xi_1,\zeta_1), (\xi_2, \zeta_2)\in E$ such that $\xi_1=\xi_2$ or $\zeta_1=\zeta_2$.} 
\end{align*}
The separately convex hull of $E$ is denoted by $E^{\rm sc}$.

With the motivation to introduce a suitable (in general) weaker concept of convexity for subsets of $\R^m\times \R^m$, we give the following definition. 
\begin{definition}[Cartesian convexity of sets]\label{def:cartesianseparateconvexity}
We say that $E\subset \R^m\times \R^m$ is Cartesian convex if 
\begin{align*}
[\xi_1, \xi_2]\times [\zeta_1, \zeta_2]\subset E 
\end{align*}
 for  all $(\xi_1,\zeta_1), (\xi_2, \zeta_2)\in E$ such that
$\{\xi_1, \xi_2, \zeta_1, \zeta_2\}\times \{\xi_1, \xi_2, \zeta_1, \zeta_2\}\subset E$ and $\xi_1=\xi_2$ or $\zeta_1=\zeta_2$.  
\end{definition}

The next lemma states useful alternative characterizations of Cartesian convexity.
\begin{lemma}\label{lem:alternative}
For any $E\subset \R^m\times \R^m$,
the following conditions are equivalent: 
\begin{itemize}
\item[$(i)$] $E$ is Cartesian convex; \\[-0.3cm]
\item[$(ii)$] $(A\times A)^{\rm co} =A^{\rm co}\times A^{\rm co}\subset E$ for all $A\subset \R^m$ with $A\times A\subset E$;\\[-0.3cm] 
\item[$(iii)$] any maximal Cartesian square of $E$ is convex.
\end{itemize}
If $E$ is symmetric and diagonal, then $(i)$-$(iii)$ are also equivalent to 
\begin{itemize}
\item[$(iv)$]
$E = \bigcup_{A\times A\subset E} A^{\rm co}\times A^{\rm co} 
= \bigcup_{B\times B\in \Pcal_E} B^{\rm co}\times B^{\rm co}$. 
\end{itemize}
\end{lemma}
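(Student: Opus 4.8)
The plan is to prove the chain of equivalences $(i)\Leftrightarrow(ii)\Leftrightarrow(iii)$ for arbitrary $E$, and then add $(iv)$ for symmetric diagonal $E$.

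First I would show $(i)\Rightarrow(ii)$. Suppose $E$ is Cartesian convex and let $A\subset\R^m$ with $A\times A\subset E$. The identity $(A\times A)^{\rm co}=A^{\rm co}\times A^{\rm co}$ is a general fact about products of convex hulls, so only the inclusion $A^{\rm co}\times A^{\rm co}\subset E$ needs proof. I would argue by building up the convex hull one segment at a time: given $(\xi,\zeta)\in A^{\rm co}\times A^{\rm co}$, express $\xi$ and $\zeta$ using Carath\'eodory as convex combinations of finitely many points of $A$. Using the ``$\xi_1=\xi_2$ or $\zeta_1=\zeta_2$'' clause in Definition~\ref{def:cartesianseparateconvexity} repeatedly — first convexifying in the first component along segments whose second endpoint coordinates agree, then in the second component — one stays inside $E$ at every stage because the relevant finite point set is contained in $A$, hence its square is in $A\times A\subset E$, so the diagonality hypothesis of the definition is met. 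A clean way to organize this is induction on the number of points needed to write $\xi$ and $\zeta$ as convex combinations. The converse $(ii)\Rightarrow(i)$ is essentially immediate: given $(\xi_1,\zeta_1),(\xi_2,\zeta_2)\in E$ with $A:=\{\xi_1,\xi_2,\zeta_1,\zeta_2\}$ satisfying $A\times A\subset E$ and (say) $\zeta_1=\zeta_2$, apply $(ii)$ to get $A^{\rm co}\times A^{\rm co}\subset E$, and note $[\xi_1,\xi_2]\times[\zeta_1,\zeta_2]=[\xi_1,\xi_2]\times\{\zeta_1\}\subset A^{\rm co}\times A^{\rm co}$.

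Next, $(ii)\Leftrightarrow(iii)$. For $(ii)\Rightarrow(iii)$: if $B\times B\in\Pcal_E$, then $B\times B\subset E$, so by $(ii)$ $B^{\rm co}\times B^{\rm co}\subset E$, i.e.\ $B^{\rm co}\times B^{\rm co}$ is a Cartesian square of $E$ containing $B\times B$; maximality of $B\times B$ forces $B^{\rm co}=B$, so $B$ is convex. For $(iii)\Rightarrow(ii)$: given $A\times A\subset E$, Zorn's Lemma (as noted after the definition of $\Pcal_E$) gives a maximal Cartesian square $B\times B\in\Pcal_E$ with $A\subset B$; by $(iii)$, $B$ is convex, so $A^{\rm co}\subset B^{\rm co}=B$, whence $A^{\rm co}\times A^{\rm co}\subset B\times B\subset E$.

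Finally, for symmetric and diagonal $E$ I would add $(iv)$. The two unions in $(iv)$ agree by~\eqref{symdiag} applied after taking convex hulls (every maximal Cartesian square contains squares $A\times A$, and every $A\times A$ sits inside a maximal one, so the families $\{A^{\rm co}\times A^{\rm co}: A\times A\subset E\}$ and $\{B^{\rm co}\times B^{\rm co}: B\times B\in\Pcal_E\}$ have the same union). For $(iii)\Rightarrow(iv)$: by~\eqref{symdiag}, $E=\bigcup_{B\times B\in\Pcal_E}B\times B=\bigcup_{B\times B\in\Pcal_E}B^{\rm co}\times B^{\rm co}$ using $(iii)$ to replace each $B$ by $B^{\rm co}=B$; the inclusion $\supseteq$ needs that each $B^{\rm co}\times B^{\rm co}\subset E$, which is again $(iii)$ (it says $B=B^{\rm co}$). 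For $(iv)\Rightarrow(ii)$: if $A\times A\subset E$ and $(iv)$ holds, I still need $A^{\rm co}\times A^{\rm co}\subset E$; here I would use that $A\times A$ extends to a maximal $B\times B\in\Pcal_E$ with $A\subset B$, and $(iv)$ gives $B^{\rm co}\times B^{\rm co}\subset E$, hence $A^{\rm co}\times A^{\rm co}\subset B^{\rm co}\times B^{\rm co}\subset E$.

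I expect the main obstacle to be the careful bookkeeping in $(i)\Rightarrow(ii)$: one must verify that each intermediate point produced while convexifying lies in a Cartesian square of $E$ whose spanning set is contained in $A$ (so that the diagonality precondition of Definition~\ref{def:cartesianseparateconvexity} is automatically satisfied), and that alternating single-component convexifications indeed exhausts all of $A^{\rm co}\times A^{\rm co}$. Setting up the induction on the number of vertices cleanly is the delicate part; everything else is routine.
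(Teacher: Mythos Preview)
Your handling of $(ii)\Rightarrow(i)$, $(ii)\Leftrightarrow(iii)$, and of $(iv)$ is correct and essentially coincides with the paper's. The substantive difference is in $(i)\Rightarrow(ii)$: you propose a direct Carath\'eodory induction, whereas the paper passes to a maximal Cartesian square $B\times B\in\Pcal_E$ with $A\subset B$ and proves $B$ is convex by showing, for any $\xi_1,\xi_2\in B$, that $(B\cup[\xi_1,\xi_2])\times(B\cup[\xi_1,\xi_2])\subset E$ and then invoking maximality of $B$. This route avoids any induction on the number of vertices and delivers $(iii)$ at the same time.

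Your inductive plan has a genuine gap, and it is not merely bookkeeping. The assertion that at each step ``the spanning set is contained in $A$'' fails already at the second step: once you produce an intermediate point $\alpha\in[a_1,a_2]\setminus A$ with $a_1,a_2\in A$, the next application of Definition~\ref{def:cartesianseparateconvexity} (say with one coordinate fixed at $\alpha$) requires $\{\alpha,\zeta_1,\zeta_2\}\times\{\alpha,\zeta_1,\zeta_2\}\subset E$, in particular $(\alpha,\alpha)\in E$, and this is \emph{not} implied by $A\times A\subset E$. The obstruction is intrinsic for arbitrary $E$: with $m=1$ and $E=[0,1]^2\setminus\{(\tfrac12,\tfrac12)\}$, one checks that $E$ is Cartesian convex (any instance of the definition with $\zeta_1=\zeta_2=\tfrac12$ would force $(\tfrac12,\tfrac12)$ into the hypothesis and is therefore vacuous), yet $\{0,1\}\times\{0,1\}\subset E$ while $[0,1]\times[0,1]\not\subset E$, so $(i)\not\Rightarrow(ii)$ in general. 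The paper's step ``$[\xi_1,\xi_2]\times[\xi_1,\xi_2]\subset E$'' runs into the same missing diagonal point $(\alpha,\alpha)$. Both arguments are rescued as soon as $E$ is assumed diagonal (as it is in every application in the paper, where $E$ is a sublevel set of $\widehat W$): once a previous step has produced some $(\alpha,\beta)\in E$, diagonality yields $(\alpha,\alpha)\in E$ automatically, and the precondition of Definition~\ref{def:cartesianseparateconvexity} is met. Under that extra hypothesis your induction can indeed be carried out, but the paper's maximality argument is shorter and does not require tracking Carath\'eodory vertices.
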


\begin{proof} 
As for $(i)\Rightarrow (ii)$, let $A\times A\subset E$ and consider a maximal Cartesian square $B\times B\in \Pcal_E$ with 
 $A\subset B$. 
We claim that $B$ is convex. To show this, take $\xi_1, \xi_2\in B$ and observe that $(i)$ implies 
$([\xi_1,\xi_2]\times \{\alpha\})\cup (\{\beta\}\times [\xi_1,\xi_2])\subset E$ for any $\alpha, \beta\in B$, or in other words,
    	\begin{align}\label{774}
 ([\xi_1,\xi_2]\times B)\cup (B\times [\xi_1,\xi_2])\subset E.
    	\end{align}
   On the other hand, one can deduce from the Cartesian convexity of $E$ that
    	\begin{align*}
	[\xi_1,\xi_2]\times[\xi_1,\xi_2] \subset E;
    	\end{align*}
 indeed, for $\alpha\in [\xi_1,\xi_2]$ it follows from~\eqref{774} and $(i)$ that $([\xi_1,\xi_2]\times \{\alpha\})\cup (\{\alpha\}\times [\xi_1,\xi_2])\subset E$. 

\color{black} Summing up, we have that
    	$$
    	([\xi_1,\xi_2]\cup B)\times 	([\xi_1,\xi_2]\cup B) \subset E.
    	$$
	\color{black}
	Due to the maximality property of $B\times B$, this yields $[\xi_1, \xi_2]\subset B$, proving the convexity of $B$.  Consequently, $A^{\rm co}\subset B$ and therefore, $(A\times A)^{\rm co} = A^{\rm co}\times A^{\rm co}\subset B\times B\subset E$, as stated in~$(ii)$.\smallskip

The remaining implications are straightforward to verify. Indeed, $(ii)\Rightarrow (i)$ is a direct consequence of Definition~\ref{def:cartesianseparateconvexity}, the equivalence $(ii)\Leftrightarrow (iii)$ is clear considering the definition of maximal Cartesian squares, and $(ii)\Leftrightarrow (iv)$ holds in light of~\eqref{symdiag}.
\end{proof}

\color{black}
\begin{remark}[Comparison with separate convexity]\label{rem:comparison} Let $E\subset \R^m\times \R^m$ be symmetric and diagonal. It is evident that separate convexity is sufficient for Cartesian convexity. Whether it is also necessary depends on the dimension $m$. 
\smallskip

	a) In the scalar setting, where $m=1$, the set $E$ is Cartesian convex if and only if it is separately convex. Indeed, if $E\subset\R\times \R$ is Cartesian convex, we invoke~\cite[Lemma~4.7]{KrZ19} to find that the separately convex hull of $E$ can be expressed as the union of all symmetric squares with corners in the points of $E$, formally,
	\begin{equation*}
	E^{\rm sc} = \bigcup_{(\alpha, \beta)\in E} [\alpha, \beta]\times [\alpha, \beta]. 
	\end{equation*}
	Then, along with Lemma~\ref{lem:alternative}\,$(iv)$,
	\begin{align*}
	E^{\rm sc}= \bigcup_{\{\alpha, \beta\}\times \{\alpha, \beta\}\subset E} \{\alpha, \beta\}^{\rm co}\times \{\alpha, \beta\}^{\rm co}\subset \bigcup_{A\times A\subset E} A^{\rm co}\times A^{\rm co}=E,
	\end{align*} 
	which shows the separate convexity of $E$. 

	\smallskip
	
 b) If $m>1$, separate convexity is strictly stronger than Cartesian convexity, as this example illustrates. Consider the symmetric and diagonal set
	\begin{align*}
	E=(A_1\times A_1)\cup (A_2\times A_2),
	\end{align*}
	where $A_1, A_2\subset \R^m$ are two convex sets with nonempty intersection such that
	$(A_1\cup A_2)^{\rm co}\setminus (A_1\cup A_2)\neq \emptyset$. %
	Since $\mathcal P_E=\{A_1\times A_1, A_2\times A_2\}$ according to~Conclusion~\ref{rem:hidden-squares}\,b), the set $E$ satisfies the condition in~Lemma~\ref{lem:alternative}\,$(iii)$, and is thus Cartesian convex. To see that $E$ is not separately convex, we argue that
	$$
	E^{\rm sc} = E \cup \bigl[(A_1\cap A_2)  \times (A_1 \cup A_2)^{\rm co}\bigr] \cup \bigl[(A_1 \cup A_2)^{\rm co} \times (A_1 \cap A_2)\bigl] \,\neq E, 
	$$
	cf.~\cite[Remark 4.6\,b)]{KrZ19}. Observe that $E^{\rm sc}$ and $E$ do coincide after diagonalization, though, i.e.,~$\widehat{E^{\rm sc}}=E$.
\end{remark} 

\subsection{Cartesian convexification}\label{sec:Cartesianconvexification_set}
 If $E\subset \R^m\times \R^m$ is not Cartesian convex, 
 one may consider the smallest Cartesian convex set containing $E$, which we call the Cartesian convex hull of $E$ and denote by $E^{\times \rm c}$; \color{black} precisely, 
\begin{align*}
E^{\times \rm c} = \bigcap_{F\in \Fcal_E} F \qquad \text{with $\Fcal_E=\{F\subset \R^m\times \R^m: E\subset F, \text{$F$ is Cartesian convex}\}$.} 
\end{align*}
Note that \color{black} $E^{\times \rm c}$ is well-defined by the simple observation that intersections (even uncountable) of Cartesian convex sets are again Cartesian convex. 
If $E$ is symmetric and diagonal, so is $E^{\times \rm c}$.

In the spirit of \cite[Theorem 7.17]{Dac08} on the representation of separately convex hulls,
 we prove the following analogue 
for Cartesian convex hulls of symmetric and diagonal sets.
\begin{lemma}[Representation of \boldmath{$E^{\times \rm c}$}]\label{lem:well-definition-Ewsc}  Let $E\subset \R^m \times \R^m$ be symmetric and diagonal. Then,
		\begin{align*}
		E^{\times \rm c} = \bigcup_{k\in \N} E_k^{\times \rm c}
		\end{align*}
with inductively defined sets $E_0^{\times \rm c}:= E$ and
	\begin{align}\label{orderone}
	 E_k^{\times \rm c} := \bigcup_{A\times A\subset E_{k-1}^{\times \rm c}} A^{\rm co}\times A^{\rm co} = \bigcup_{B\times B\in \mathcal \Pcal_{E_{k-1}^{\times \rm c}}} B^{\rm co}\times B^{\rm co}\quad \text{ for $k\in \N$.}
	\end{align}
	 \end{lemma}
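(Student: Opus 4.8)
The claim is that $E^{\times \mathrm c}$ is obtained by iterating, countably many times, the ``one-step'' operation $F\mapsto \bigcup_{A\times A\subset F}A^{\mathrm{co}}\times A^{\mathrm{co}}$. The strategy is the familiar one for this type of hull-representation (as in \cite[Theorem~7.17]{Dac08}): verify that the candidate set $\widetilde E := \bigcup_{k\in\N}E_k^{\times\mathrm c}$ is itself symmetric, diagonal, and Cartesian convex, contains $E$, and is contained in every Cartesian convex set $F\in\Fcal_E$ containing $E$. Minimality then forces $\widetilde E = E^{\times\mathrm c}$.

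The plan is to proceed in the following steps. First I would record that the sets $E_k^{\times\mathrm c}$ are increasing: since $E$ is symmetric and diagonal, so is $E_0^{\times\mathrm c}=E$, and by \eqref{symdiag} we have $E = \bigcup_{A\times A\subset E}A\times A\subset \bigcup_{A\times A\subset E}A^{\mathrm{co}}\times A^{\mathrm{co}} = E_1^{\times\mathrm c}$; inductively $E_{k-1}^{\times\mathrm c}\subset E_k^{\times\mathrm c}$, and each $E_k^{\times\mathrm c}$ is again symmetric and diagonal (a union of Cartesian squares is automatically so), hence the second equality in \eqref{orderone} is justified by \eqref{symdiag} applied to $E_{k-1}^{\times\mathrm c}$. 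Consequently $\widetilde E$ is symmetric, diagonal, and contains $E=E_0^{\times\mathrm c}$. Second, I would show $\widetilde E$ is Cartesian convex using the characterization in Lemma~\ref{lem:alternative}\,$(ii)$: if $A\times A\subset\widetilde E$, then I need $A^{\mathrm{co}}\times A^{\mathrm{co}}\subset\widetilde E$. The subtlety here is that $A$ need not be a subset of any single $E_k^{\times\mathrm c}$ a priori — but actually it is, and this is the one point requiring a small argument (see below); granting $A\subset E_k^{\times\mathrm c}$ for some $k$, the definition \eqref{orderone} gives $A^{\mathrm{co}}\times A^{\mathrm{co}}\subset E_{k+1}^{\times\mathrm c}\subset\widetilde E$. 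Third, for minimality, let $F\in\Fcal_E$. By induction on $k$ I would show $E_k^{\times\mathrm c}\subset F$: the base case $k=0$ is $E\subset F$; for the step, if $E_{k-1}^{\times\mathrm c}\subset F$ and $A\times A\subset E_{k-1}^{\times\mathrm c}\subset F$, then Cartesian convexity of $F$ together with Lemma~\ref{lem:alternative}\,$(ii)$ gives $A^{\mathrm{co}}\times A^{\mathrm{co}}\subset F$, so $E_k^{\times\mathrm c}\subset F$; taking the union over $k$ yields $\widetilde E\subset F$, hence $\widetilde E\subset E^{\times\mathrm c}$. Combined with $\widetilde E\supset E^{\times\mathrm c}$ (which follows since $\widetilde E$ is Cartesian convex and contains $E$), this finishes the proof.

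The step I expect to be the main obstacle is establishing, in the Cartesian-convexity verification, that any Cartesian square $A\times A\subset\widetilde E$ is contained in a single stratum $E_k^{\times\mathrm c}$. A clean way around this is to avoid the claim altogether and instead argue directly: rather than invoking Lemma~\ref{lem:alternative}\,$(ii)$, work from Definition~\ref{def:cartesianseparateconvexity}. Take $(\xi_1,\zeta_1),(\xi_2,\zeta_2)\in\widetilde E$ with $\{\xi_1,\xi_2,\zeta_1,\zeta_2\}\times\{\xi_1,\xi_2,\zeta_1,\zeta_2\}\subset\widetilde E$ and, say, $\zeta_1=\zeta_2=:\zeta$. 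Since the $E_k^{\times\mathrm c}$ are increasing and there are only finitely many pairs among $\xi_1,\xi_2,\zeta$, there is a single $k$ with $A\times A\subset E_k^{\times\mathrm c}$ where $A:=\{\xi_1,\xi_2,\zeta\}$; then $A^{\mathrm{co}}\times A^{\mathrm{co}}\subset E_{k+1}^{\times\mathrm c}$, and in particular $[\xi_1,\xi_2]\times[\xi_1,\xi_2]\subset\widetilde E$, which contains $[\xi_1,\xi_2]\times[\zeta_1,\zeta_2]=[\xi_1,\xi_2]\times\{\zeta\}$. This is exactly the "finitely many generators" observation that makes the directed-union argument go through, and it is the only place where one uses that the iteration is indexed by $\N$ (rather than needing a transfinite iteration), so I would present it carefully.
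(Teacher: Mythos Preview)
Your proposal is correct and follows essentially the same argument as the paper: show $\widetilde E\subset E^{\times\mathrm c}$ by induction on $k$ via Lemma~\ref{lem:alternative}\,$(ii)$, and show $\widetilde E$ is Cartesian convex by using Definition~\ref{def:cartesianseparateconvexity} with the finite set $A=\{\xi_1,\xi_2,\zeta\}$ landing in a single stratum $E_k^{\times\mathrm c}$. One small slip to fix in your write-up: the set that contains $[\xi_1,\xi_2]\times\{\zeta\}$ is $A^{\mathrm{co}}\times A^{\mathrm{co}}$, not $[\xi_1,\xi_2]\times[\xi_1,\xi_2]$ (the latter need not contain $\{\zeta\}$ in the second factor).
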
	

\begin{proof} The argument is quite standard. Applying Lemma~\ref{lem:alternative}\,$(ii)$ iteratively 
shows that $E^{\times \rm c}$ contains all the nested sets $E_k^{\times \rm c}$ for $k\in \N$. 
The reverse inclusion follows from the Cartesian convexity of $\bigcup_{k\in \N} E_k^{\times \rm c}=:\widetilde E$. Indeed, given $(\xi_1,\zeta)$, $(\xi_2,\zeta)\in \widetilde{E}$ with $\{\xi_1,\xi_2,\zeta\}\times \{\xi_1,\xi_2,\zeta\}\subset \widetilde{E}$,
one can find $k\in \N$ such that
$\{\xi_1, \xi_2, \zeta\}\times \{\xi_1, \xi_2, \zeta\}\subset E_k^{\times \rm c}$, and hence, $\{\xi_1, \xi_2, \zeta\}^{\rm co}\times \{\xi_1, \xi_2, \zeta\}^{\rm co}\subset E_{k+1}^{\times \rm c}\subset \widetilde E$. In particular, $[\xi_1, \xi_2]\times\{\zeta\}\subset \widetilde E$, which concludes the proof. 
\end{proof}

Next, we determine explicitly the Cartesian convex hulls for two examples. The second one shows in particular that $E^{\times \rm c}_1$ is not Cartesian convex in general. 
\begin{example}\label{ex3}  
Let $E=\bigcup_{i=1}^3A_i\times A_i$ with $A_i\subset \R^m$ for $i=1, 2,3$. 
\smallskip

	a) Suppose that $A_i$ are convex and  $A_i\cap A_j\neq \emptyset$ for $i\neq j$. 
With $M= \bigcup_{i, j=1,2,3, i\neq j} A_i\cap A_j$, we obtain  
 $\Pcal_{E}\subset\bigcup_{i=1}^3\{A_i\times A_i\}\cup \{M\times M\}$ by Conclusion~\ref{rem:hidden-squares}\,c), and thus,
	\begin{align}\label{E1timesc}
	E_1^{\times \rm c} = \bigcup_{i=1}^3(A_i^{\rm co} \times A_i^{\rm co})\cup (M^{\rm co}\times M^{\rm co}).
	\end{align}
Lemma~\ref{prop:hidden-squares}\,$(i)$ then 
 shows that $E_1^{\times \rm c}$ as represented in~\eqref{E1timesc} does not have any hidden squares, so that
\begin{align*}
\Pcal_{E_1^{ \times \rm c}}\subset \bigcup_{i=1}^3\{A_i^{\rm co}\times A_i^{\rm co}\}\cup \{M^{\rm co}\times M^{\rm co}\} = \{B^{\rm co}\times B^{\rm co}: B\times B\in \Pcal_{E}\}.
\end{align*}	
In view of Lemma~\ref{lem:well-definition-Ewsc}, this implies $E^{\times \rm c}=E_1^{\times \rm c}$.\smallskip

b) Suppose now that $A_i$ are pairwise disjoint such that the convex hulls $A_i^{\rm co}$ have pairwise nonempty intersection
and satisfy  
$M\setminus A_i^{\rm co} \neq \emptyset$ for $i=1,2,3$ with $M= \bigcup_{i, j=1,2,3, i\neq j} A_i^{\rm co}\cap A_j^{\rm co}$. 

Then, $\Pcal_E =\bigcup_{i=1}^3 \{A_i\times A_i\}$ by Conclusion~\ref{rem:hidden-squares}\,a) gives
\begin{align*}
E_1^{\times \rm c}=\bigcup_{i=1}^3  A_i^{\rm co}\times A_i^{\rm co},
\end{align*}
and we infer from  Conclusion~\ref{rem:hidden-squares}\,c)
 that $E_1^{\times \rm c}$ contains exactly four maximal Cartesian squares, namely,
$\Pcal_{E_1^{\times \rm c}}=\bigcup_{i=1}^3\{A_i^{\rm co}\times A_i^{\rm co}\}\cup \{M\times M\}$. 
Hence,
\begin{align*}
E_2^{\times \rm c}=E_1^{\times \rm c} \cup (M^{\rm co}\times M^{\rm co}).
\end{align*}
Since $E^{\times \rm c}\supset E_2^{\times \rm c}\supsetneq E_1^{\times \rm c}$, we conclude that $E_1^{\times \rm c}$ cannot be Cartesian convex.
In fact, it holds that $E^{\times \rm c}=E^{\times \rm c}_2$, which we can see by applying a) to the convexifications of the sets $A_i$, see Figure~\ref{fig1}.\color{black}
\end{example}

		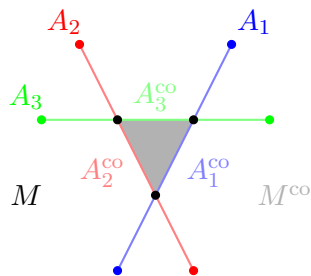
\begin{figure}[h]
		\begin{tikzpicture}
		\begin{scope}[xshift=-3cm]
		\filldraw[black!30!white] (-0.5,1)--(0,0)-- (0.5,1)-- (-0.5, 1);
		\draw[color=blue!50!white, thick](1,2) -- (-0.5,-1);
		\draw[color=red!50!white, thick](-1,2) -- (0.5, -1);
		\draw[color=green!50!white, thick](1.5,1) -- (-1.5,1);
		
		\filldraw [blue] (1,2) circle (1.5pt);
		\filldraw [blue] (-0.5,-1) circle (1.5pt);
		\filldraw [red] (-1,2) circle (1.5pt);
		\filldraw [red] (0.5,-1) circle (1.5pt);
		\filldraw [green] (1.5,1) circle (1.5pt);
		\filldraw [green] (-1.5,1) circle (1.5pt);
		
		\filldraw [black] (0.5,1) circle (1.5pt);
		\filldraw [black] (-0.5, 1) circle (1.5pt);
		\filldraw [black] (0,0) circle (1.5pt);
		
		\draw [black] (0.7,0.3) node {\color{blue!50!white} $A_1^{\rm co}$};
		\draw [black] (-0.7,0.3) node {\color{red!50!white} $A_2^{\rm co}$};
		\draw [black] (0, 1.3) node {\color{green!50!white} $A_3^{\rm co}$};
		\draw [black] (1.3,2.3) node {\color{blue} $A_1$};
		\draw [black] (-1.2,2.3) node {\color{red} $A_2$};
		\draw [black] (-1.7, 1.3) node {\color{green} $A_3$};
		
		\draw [black] (-1.7, 0) node {\color{black} $M$};
		\draw [black] (1.7, 0) node {\color{black!30!white} $M^{\rm co}$};
		\end{scope}	
		
		\end{tikzpicture}
		\caption{Illustration of Example~\ref{ex3}\,b) for $m=2$. 
		}
		\label{fig1}
	\end{figure}

It is well-known that the convex hull of a compact set is again compact. However, this is no longer true for the separately convex hull, see \cite[Example~2.2]{Aumann-Hart} and \cite[Example~2.1]{Kolar} (or \cite[Remark 8.7]{Dac08}).
Whether Cartesian convexification preserves compactness is currently an open question. 
Nevertheless, the following result holds. 
 
\color{black}
\begin{lemma}
\label{lem:compactness}
	Let $K\subset \R^m\times \R^m$ be symmetric, diagonal, and compact. Then, $K_k^{\times \rm c}$ is compact for any $k\in \N$.
\end{lemma}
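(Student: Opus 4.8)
The claim is that $K_k^{\times\rm c}$ is compact for every $k\in\N$, where by Lemma~\ref{lem:well-definition-Ewsc}
$$
K_k^{\times\rm c} = \bigcup_{A\times A\subset K_{k-1}^{\times\rm c}} A^{\rm co}\times A^{\rm co}.
$$
By induction it suffices to prove a single step: if $E\subset\R^m\times\R^m$ is symmetric, diagonal, and compact, then $E_1^{\times\rm c}=\bigcup_{A\times A\subset E}A^{\rm co}\times A^{\rm co}$ is compact. Since all sets under discussion are bounded (they are contained in the bounded set $K^{\rm co}\times K^{\rm co}$, which stays fixed throughout the induction because convexifying squares inside $K^{\rm co}\times K^{\rm co}$ never leaves it), the real task is \emph{closedness}.

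First I would reduce from arbitrary squares to \emph{maximal} Cartesian squares: by~\eqref{symdiag} and Zorn's Lemma, $E_1^{\times\rm c}=\bigcup_{B\times B\in\Pcal_E}B^{\rm co}\times B^{\rm co}$, and in fact it is enough to take the closed sets $\overline{B}$ for $B$ ranging over maximal Cartesian squares, since $B^{\rm co}\times B^{\rm co}\subset\overline{B}^{\rm co}\times\overline{B}^{\rm co}$ and $\overline B\times\overline B\subset E$ by compactness of $E$ (the closure of a Cartesian square contained in a closed set is again a Cartesian square in that set), so WLOG every maximal Cartesian square is closed, hence compact, hence has compact convex hull. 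Now take a sequence $(\xi_j,\zeta_j)\in E_1^{\times\rm c}$ converging to $(\xi,\zeta)$; for each $j$ pick a maximal (closed) Cartesian square $B_j\times B_j$ with $(\xi_j,\zeta_j)\in B_j^{\rm co}\times B_j^{\rm co}$. The sets $B_j$ are nonempty compact subsets of the fixed compact set $\{\eta:(\eta,\eta)\in E\}$ (the ``diagonal trace'' of $E$), so by Blaschke's selection theorem a subsequence of $(B_j)$ converges in Hausdorff distance to some nonempty compact $B$. The key points to check are then: (1) $B\times B\subset E$ — this uses that $E$ is closed and that Hausdorff convergence $B_j\to B$ gives, for any $(\beta,\beta')\in B\times B$, approximating points $\beta_j,\beta'_j\in B_j$ with $(\beta_j,\beta'_j)\in B_j\times B_j\subset E$, whence $(\beta,\beta')\in E$; (2) $(\xi,\zeta)\in B^{\rm co}\times B^{\rm co}$ — this uses the continuity of the convex-hull operation under Hausdorff convergence of compact sets, namely $B_j^{\rm co}\to B^{\rm co}$ in Hausdorff distance (a standard fact), so $\xi_j\to\xi$ with $\xi_j\in B_j^{\rm co}$ forces $\xi\in B^{\rm co}$, and likewise $\zeta\in B^{\rm co}$. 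Since $B\times B\subset E$, the square $B^{\rm co}\times B^{\rm co}$ (or rather $\overline{B^{\rm co}}\times\overline{B^{\rm co}}=B^{\rm co}\times B^{\rm co}$, already closed) is one of the pieces in the union defining $E_1^{\times\rm c}$, so $(\xi,\zeta)\in E_1^{\times\rm c}$. This proves closedness, and boundedness being clear, $E_1^{\times\rm c}$ is compact.

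For the inductive step one applies this with $E=K_{k-1}^{\times\rm c}$, which is compact by the inductive hypothesis, symmetric and diagonal because $K$ is and these properties are preserved (as noted before Lemma~\ref{lem:well-definition-Ewsc}), and one concludes $K_k^{\times\rm c}=E_1^{\times\rm c}$ is compact.

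\textbf{Main obstacle.}
The delicate point is controlling the convex hulls under the limit: passing from $(\xi_j,\zeta_j)\in B_j^{\rm co}\times B_j^{\rm co}$ to $(\xi,\zeta)\in B^{\rm co}\times B^{\rm co}$ requires both a compactness mechanism to extract a limiting set $B$ (Blaschke) and the semicontinuity/continuity of $A\mapsto A^{\rm co}$ along Hausdorff-convergent sequences of compacta — together with the fact that the limiting set $B$ still sits inside $E$ so that $B^{\rm co}\times B^{\rm co}$ is genuinely one of the squares in the defining union. It is precisely here that the \emph{finite-dimensionality} (through Carathéodory, giving $A^{\rm co}=\{\sum_{i=1}^{m+1}t_i a_i\}$ with $a_i\in A$) makes the convex-hull operation Hausdorff-continuous on compacta, which is what fails for separate convexification and explains why the compactness question for the \emph{full} Cartesian convex hull $K^{\times\rm c}=\bigcup_k K_k^{\times\rm c}$ remains open: the union over $k$ is a countable increasing union and need not be closed even though each stage is.
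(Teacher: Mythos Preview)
Your argument is correct and follows essentially the same route as the paper: pick maximal (hence compact) Cartesian squares $B_j$ containing the sequence points, extract a Hausdorff-convergent subsequence via Blaschke's selection theorem, and use Hausdorff continuity of the convex-hull map together with closedness of $E$ to conclude that the limit lies in $B^{\rm co}\times B^{\rm co}\subset E_1^{\times\rm c}$. The only cosmetic difference is that the paper packages Blaschke plus the estimate $d_H(B_j^{\rm co},B^{\rm co})\leq d_H(B_j,B)$ into a separate auxiliary lemma (Lemma~\ref{lem:Hausdorffconverg}) rather than invoking these ingredients directly.
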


\begin{proof} We prove the statement first for $k=1$.
Since $K_1^{\times \rm c}\subset K^{\rm co}$, it is clear that $K_1^{\times \rm c}$ is bounded. 
To see that it is also closed,
let $(\xi_j, \zeta_j)\subset K_1^{\times \rm c}$ for $j\in \N$ and suppose that $(\xi_j, \zeta_j)\to (\xi, \zeta)$ as $j\to \infty$ for some $(\xi, \zeta)\in \R^m\times \R^m$. We aim to show that $(\xi, \zeta)\in K_1^{\times \rm c}$.

In light of~\eqref{orderone}, there exists a sequence of compact 
sets $(B_j)_j\subset \R^m$ such that $B_j\times B_j\in \Pcal_K$ and $(\xi_j, \zeta_j)\in B_j^{\rm co}\times B_j^{\rm co}$ for all $j$.
From Lemma~\ref{lem:Hausdorffconverg}, we can infer the existence of a compact $B\subset \R^m$ with $B\times B\subset K$ and
\begin{align*}
d_H(B_j^{\rm co}\times B_j^{\rm co}, B^{\rm co}\times B^{\rm co})\leq d_H(B_j^{\rm co}, B^{\rm co})\to 0 \quad\text{as $j\to \infty$,}
\end{align*}
if necessary, after passing to a (non-relabeled) subsequence. Hence,
\begin{align*}
0& = \lim_{j\to \infty} d_H(B_j^{\rm co}\times B_j^{\rm co}, B^{\rm co}\times B^{\rm co})\\ 
& \geq \lim_{j\to \infty} \dist((\xi_j, \zeta_j), B^{\rm co}\times B^{\rm co}) = \dist((\xi, \zeta), B^{\rm co}\times B^{\rm co}),
\end{align*}
 which implies $(\xi, \zeta)\in B^{\rm co}\times B^{\rm co}\subset K_1^{\times \rm c}$, recalling that $B\times B$ is a subset of $K$.  \color{black}

The compactness of $K_k^{\times c}$ with $k\in \N$ can be obtained inductively by repeating the above argument. 
\end{proof}

\begin{remark} In~\eqref{convexhull}, we state the well-known characterization of convex hulls of compact sets via barycenters of probability measures. 
	An analogous representation formula holds for 
	$K_1^{\times \rm c}$ with $K\subset \R^m\times \R^m$ symmetric, diagonal, and compact. Precisely, 
\begin{equation*}
K^{\times \rm c}_1 = \{ [\Lambda] \colon \Lambda= \nu\otimes\mu \text{ with } \nu,\mu \in \Pcal r(\R^m) \text{ such that } \supp (\nu\otimes \mu) \subset B\times B \text{ for some } B\times B\in \Pcal_K \}.
\end{equation*}
This is straightforward to verify, given~\eqref{convexhull} and the observation that $B$ is compact for every $B\times B\in \Pcal_K$. 
\end{remark}
\color{black}

Next, we introduce a condition regarding the Cartesian convexification of symmetric and diagonal sets that turns out to be the key to characterizing structure preservation of both nonlocal inclusions under $L^\infty$-weakly$^\ast$ closure and nonlocal supremals under relaxation in the vectorial setting, cf.~Propositions~\ref{prop:structure_inclusions} and~\ref{prop:structure-preservation}. 

\color{black}
\begin{definition}[Basic Cartesian convexification]\label{def:condition(S)}
Let $E\subset\R^m\times \R^m$ be symmetric and diagonal. We say that $E$ has a basic Cartesian convexification, or a basic Cartesian convex hull, if 
\begin{align}\label{basic_convexification}
\Pcal_{E^{\times \rm c}}  \subset  \{B^{\rm co}\times B^{\rm co}: B\times B\in \Pcal_E\}. 
  \end{align}
\end{definition}
Let us make a few basic observations and discuss some useful properties of this notion. 
\begin{remark}\label{rem:NS}
Let $E\subset \R^m\times \R^m$ be symmetric and diagonal.
\smallskip

 a) Any Cartesian convex set $E$ has a basic Cartesian convexification. This is due to
  \begin{align*}
  \Pcal_{E^{\times \rm c}} =\Pcal_E 
 =\{B^{\rm co}\times B^{\rm co}:B\times B\in \Pcal_E\},
 \end{align*} where the last identity exploits the maximality property of the elements of $\Pcal_E$ as well as~Lemma~\ref{lem:alternative}. 

\smallskip

b) Suppose that $E$ has a basic Cartesian convexification, then 
\begin{equation}\label{N}
E^{\times \rm c} = E^{\times \rm c}_1.
\end{equation}
Indeed, we deduce from
~\eqref{symdiag} and~\eqref{basic_convexification} that
$$
E^{\times \rm c}=\bigcup_{A\times A\in \Pcal_{E^{\times \rm c}}} A\times A \subset \bigcup_{B\times B\in \Pcal_E} B^{\rm co}\times B^{\rm co} = E_1^{\times\rm c} \subset E^{\times \rm c}.
$$ 
If $E$ is additionally compact, then $E^{\times \rm c}$ is also compact by~Lemma~\ref{lem:compactness}. 

\smallskip

c) Note that~\eqref{basic_convexification} can be replaced by 
\begin{equation}\label{basic_convexification_1}
\Pcal_{E^{\times \rm c}_1}  \subset  \{B^{\rm co}\times B^{\rm co}: B\times B\in \Pcal_E\}.
\end{equation}
To see this, recall  Lemma~\ref{lem:well-definition-Ewsc} and apply~\eqref{basic_convexification_1} iteratively to obtain $E^{\times \rm c}_1=E^{\times \rm c}_2 =\ldots = E^{\times \rm c}_k$ for every $k\in\N$, and thus, $E^{\times \rm c} = E^{\times \rm c}_1$. 

\smallskip

d) In the scalar setting and for compact sets, the condition of having a basic Cartesian convexification is no restriction.
If $E\subset \R\times \R$ is compact, we know $E^{\times \rm c}= E^{\rm sc}$ by Remark~\ref{rem:comparison}\,a), and therefore,  due to \cite[Lemma~4.7]{KrZ19}, 
\begin{align*}
\Pcal_{E^{\times \rm c}} =\Pcal_{E^{\rm sc}} &\subset \{[\alpha, \beta]\times [\alpha,\beta] \colon (\alpha,\beta)\in E\} \subset \{ B^{\rm co} \times B^{\rm co} \colon B\times B \subset E \}.
\end{align*} 
 In particular, it follows then from b) that $E^{\times \rm c} = E^{\times \rm c}_1$.  
\smallskip

\color{black}

e) An example of a set that fails to satisfy the necessary condition~\eqref{N} and does therefore not possess a basic Cartesian convexification, 
has been discussed already in Example~\ref{ex3}\,b).
\end{remark}

Finally, let us comment on the relation between $E^{\times \rm c}$ and $\widehat{E^{\rm sc}}$.
Even though we know that $\widehat{E^{\rm sc}}$ is Cartesian convex by Lemma~\ref{lem:alternative} and clearly, $E^{\times \rm c} \subset \widehat{E^{\rm sc}}$, it is currently still open whether the identity 
\begin{equation*}
E^{\times \rm c} = \widehat{E^{\rm sc}}
\end{equation*}
holds in general in the vectorial case. For $m=1$, it follows immediately from Remark~\ref{rem:comparison}\,a), and it is also satisfied for the higher-dimensional example in Remark~\ref{rem:comparison}\,b). 
Besides, the question whether $\widehat{E^{\rm sc}}$ and $E^{\times \rm c}$ are always compact provided $E$ is compact remains to be resolved.

\subsection{Cartesian level convexity of functions}  Our new notion of convexity for sets can be carried over  to functions by requiring that their sublevel sets are Cartesian convex. 
 
\begin{definition}[Cartesian level convexity]\label{def:wslc}
A function $W:\R^m\times \R^m\to \R$ is called Cartesian level convex if all sublevel sets of $W$, i.e.,~$L_c(W)$ with $c\in \R$, are Cartesian convex.
\end{definition}

As we prove next, there is a (generalized) Jensen-type inequality for Cartesian level convex functions.

\begin{proposition}\label{prop:equiv-wslc}
	Let $W \colon \R^m \times \R^m \to \R$ be lower semicontinuous.
	Then, the following statements are equivalent:
	\begin{itemize}
		\item[$(i)$] $W$ is Cartesian level convex;\\[-0.3cm]
		\item[$(ii)$] for any $\xi_1, \xi_2, \zeta_1, \zeta_2 \in \R^m$, it holds that
		\begin{align}\label{char_wslc}
		\begin{split}
		W(\alpha, \beta) &\le  \max_{i,j \in \{1,2\} } \{W(\xi_i, \zeta_j), W(\zeta_i, \xi_j), W(\xi_i, \xi_j), W(\zeta_i, \zeta_j)\}  \\ & 
		= \max_{(\xi,\zeta)\in \{\xi_1, \xi_2, \zeta_1, \zeta_2\}\times \{\xi_1, \xi_2, \zeta_1, \zeta_2\}} W(\xi,\zeta) 
		\end{split}
		\end{align}
		for all $(\alpha, \beta)\in [\xi_1, \xi_2]\times [\zeta_1, \zeta_2]$;\\[-0.3cm]
		\item[$(iii)$]  for every $\nu, \mu \in \Pcal r(\R^m)$, 
		\begin{equation}\label{Jensen_xslc}
		W([\nu], [\mu]) \le \max \left\{ (\nu \otimes \mu)\text{-} \esssup  W,  (\mu \otimes \nu)\text{-} \esssup  W, (\nu \otimes \nu)\text{-} \esssup W, (\mu \otimes \mu)\text{-} \esssup W \right\}.
		\end{equation}
	\end{itemize}
\end{proposition}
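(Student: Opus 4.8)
The plan is to close the cycle $(i)\Rightarrow(ii)\Rightarrow(iii)\Rightarrow(i)$, with the set-level characterization in Lemma~\ref{lem:alternative} doing the heavy lifting. Lower semicontinuity of $W$ will be used only twice: to know that each sublevel set $L_c(W)$ is closed, and through the identity~\eqref{ess-supp}, which expresses an essential supremum of a lower semicontinuous function as a supremum over the support of the measure. The point worth keeping in mind throughout is that the hypotheses only control $W$ along special configurations (segments sharing an endpoint coordinate, respectively two-point measures), and Lemma~\ref{lem:alternative} is precisely what upgrades this to the unrestricted box, resp.\ barycenter, statements.

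For $(i)\Rightarrow(ii)$, I would fix $\xi_1,\xi_2,\zeta_1,\zeta_2\in\R^m$, set $S:=\{\xi_1,\xi_2,\zeta_1,\zeta_2\}$ and $c:=\max_{(\xi,\zeta)\in S\times S}W(\xi,\zeta)$, so that $S\times S\subset L_c(W)$. Since $L_c(W)$ is Cartesian convex, Lemma~\ref{lem:alternative}\,$(ii)$ yields $S^{\rm co}\times S^{\rm co}\subset L_c(W)$, and as $[\xi_1,\xi_2]\times[\zeta_1,\zeta_2]\subset S^{\rm co}\times S^{\rm co}$ this is exactly~\eqref{char_wslc}. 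The converse $(ii)\Rightarrow(i)$ is immediate: if $(\xi_1,\zeta_1),(\xi_2,\zeta_2)\in L_c(W)$ with $\{\xi_1,\xi_2,\zeta_1,\zeta_2\}\times\{\xi_1,\xi_2,\zeta_1,\zeta_2\}\subset L_c(W)$, then for $(\alpha,\beta)\in[\xi_1,\xi_2]\times[\zeta_1,\zeta_2]$ the right-hand side of~\eqref{char_wslc} is at most $c$, so $(\alpha,\beta)\in L_c(W)$; hence every $L_c(W)$ is Cartesian convex.

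For $(iii)\Rightarrow(ii)$, given $\xi_1,\xi_2,\zeta_1,\zeta_2$ and $(\alpha,\beta)\in[\xi_1,\xi_2]\times[\zeta_1,\zeta_2]$, I would write $\alpha=t\xi_1+(1-t)\xi_2$, $\beta=s\zeta_1+(1-s)\zeta_2$ with $t,s\in[0,1]$ and take $\nu:=t\delta_{\xi_1}+(1-t)\delta_{\xi_2}$, $\mu:=s\delta_{\zeta_1}+(1-s)\delta_{\zeta_2}$, so $[\nu]=\alpha$, $[\mu]=\beta$. Since $\supp\nu\subset\{\xi_1,\xi_2\}$, $\supp\mu\subset\{\zeta_1,\zeta_2\}$, and $\supp(\rho\otimes\sigma)=\supp\rho\times\supp\sigma$ for products of (Radon) measures, the supports of $\nu\otimes\mu$, $\mu\otimes\nu$, $\nu\otimes\nu$, $\mu\otimes\mu$ all lie in $S\times S$ with $S=\{\xi_1,\xi_2,\zeta_1,\zeta_2\}$; by~\eqref{ess-supp} each of the four essential suprema on the right of~\eqref{Jensen_xslc} is therefore bounded by $\max_{(\xi,\zeta)\in S\times S}W(\xi,\zeta)$, and~\eqref{Jensen_xslc} gives~\eqref{char_wslc}.

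The remaining implication $(ii)\Rightarrow(iii)$ — equivalently, in view of $(ii)\Rightarrow(i)$, the implication $(i)\Rightarrow(iii)$ — is the crux. For $\nu,\mu\in\Pcal r(\R^m)$ (with finite barycenters) let $c$ be the right-hand side of~\eqref{Jensen_xslc}, assume $c<\infty$, and set $A:=\supp\nu$, $B:=\supp\mu$. By~\eqref{ess-supp} together with the product-support identity, the four bounds $(\nu\otimes\mu)\text{-}\esssup W\le c$, etc., read $A\times B,\ B\times A,\ A\times A,\ B\times B\subset L_c(W)$, hence $(A\cup B)\times(A\cup B)\subset L_c(W)$. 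As $L_c(W)$ is Cartesian convex, Lemma~\ref{lem:alternative}\,$(ii)$ gives $(A\cup B)^{\rm co}\times(A\cup B)^{\rm co}\subset L_c(W)$, and since $L_c(W)$ is closed, so is its closure $\overline{(A\cup B)^{\rm co}}\times\overline{(A\cup B)^{\rm co}}$. Because the barycenter of a probability measure lies in the closed convex hull of its support, $[\nu]\in\overline{A^{\rm co}}\subset\overline{(A\cup B)^{\rm co}}$ and likewise $[\mu]$, so $([\nu],[\mu])\in L_c(W)$, i.e.~$W([\nu],[\mu])\le c$. The main obstacle is conceptual rather than computational: one cannot assume $A\cup B$ is bounded or $(A\cup B)^{\rm co}$ closed — $W$ is only Cartesian level convex here, not coercive — so the passage to the closure and the closed-convex-hull property of barycenters (both valid without compactness) are genuinely needed; the rest is bookkeeping around Lemma~\ref{lem:alternative} and the two standard facts on supports and barycenters.
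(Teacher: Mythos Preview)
Your proof is correct and follows essentially the same approach as the paper: the same cycle of implications, the same reduction to sublevel sets, and Lemma~\ref{lem:alternative} doing the work in both directions. The only execution difference is in $(ii)\Rightarrow(iii)$, where the paper invokes Carath\'eodory's theorem to replace $\supp\nu$ and $\supp\mu$ by finitely many points before applying Lemma~\ref{lem:alternative}, whereas you work with the full supports directly and pass to closed convex hulls using the closedness of $L_c(W)$ --- your version is arguably more careful, since it avoids the tacit assumption $[\nu]\in(\supp\nu)^{\rm co}$ (as opposed to the closed convex hull) that the paper's Carath\'eodory step relies on when the support is not compact.
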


\begin{proof} 
 $(i)\Rightarrow (ii)$ Let $\xi_1, \xi_2, \zeta_1, \zeta_2 \in \R^m$ and $(\alpha, \beta)\in [\xi_1, \xi_2]\times [\zeta_1, \zeta_2]$. Moreover, let $c$  be the right-hand side of~\eqref{char_wslc}.
Setting $A:=\{\xi_1, \xi_2, \zeta_1, \zeta_2\}$, it holds that
	\begin{align*}
	A\times A\subset L_{c}(W).
	\end{align*} 
It follows then by the Cartesian convexity of $L_{c}(W)$  
that
	\begin{align*}
	[\xi_1, \xi_2]\times [\zeta_1, \zeta_2]\subset A^{\rm co}\times A^{\rm co}\subset L_{c}(W),\end{align*}
which implies $W(\alpha, \beta)\leq c$.\smallskip

	$(ii)\Rightarrow (i)$ Let $c\in \R$ and $(\xi_1,\zeta_1), (\xi_2, \zeta_2)\in L_c(W)$ be such that $\{\xi_1, \xi_2, \zeta_1, \zeta_2\}\times \{\xi_1, \xi_2, \zeta_1, \zeta_2\}\subset L_c(W)$. We then infer from~\eqref{char_wslc} that $W(\alpha, \beta)\leq c$ for all 
 $(\alpha, \beta)\in [\xi_1, \xi_2]\times [\zeta_1, \zeta_2]$, and hence, $ [\xi_1, \xi_2]\times [\zeta_1, \zeta_2]\subset L_c(W)$.
 \smallskip

\color{black}
	$(iii)\Rightarrow (ii)$ If $\alpha=s\xi_1+(1-s)\xi_2$ and $\beta=t\zeta_1 + (1-t)\zeta_2$ for $s, t\in [0,1]$ and $\xi_1,\xi_2,\zeta_1,\zeta_2 \in \R^m$, then $(ii)$ follows directly from the choice of the probability measures 
	\begin{center}
	$\nu=s\delta_{\xi_1} + (1-s)\delta_{\xi_2}$\quad and\quad  $\mu=t\delta_{\zeta_1} + (1-t)\delta_{\zeta_2}$.
	\end{center}
	
 $(ii)\Rightarrow (iii)$ Let $\nu, \mu \in \Pcal r(\R^m)$ and denote the right-hand side of~\eqref{Jensen_xslc} by $c$.
Since $[\nu]\in (\supp \nu)^{\rm co}$ and $[\mu] \in (\supp \mu)^{\rm co}$, 
by Caratheodory's formula for convex hulls~\cite[Theorem 2.13]{Dac08}, there exist $\xi_i\in \supp \nu$, $ \zeta_j\in \supp \mu$ and $t_i, s_j  \in [0,1]$ for $i, j=1, \ldots, m+1$ such that $\sum_{i=1}^{m+1} t_i = 1 = \sum_{j=1}^{m+1} s_j$ and 
	$$
	([\nu], [\mu]) = \bigl( \textstyle\sum_{i=1}^{m+1} t_i \xi_i \, , \, \sum_{j=1}^{m+1} s_j \zeta_j   \bigr).
	$$

We will show next that 
\begin{align}\label{AtimesALcW}
A\times A\subset L_c(W) \qquad \text{with $A=\{ \xi_1,\ldots, \xi_{m+1}, \zeta_1, \ldots, \zeta_{m+1}\}.$}
\end{align}
Indeed, for all $i,j$, one obtains in view of $(\xi_i,\zeta_j) \in \supp \nu \times \supp \mu$ and the identity~\eqref{ess-supp} that
\begin{align*}
 W(\xi_i,\zeta_j)\le \sup_{\supp (\nu\otimes \mu)} W =(\nu\otimes\mu)\text{-}\esssup W \leq c, 
\end{align*} 
and therefore $(\xi_i, \zeta_j)\in L_c(W)$; an analogous argument shows that $(\zeta_i, \xi_j), (\xi_i,\xi_j), (\zeta_i, \zeta_j)\in L_c(W)$. This implies~\eqref{AtimesALcW}. 
	
Since the sublevel set $L_c(W)$ is Cartesian convex, 
we can invoke Lemma~\ref{lem:alternative} to conclude that 
$$
([\nu], [\mu]) \in A^{\rm co}\times A^{\rm co} \subset L_c(W).
$$
\end{proof}

In parallel to Section~\ref{subsec:Cconvexsets}, let us comment briefly on the relation between Cartesian and separate level convexity. Recall that a function $W:\R^m\times \R^m\to \R$ is separately level convex if all sublevel sets of $W$ are separately convex. \color{black}

\begin{remark} [Comparison with separate level convexity]\label{rem:slc}
The equivalence of these three conditions for a lower semicontinuous $W:\R^m\times \R^m\to \R$ is known from~\cite[Lemma~3.5]{KrZ19}:
	\begin{itemize}
		\item[$(i)$] $W$ is separately level convex;\\[-0.3cm]
		\item[$(ii)$] for any $\xi_1, \xi_2, \zeta_1, \zeta_2 \in \R^m$, it holds that
		\begin{align}\label{Jensen_bsep}
		W(\alpha, \beta) \le  \max_{i,j \in \{1,2\} } W(\xi_i, \zeta_j) =
		\max_{(\xi,\zeta)\in \{\xi_1, \xi_2\}\times \{\zeta_1, \zeta_2\}} W(\xi,\zeta)
		\end{align}
		for all $(\alpha, \beta)\in [\xi_1, \xi_2]\times [\zeta_1, \zeta_2]$;\\[-0.3cm]
		\item[$(iii)$] for every $\nu, \mu \in \Pcal r(\R^m)$, 
		\begin{align}\label{Jensen_slc}
		W([\nu], [\mu]) \le  (\nu \otimes \mu)\text{-} \esssup W. 
		\end{align}
	\end{itemize} 
It is apparent that $(ii)$ and $(iii)$ imply Proposition~\ref{prop:equiv-wslc}\,$(ii)$ and $(iii)$, which reflects that any separately level convex function is also Cartesian level convex. 
	When $m=1$ and $W$ is also symmetric and diagonal, the notions of Cartesian and separate level convexity are identical in light of Remark~\ref{rem:comparison}\,a). Therefore, the Jensen's formulas~\eqref{char_wslc} and~\eqref{Jensen_bsep} as well as \eqref{Jensen_xslc} and \eqref{Jensen_slc} coincide in that case. 	\smallskip
\end{remark}

 When $W$ fails to be Cartesian level convex, the corresponding envelope defined by
\begin{align}\label{Wtimeslc}
W^{\rm \times lc}(\xi, \zeta) := \sup\{V(\xi, \zeta): V:\R^m\times \R^m\to \R \text{ is Cartesian level convex and $V\leq W$}\},
\end{align}
for $(\xi, \zeta)\in \R^m\times \R^m$ provides the largest Cartesian level convex function not exceeding $W$. The next lemma, which is based on the results of Section~\ref{sec:Cartesianconvexification_set}, implies under suitable assumptions on $W$ and under consideration of~\eqref{levelset-i} a natural representation of $W^{\times \rm lc}$, namely,
\begin{align*}
W^{\times \rm lc}(\xi, \zeta)= \inf \{c\in \R: (\xi, \zeta)\in {L_c(W)}^{\times\rm c}\}\qquad  \text{for $(\xi,\zeta)\in \R^m\times \R^m$. }
\end{align*}

\begin{lemma}\label{lem:W-wslc} 
 Let $W:\R^m\times \R^m\to \R$ be symmetric, diagonal, lower semicontinuous and coercive.
 Then, $W^{\times \rm lc}$ is also symmetric, diagonal, and coercive, 
 and satisfies for any $c\in \R$ that
\begin{equation}\label{eq:level-sets-wsc}
L_c(W^{\times \rm lc}) = L_c(W)^{\times \rm c}.
\end{equation}

If all sublevel sets of $W$ admit a basic Cartesian convexification in the sense of Definition~\ref{def:condition(S)}, then $W^{\times \rm lc}$ is also  lower semicontinuous. 
 \color{black}
\end{lemma}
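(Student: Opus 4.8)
The plan is to establish the four assertions of Lemma~\ref{lem:W-wslc} in order: (a) the symmetry, diagonality, and coercivity of $W^{\times\rm lc}$; (b) the level-set identity \eqref{eq:level-sets-wsc}; and (c) the lower semicontinuity of $W^{\times\rm lc}$ under the additional hypothesis that all sublevel sets of $W$ admit a basic Cartesian convexification.

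First I would record that $W^{\times\rm lc}$, being a pointwise supremum of Cartesian level convex functions, is itself Cartesian level convex; this uses that an arbitrary intersection of Cartesian convex sets is Cartesian convex (already noted in the discussion preceding Definition~\ref{def:condition(S)}), applied to the sublevel sets $L_c(V) = \bigcap L_c(V_i)$. For symmetry and diagonality: if $V \le W$ is Cartesian level convex, then $(\xi,\zeta) \mapsto \max\{V(\xi,\zeta), V(\zeta,\xi), V(\xi,\xi), V(\zeta,\zeta)\}$ is still Cartesian level convex (its sublevel sets are $\widehat{L_c(V)}$, and the diagonalization/symmetrization of a Cartesian convex set is Cartesian convex — this follows from Lemma~\ref{lem:alternative}$(iii)$ since it does not change the maximal Cartesian squares, or directly from the fact that intersecting with the diagonal constraint preserves the defining property), and it still lies below $W$ because $W=\widehat W$. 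Hence the supremum defining $W^{\times\rm lc}$ is unchanged if we restrict to symmetric diagonal competitors, forcing $W^{\times\rm lc}$ to be symmetric and diagonal. Coercivity is immediate: since $W$ is coercive there is a coercive level-convex (hence Cartesian level convex) minorant, e.g.\ $(\xi,\zeta)\mapsto c_1|(\xi,\zeta)|^q - c_2$ type lower bounds, so $W^{\times\rm lc}$ inherits the growth from below.

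For \eqref{eq:level-sets-wsc} I would argue both inclusions. The inclusion $L_c(W^{\times\rm lc}) \supset L_c(W)^{\times\rm c}$: the function $U_c$ defined via $U_c(\xi,\zeta) = \inf\{t \in \R : (\xi,\zeta) \in L_t(W)^{\times\rm c}\}$ has, by \eqref{level-sets-ii} applied to the monotone family $(L_t(W)^{\times\rm c})_t$ — after checking the required intersection-stability $\bigcap_j L_{t_j}(W)^{\times\rm c} = L_t(W)^{\times\rm c}$ for $t_j \downarrow t$, which follows from $\bigcap_j L_{t_j}(W) = L_t(W)$ (lower semicontinuity of $W$) together with compactness, using Lemma~\ref{lem:compactness} if one first reduces to compact sublevel sets via coercivity — sublevel sets exactly $L_t(W)^{\times\rm c}$; since $U_c$ is Cartesian level convex and $U_c \le W$, we get $U_c \le W^{\times\rm lc}$, whence $L_c(W)^{\times\rm c} = L_c(U_c) \subset L_c(W^{\times\rm lc})$. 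Conversely, $L_c(W^{\times\rm lc})$ is Cartesian convex and contains $L_c(W)$ (since $W^{\times\rm lc}\le W$... wait, the correct direction: $W^{\times\rm lc}\le W$ gives $L_c(W)\subset L_c(W^{\times\rm lc})$), so it contains the smallest Cartesian convex superset $L_c(W)^{\times\rm c}$; but one must be slightly careful because $L_c(W^{\times\rm lc})$ might be strictly larger than $L_c(W)^{\times\rm c}$ only if there is a gap, which is ruled out precisely because $U_c$ constructed above is already a Cartesian level convex minorant with the right sublevel sets, so $W^{\times\rm lc}\ge U_c$ forces $L_c(W^{\times\rm lc})\subset L_c(U_c)=L_c(W)^{\times\rm c}$. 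This gives equality.

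Finally, for lower semicontinuity of $W^{\times\rm lc}$ under the basic-Cartesian-convexification hypothesis: by \eqref{eq:level-sets-wsc} it suffices to show each $L_c(W^{\times\rm lc}) = L_c(W)^{\times\rm c}$ is closed. Reducing to compact sublevel sets of $W$ via coercivity, the hypothesis says each $L_c(W)$ has a basic Cartesian convexification, so by Remark~\ref{rem:NS}\,b) we have $L_c(W)^{\times\rm c} = (L_c(W))_1^{\times\rm c}$, and $(L_c(W))_1^{\times\rm c}$ is compact by Lemma~\ref{lem:compactness}. Hence every sublevel set of $W^{\times\rm lc}$ is closed, which is equivalent to lower semicontinuity. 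The main obstacle I anticipate is the intersection-stability/closedness bookkeeping in the proof of \eqref{eq:level-sets-wsc}: one needs that passing to Cartesian convex hulls commutes well with decreasing intersections of compact sets (so that the family $(L_t(W)^{\times\rm c})_t$ genuinely realizes $W^{\times\rm lc}$ via \eqref{level-sets-ii}), and handling the general non-compact case requires either a preliminary coercivity-based reduction or a direct verification that $\bigcap_j L_{t_j}(W)^{\times\rm c}$ is Cartesian convex and contains $L_t(W)$ hence equals $L_t(W)^{\times\rm c}$ — both doable but needing care.
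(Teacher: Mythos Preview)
Your approach is essentially the same as the paper's: both inclusions of \eqref{eq:level-sets-wsc} are obtained via the auxiliary function $\widetilde W(\xi,\zeta)=\inf\{t:(\xi,\zeta)\in L_t(W)^{\times\rm c}\}$ (your $U_c$) together with the ``easy'' direction from $L_c(W)\subset L_c(W^{\times\rm lc})$ and Cartesian convexity of the latter, with the intersection stability $\bigcap_j L_{c_j}(W)^{\times\rm c}=L_c(W)^{\times\rm c}$ supplied in the paper by Lemma~\ref{lemma:basic-topo}, and lower semicontinuity via Remark~\ref{rem:NS}\,b) and Lemma~\ref{lem:compactness}. Note that in your first paragraph on \eqref{eq:level-sets-wsc} you momentarily reverse the implication (from $U_c\le W^{\times\rm lc}$ one gets $L_c(W^{\times\rm lc})\subset L_c(U_c)$, not the other way), but you state it correctly in the next paragraph, so the argument is sound.
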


\begin{proof} 
To show~\eqref{eq:level-sets-wsc}, let $c\in \R$ be fixed. 
Since $W^{\times \rm lc} \le W$, and thus, $L_c(W) \subset L_c(W^{\times \rm lc})$, one has
\begin{align*}
L_c(W)^{\times \rm c}\subset L_c(W^{\times \rm lc})^{\times \rm c} =L_c(W^{\times \rm lc}),
\end{align*} 
where the last identity follows from the Cartesian level convexity of $W^{\times \rm lc}$.

For the reverse inclusion, consider the auxiliary function
\begin{equation*} 
\widetilde{W}(\xi,\zeta) := \inf \{c\in \R: (\xi, \zeta)\in {L_c(W)}^{\times\rm c}\} \quad \text{ for $(\xi,\zeta)\in \R^m\times \R^m$}.
\end{equation*}
Since Lemma~\ref{lemma:basic-topo} gives
	\begin{equation*}
	\bigcap_{j\in \N}  L_{c+\frac{1}{j}}(W)^{\times\rm c} =	L_{c}(W)^{\times \rm c},
	\end{equation*}
	 it follows with~\eqref{level-sets-ii} that $L_c(\widetilde{W}) = {L_c(W)}^{\times \rm c}$.  Consequently, $\widetilde{W}$ is Cartesian level convex and therefore, $W^{\times \rm lc}\geq \widetilde{W}$. This implies
$$
L_c(W^{\times \rm lc}) \subset L_c( \widetilde{W}) = {L_c(W)}^{\times \rm c},
$$
completing the proof of~\eqref{eq:level-sets-wsc}. 

The coercivity and lower semicontinuity of $W^{\times \rm lc}$ results from the observation that its level sets are bounded and closed, respectively. The latter holds under the additional assumption of  a basic Cartesian convex hull due to~Remark~\ref{rem:NS}\,b) and Lemma~\ref{lem:compactness}. 
\color{black}
\end{proof}

Finally, we relate $W^{\times \rm lc}$ with the separately level convex envelope of $W$, defined in analogy to~\eqref{Wtimeslc} and denoted by $W^{\rm slc}$ . 

\begin{remark} [Comparison with separate level convexification]\label{rem:slc_hull}
Let $W:\R^m\times \R^m\to \R$ be symmetric, diagonal, and lower semicontinuous. 
Then, $W^{\times \rm lc}\geq \widehat{W^{\rm slc}}$ 
is a consequence of  Remark~\ref{rem:slc} along with the symmetry and diagonality of  $W^{\times \rm lc}$ (cf.~Lemma~\ref{lem:W-wslc}). The validity of the identity 
$$W^{\times \rm lc}=\widehat{W^{\rm slc}}$$ remains an open question, but we can show that $\widehat{W^{\rm slc}}$ is Cartesian level convex by the generalized Jensen's inequality in Proposition~\ref{prop:equiv-wslc}\,$(iii)$. Indeed, for any $\nu, \mu \in \Pcal r(\R^m)$,
\begin{align*}
&\max \bigl\{ (\nu \otimes \mu)\text{-} \esssup \widehat{W^{\rm slc}}, (\mu \otimes \nu)\text{-} \esssup \widehat{W^{\rm slc}}, (\nu \otimes \nu)\text{-} \esssup \widehat{W^{\rm slc}}, (\mu\otimes\mu)\text{-}\esssup  \widehat{W^{\rm slc}}\bigr\} \\ 
&\quad\geq \max \left\{ (\nu \otimes \mu)\text{-} \esssup W^{\rm slc}, (\mu \otimes \nu)\text{-} \esssup W^{\rm slc}, (\nu \otimes \nu)\text{-} \esssup W^{\rm slc},  (\mu \otimes \mu)\text{-} \esssup W^{\rm slc}\right\} \\
&\quad \geq \max\{W^{\rm slc}([\nu], [\mu]), W^{\rm slc}([\mu], [\nu]), W^{\rm slc}([\nu], [\nu]), W^{\rm slc}([\mu], [\mu]), \} \\ 
&\quad =\widehat{W^{\rm slc}}([\nu], [\mu]),
\end{align*} 
where we have exploited~\eqref{eq:W-hat} along with the observations of Remark~\ref{rem:slc}.
\end{remark}

\section{Weak$^\ast$ closures of nonlocal inclusions}\label{sec:inclusions}
The relaxation of nonlocal $L^\infty$-functionals is closely interlinked with the characterization of weak$^\ast$ limits of sequences satisfying nonlocal inclusions. 
Building on previous findings in \cite{KrZ19, Kreisbeck-Zappale-2019Loss}, we approach this problem from a
different angle and investigate the issue of structure preservation 
in the general vectorial setting.

For $E\subset \R^m\times\R^m$, we introduce the set of solutions $u\in L^{\infty}(\Omega;\R^m)$ to the nonlocal inclusion $(u(x), u(y))\in E$  for a.e. $(x,y) \in \Omega\times \Omega$, 
\begin{equation*}
\mathcal{A}_E= \left\{  u\in L^{\infty}(\Omega;\R^m) \colon (u(x),u(y)) \in E \;\text{  for a.e. } (x,y)\in \Omega\times \Omega  \right\}.
\end{equation*}
In~\cite{KrZ19}, for a compact set $K\subset \R^m\times\R^m$, it was proved that 
\begin{equation}\label{AK-alt}
\Acal_{K} = \bigcup_{B\times B\in \Pcal_K} \Acal_{B\times B} = \bigcup_{B\times B\in \Pcal_K} L^\infty(\Omega;B),
\end{equation}
where $L^\infty(\Omega;B)$ denotes the essentially bounded Lebesgue measurable functions from $\Omega$, taking values in $B$. 
Moreover,
the inclusion $\Acal_K$ is invariant under symmetrization and diagonalization of $K$, i.e.,
\begin{equation*}
\Acal_K=\Acal_{\widehat K}, 
\end{equation*}
and that the order principle
\begin{align}\label{order_principle}
\Acal_K\subset \Acal_L\qquad \text{ if and only if}\qquad  \widehat K\subset\widehat L
\end{align}
 holds for all $K, L\subset \R^m\times \R^m$ compact. 
 
 We define the weak$^\ast$ closure of $\Acal_E$ as
\begin{align*}
\mathcal{A}_E^{\infty}:= \left\{  u\in L^{\infty}(\Omega;\R^m) \colon   u_j\rightharpoonup^* u \text{ in } L^{\infty}(\Omega;\R^m) \text{ with } (u_j)_j \subset \mathcal{A}_E\right\}
\end{align*}
in analogy to~\cite{KrZ19}. 
If $K\subset \R^m\times \R^m$ is compact, then this weak$^\ast$ closure can be expressed as
	\begin{align}\label{characterization_inclusionAcal1}
	\mathcal{A}^{\infty}_K=\bigcup_{B\times B\in \Pcal_K} \Acal_{B^{\rm co}\times B^{\rm co}} = \bigcup_{B\times B\in \Pcal_K} L^\infty(\Omega;B^{\rm co}),
\end{align}
see~\cite[Remark 3.2 b)]{Kreisbeck-Zappale-2019Loss} and \cite[Theorem 1.1]{KrZ19}. 
We point out the strong structural resemblence between~\eqref{characterization_inclusionAcal1} and the representation formula for $\Acal_K$ in~\eqref{AK-alt}. In both cases, it suffices to consider the nonlocal inclusions restricted to the maximal Cartesian sets of $K$, whose solutions trivially correspond to essentially bounded functions with a suitably restricted codomain. \smallskip

Our guiding question in this section is the following:
\begin{center}
Under what assumptions on $K$ does the weak$^\ast$ closure of $\Acal_K$ preserve the structure of the nonlocal inclusion, that is, when is $\Acal_K^\infty=\Acal_L$ with a suitable $L\subset \R^m\times \R^m$?
\end{center}

According to~\cite[Theorem 1.1]{KrZ19}, structure preservation of $\Acal_K$ under weak$^\ast$ closures is always guaranteed when $m=1$, and it was shown to hold under specific assumptions in the vectorial case. 
As the first step towards a complete picture for arbitrary dimensions, we start by identifying the Cartesian convexity of $K$ as a necessary and sufficient condition for $\mathcal{A}_K$ being weakly$^*$ closed in $L^{\infty}(\Omega;\R^m)$, or in other words, for $\mathcal{A}_K^{\infty}= \mathcal{A}_{K}$.\\

\begin{proposition}~\label{prop:weakstar_closure}
Let $K\subset \R^m\times \R^m$ be symmetric, diagonal, and compact. 
Then, $\Acal_K$ is $L^\infty$-weakly$^\ast$ closed if and only if $K$ is Cartesian convex.
\end{proposition}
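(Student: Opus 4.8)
The plan is to prove both implications by reducing the nonlocal inclusion $\mathcal{A}_K$ to inclusions over the maximal Cartesian squares of $K$, exploiting the representation formulas~\eqref{AK-alt} and~\eqref{characterization_inclusionAcal1} that are already available. Recall that $\mathcal{A}_K^\infty = \mathcal{A}_K$ is precisely the statement that $\mathcal{A}_K$ is weakly$^\ast$ closed, so the task is to compare $\bigcup_{B\times B\in\mathcal{P}_K} L^\infty(\Omega;B)$ with $\bigcup_{B\times B\in\mathcal{P}_K} L^\infty(\Omega;B^{\rm co})$.

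\textbf{Sufficiency.} Suppose $K$ is Cartesian convex. By Lemma~\ref{lem:alternative}\,$(iii)$, every maximal Cartesian square $B\times B\in\mathcal{P}_K$ is convex, so $B^{\rm co}=B$ and hence $L^\infty(\Omega;B^{\rm co})=L^\infty(\Omega;B)$ for each such $B$. Comparing~\eqref{AK-alt} and~\eqref{characterization_inclusionAcal1} term by term then yields $\mathcal{A}_K^\infty=\mathcal{A}_K$, i.e., $\mathcal{A}_K$ is weakly$^\ast$ closed.

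\textbf{Necessity.} Conversely, assume $\mathcal{A}_K$ is weakly$^\ast$ closed, so $\mathcal{A}_K^\infty=\mathcal{A}_K$. I want to show every $B\times B\in\mathcal{P}_K$ is convex, which by Lemma~\ref{lem:alternative}\,$(iii)$ gives Cartesian convexity of $K$. Fix $B\times B\in\mathcal{P}_K$. From~\eqref{characterization_inclusionAcal1}, $L^\infty(\Omega;B^{\rm co})\subset\mathcal{A}_K^\infty=\mathcal{A}_K=\bigcup_{A\times A\in\mathcal{P}_K}L^\infty(\Omega;A)$. Now take any constant function $u\equiv\xi$ with $\xi\in B^{\rm co}$; it lies in some $L^\infty(\Omega;A)$, hence $\xi\in A$ for some $A\times A\in\mathcal{P}_K$, showing $B^{\rm co}\subset\bigcup_{A\times A\in\mathcal{P}_K}A$. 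To upgrade this pointwise statement into $B^{\rm co}\subset B$, I would choose a function $u\in L^\infty(\Omega;B^{\rm co})$ that takes \emph{all} (or sufficiently many) values of $B^{\rm co}$ — e.g., a measurable surjection onto a countable dense subset of $B^{\rm co}$, or more robustly a function whose essential range is exactly $\overline{B^{\rm co}}$. Since $u\in\mathcal{A}_K$, there is a single $A\times A\in\mathcal{P}_K$ with $u\in L^\infty(\Omega;A)$, forcing the essential range of $u$ — and therefore a dense subset of $B^{\rm co}$, hence $B^{\rm co}$ itself if $A$ is closed, or at least $B^{\rm co}\subset A^{\rm co}$ in general — to lie in $A$. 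Combined with $A\times A\subset K$ and $B\subset B^{\rm co}\subset A$, the maximality of $B\times B$ gives $A=B$, whence $B^{\rm co}\subset B$ and $B$ is convex.

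\textbf{Main obstacle.} The delicate point is the necessity direction: passing from the pointwise containment "each value of $B^{\rm co}$ lies in some maximal square" to the uniform statement "all of $B^{\rm co}$ lies in a \emph{single} maximal square." A priori the union $\bigcup_{A\times A\in\mathcal{P}_K}A$ could cover $B^{\rm co}$ without any single $A$ containing it. The resolution is to test $\mathcal{A}_K$ not on constants but on a fixed function whose essential range is the whole set $B^{\rm co}$ (possible since $\Omega$ is a bounded open set, hence nonatomic and of positive measure, and $B^{\rm co}$ is compact — using Lemma~\ref{lem:compactness} if needed, or simply that $B$ is bounded so $B^{\rm co}$ is bounded and one works with its closure); membership of such a $u$ in $\mathcal{A}_K=\bigcup L^\infty(\Omega;A)$ then pins down one $A$ with $\overline{B^{\rm co}}\subset A$, and maximality closes the argument. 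One should also double-check that $\mathcal{A}_K=\mathcal{A}_{\widehat K}$ and the hypothesis that $K$ is symmetric and diagonal are used only to legitimize invoking~\eqref{AK-alt}, \eqref{characterization_inclusionAcal1}, and~\eqref{symdiag}; no further structure is needed.
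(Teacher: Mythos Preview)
Your proof is correct. The sufficiency argument is essentially the paper's, phrased via Lemma~\ref{lem:alternative}\,$(iii)$ instead of $(iv)$.

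For the necessity, you take a genuinely different route. The paper simply observes that $\mathcal{A}_{B^{\rm co}\times B^{\rm co}}\subset\mathcal{A}_K^\infty=\mathcal{A}_K$ and then invokes the order principle~\eqref{order_principle} to conclude $B^{\rm co}\times B^{\rm co}\subset K$ in one stroke; Cartesian convexity then follows from Lemma~\ref{lem:alternative}\,$(iv)$. Your argument instead constructs a test function $u\in L^\infty(\Omega;B^{\rm co})$ whose essential range is dense in $B^{\rm co}$, uses~\eqref{AK-alt} to trap $u$ in a single $L^\infty(\Omega;A)$ with $A\times A\in\mathcal{P}_K$, and then exploits compactness of $A$ and maximality of $B$ to force $B^{\rm co}=B$. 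This works (indeed, maximal Cartesian squares of a compact $K$ are automatically compact, so your closure step goes through; the reference to Lemma~\ref{lem:compactness} is not the right one, but the fact you need is elementary). Your approach is more hands-on and avoids citing~\eqref{order_principle}, at the cost of effectively re-deriving a special case of it; the paper's version is shorter precisely because that principle has already been established. Interestingly, the dense-range test-function idea you use here is exactly the technique the paper deploys later in the proof of Lemma~\ref{lem:aux1}.
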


\begin{proof}
To prove the sufficiency, observe that the Cartesian convexity of $K$ implies 
\begin{align*}
K=\bigcup_{B\times B\in \Pcal_K} B^{\rm co}\times B^{\rm co}
\end{align*} 
by Lemma~\ref{lem:alternative}\,$(iv)$. Therefore, along with~\eqref{characterization_inclusionAcal1},
\begin{align*}
\Acal_K\subset \Acal_K^\infty = \bigcup_{B\times B\in \Pcal_K} \Acal_{B^{\rm co}\times B^{\rm co}}\subset \Acal_{\bigcup_{B\times B\in \Pcal_K} B^{\rm co}\times B^{\rm co}} = \Acal_K.
\end{align*}
As this chain of inclusions turns into identities, the weak$^\ast$ closedness of $\Acal_K$ follows. \color{olive}

\color{black} For the necessity, suppose that $\Acal_K^\infty=\Acal_K$. Then, again by~\eqref{characterization_inclusionAcal1}, it holds for any $B\times B\in \mathcal P_K$ that  $\Acal_{B^{\rm co}\times B^{\rm co}}\subset \Acal_K^{\infty}=\Acal_K$, so that $B^{\rm co}\times B^{\rm co}\subset K$ in view of~\eqref{order_principle}. 
\color{black} Consequently, 
\begin{align*}
K=\bigcup_{B\times B\in \mathcal P_K} B^{\rm co}\times B^{\rm co},
\end{align*}
which, according to Lemma~\ref{lem:alternative}, shows that $K$ is Cartesian  convex.
\end{proof}

The previous statement provides a natural candidate for the compact set $L\subset \R^m\times \R^m$ in the representation $\Acal_K^\infty=\Acal_L$ (if existent), namely, 
\begin{align}\label{465}
L=K^{\times \rm c}. 
\end{align} 
Indeed, since $L$ needs to be Cartesian convex by Proposition~\ref{prop:weakstar_closure} and contains $K$ because of $\Acal_K\subset\Acal_K^\infty=\Acal_L$ in view of~\eqref{order_principle}, clearly, $L\supset K^{\times \rm c}$. On the other hand, it follows from~\eqref{characterization_inclusionAcal1} that
\begin{align*}
\Acal_L=\Acal_{K}^\infty = \bigcup_{B\times B\in \Pcal_K}\Acal_{B^{\rm co}\times B^{\rm co}} \subset \Acal_{K^{\times \rm c}_1},
\end{align*} 
which implies $L\subset K^{\times \rm c}_1\subset K^{\times \rm c}$ in view of~\eqref{order_principle}. 
Notice that $K_1^{\times \rm c}$ is, besides being symmetric and diagonal, also compact according to~Lemma~\ref{lem:compactness}. 

However, $\Acal_{K}^\infty=\Acal_{K^{\times \rm c}}$ cannot be true for all $K$; we have implicitly encountered a limitation already in the previous reasoning, which requires 
\begin{align*}
K^{\times \rm c} = K_1^{\times \rm c}.
\end{align*} 
Recalling that the Cartesian convex hull of $K$ is called basic in the sense of Definition~\ref{def:condition(S)} if $\Pcal_{K^{\times \rm c}} \subset \{B^{\rm co}\times B^{\rm co}: B\times B\in \Pcal_K\}$ allows us to deduce the following characterization result.

\begin{proposition}\label{prop:structure_inclusions}
Let $K\subset \R^m\times \R^m$ be symmetric, diagonal, and compact. Then,
\begin{align*}
 \Acal_{K}^\infty=\Acal_{K^{\times \rm c}}\qquad \text{if and only if} \qquad \text{$K$ has a  basic Cartesian convexification}.
\end{align*}
\end{proposition}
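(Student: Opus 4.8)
The strategy is to prove both implications by leveraging the representation formula~\eqref{characterization_inclusionAcal1} for $\Acal^\infty_K$ together with the order principle~\eqref{order_principle}, and to use the preparatory observations that precede the statement (in particular that $L=K^{\times\rm c}$ is the only possible candidate, via~\eqref{465}).

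\emph{Sufficiency.} Assume $K$ has a basic Cartesian convexification, i.e.~$\Pcal_{K^{\times\rm c}}\subset\{B^{\rm co}\times B^{\rm co}:B\times B\in\Pcal_K\}$. By Remark~\ref{rem:NS}\,b) this gives $K^{\times\rm c}=K_1^{\times\rm c}$, and by Lemma~\ref{lem:compactness} the set $K^{\times\rm c}$ is compact (as well as symmetric and diagonal). Now apply~\eqref{characterization_inclusionAcal1} to the compact set $K^{\times\rm c}$:
\begin{align*}
\Acal_{K^{\times\rm c}}^\infty=\bigcup_{C\times C\in\Pcal_{K^{\times\rm c}}}\Acal_{C^{\rm co}\times C^{\rm co}}=\bigcup_{C\times C\in\Pcal_{K^{\times\rm c}}}\Acal_{C\times C}=\Acal_{K^{\times\rm c}},
\end{align*}
where the middle equality uses that every $C\times C\in\Pcal_{K^{\times\rm c}}$ is already convex (Lemma~\ref{lem:alternative}\,$(iii)$, since $K^{\times\rm c}$ is Cartesian convex), and the last equality uses~\eqref{symdiag}. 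Thus $\Acal_{K^{\times\rm c}}$ is weakly$^\ast$ closed. It remains to identify $\Acal_K^\infty$ with $\Acal_{K^{\times\rm c}}$: by~\eqref{characterization_inclusionAcal1} applied to $K$, $\Acal_K^\infty=\bigcup_{B\times B\in\Pcal_K}\Acal_{B^{\rm co}\times B^{\rm co}}$; since each $B^{\rm co}\times B^{\rm co}\subset K_1^{\times\rm c}=K^{\times\rm c}$, the order principle gives $\Acal_{B^{\rm co}\times B^{\rm co}}\subset\Acal_{K^{\times\rm c}}$, so $\Acal_K^\infty\subset\Acal_{K^{\times\rm c}}$. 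Conversely, using the basic-convexification hypothesis, $\Pcal_{K^{\times\rm c}}\subset\{B^{\rm co}\times B^{\rm co}:B\times B\in\Pcal_K\}$, and by~\eqref{symdiag} together with~\eqref{characterization_inclusionAcal1},
\begin{align*}
\Acal_{K^{\times\rm c}}=\bigcup_{C\times C\in\Pcal_{K^{\times\rm c}}}\Acal_{C\times C}\subset\bigcup_{B\times B\in\Pcal_K}\Acal_{B^{\rm co}\times B^{\rm co}}=\Acal_K^\infty.
\end{align*}
Hence $\Acal_K^\infty=\Acal_{K^{\times\rm c}}$.

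\emph{Necessity.} Suppose $\Acal_K^\infty=\Acal_{K^{\times\rm c}}$. Since $\Acal_{K^{\times\rm c}}$ is then weakly$^\ast$ closed, Proposition~\ref{prop:weakstar_closure} forces $K^{\times\rm c}$ to be Cartesian convex (which it is anyway), and more usefully $\Acal_{K^{\times\rm c}}^\infty=\Acal_{K^{\times\rm c}}$; but the real input is the combination of~\eqref{465} and~\eqref{characterization_inclusionAcal1}. Indeed, from $\Acal_K^\infty=\Acal_{K^{\times\rm c}}$ and~\eqref{characterization_inclusionAcal1} applied to $K$ we get $\Acal_{K^{\times\rm c}}=\bigcup_{B\times B\in\Pcal_K}\Acal_{B^{\rm co}\times B^{\rm co}}\subset\Acal_{K_1^{\times\rm c}}$, so by the order principle~\eqref{order_principle}, $K^{\times\rm c}\subset K_1^{\times\rm c}$; since always $K_1^{\times\rm c}\subset K^{\times\rm c}$, we obtain $K^{\times\rm c}=K_1^{\times\rm c}$. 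Now, for any maximal Cartesian square $C\times C\in\Pcal_{K^{\times\rm c}}=\Pcal_{K_1^{\times\rm c}}$, the representation~\eqref{orderone} of $K_1^{\times\rm c}$ as $\bigcup_{B\times B\in\Pcal_K}B^{\rm co}\times B^{\rm co}$ together with Lemma~\ref{prop:hidden-squares}\,$(i)$ (applied with index set $\Pcal_K$ and sets $B^{\rm co}$) shows that either $C\subset B^{\rm co}$ for some $B\times B\in\Pcal_K$, or $C\subset M_{\Pcal_K}$. In the first case, $C^{\rm co}=C\subset B^{\rm co}$ gives $C\times C\subset B^{\rm co}\times B^{\rm co}\subset K^{\times\rm c}$, and maximality of $C\times C$ forces $C\times C=B^{\rm co}\times B^{\rm co}$, as required by Definition~\ref{def:condition(S)}. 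The second case — where $C$ would be a hidden maximal Cartesian square not of the form $B^{\rm co}\times B^{\rm co}$ — must be excluded. This is where I expect the main obstacle: one has to argue that such a hidden square $C\times C$ inside $K^{\times\rm c}=K_1^{\times\rm c}$ would contradict the equality $\Acal_K^\infty=\Acal_{K^{\times\rm c}}$, presumably by noting that a genuinely hidden maximal square produces, upon one more Cartesian convexification step, a strictly larger set (as in Example~\ref{ex3}\,b)), contradicting $K^{\times\rm c}=K_1^{\times\rm c}$; more precisely, if $C\times C\in\Pcal_{K_1^{\times\rm c}}$ is not of the form $B^{\rm co}\times B^{\rm co}$ with $B\times B\in\Pcal_K$, then $C^{\rm co}\times C^{\rm co}\subset K_2^{\times\rm c}$, and one checks $C^{\rm co}\times C^{\rm co}\not\subset K_1^{\times\rm c}$ unless $C$ was already convex, in which case $C\times C$ is a maximal Cartesian square of $K_1^{\times\rm c}$ that is convex but not among the $B^{\rm co}\times B^{\rm co}$ — and one shows directly from~\eqref{orderone} that every convex maximal Cartesian square of $K_1^{\times\rm c}$ contained in $M_{\Pcal_K}$ must in fact sit inside some $B^{\rm co}$ by re-examining which squares $B^{\rm co}\times B^{\rm co}$ cover the relevant points, contradicting $C\not\subset B^{\rm co}$ for all $B$. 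In summary, the equality $K^{\times\rm c}=K_1^{\times\rm c}$ obtained above is exactly what rules out hidden maximal squares, yielding~\eqref{basic_convexification} and hence the basic Cartesian convexification of $K$.

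\textbf{Remark on the anticipated difficulty.} The forward direction of the equivalence (sufficiency of basic convexification) is essentially bookkeeping with~\eqref{characterization_inclusionAcal1}, Lemma~\ref{lem:alternative}, and~\eqref{symdiag}. The delicate point is the converse, specifically closing the gap between "$K^{\times\rm c}=K_1^{\times\rm c}$" and the full statement~\eqref{basic_convexification} that every maximal Cartesian square of $K^{\times\rm c}$ is of the form $B^{\rm co}\times B^{\rm co}$; this is precisely the content of Remark~\ref{rem:NS}\,c), which asserts that~\eqref{basic_convexification} can be replaced by~\eqref{basic_convexification_1}, so that after establishing $K^{\times\rm c}=K_1^{\times\rm c}$ one may invoke Remark~\ref{rem:NS}\,c) to conclude. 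Thus the cleanest route for the necessity is: derive $K^{\times\rm c}=K_1^{\times\rm c}$ as above, then quote Remark~\ref{rem:NS}\,c) (together with the fact that $K^{\times\rm c}=K_1^{\times\rm c}$ implies, via~\eqref{orderone} and~\eqref{465}, the inclusion $\Pcal_{K_1^{\times\rm c}}\subset\{B^{\rm co}\times B^{\rm co}:B\times B\in\Pcal_K\}$) to obtain that $K$ has a basic Cartesian convexification.
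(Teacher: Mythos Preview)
Your sufficiency argument is correct and essentially identical to the paper's (which packages both directions into an auxiliary Lemma~\ref{lem:aux1}, and the backward direction of that lemma is exactly what you wrote).

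The necessity argument, however, has a genuine gap. You correctly derive $K^{\times\rm c}=K_1^{\times\rm c}$ from the hypothesis, but then you try to deduce the full condition~\eqref{basic_convexification} from this equality \emph{alone}, and none of your proposed routes work. The dichotomy from Lemma~\ref{prop:hidden-squares}\,$(i)$ leaves open the ``hidden'' case $C\subset M_{\Pcal_K}$, and your sketch for excluding it is not an argument: since every maximal $C\times C\in\Pcal_{K^{\times\rm c}}$ is automatically convex (Lemma~\ref{lem:alternative}\,$(iii)$), passing to $C^{\rm co}\times C^{\rm co}$ gives nothing new, so no contradiction with $K_2^{\times\rm c}=K_1^{\times\rm c}$ arises. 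Your appeal to Remark~\ref{rem:NS}\,c) is circular --- that remark takes~\eqref{basic_convexification_1} as \emph{hypothesis}, which is precisely the inclusion $\Pcal_{K_1^{\times\rm c}}\subset\{B^{\rm co}\times B^{\rm co}:B\times B\in\Pcal_K\}$ you are trying to prove; and the parenthetical claim that this inclusion follows from $K^{\times\rm c}=K_1^{\times\rm c}$ ``via~\eqref{orderone} and~\eqref{465}'' is simply asserted, not argued. In short, you have weakened the hypothesis $\Acal_K^\infty=\Acal_{K^{\times\rm c}}$ to its set-theoretic consequence $K^{\times\rm c}=K_1^{\times\rm c}$ and then discovered that this consequence is not visibly strong enough.

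The paper's method (the forward direction of Lemma~\ref{lem:aux1}) avoids this by exploiting the functional hypothesis directly. Given $D\times D\in\Pcal_{K^{\times\rm c}}$, one takes a countable dense subset $\{d_1,d_2,\ldots\}$ of $D$ and, for each $i$, a simple function $u_i$ attaining exactly the values $d_1,\ldots,d_i$ on sets of positive measure. Then $u_i\in\Acal_{K^{\times\rm c}}=\Acal_K^\infty$, so by~\eqref{characterization_inclusionAcal1} there exists $B_i\times B_i\in\Pcal_K$ with $\{d_1,\ldots,d_i\}\subset B_i^{\rm co}$. A Hausdorff-compactness argument (Lemma~\ref{lem:Hausdorffconverg}) then produces $B\times B\subset K$ with $D\subset B^{\rm co}$, and maximality forces $D=B^{\rm co}$. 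The key point you are missing is that one must use $\Acal_K^\infty$ (and its representation via $\Pcal_K$) to pin down the $B_i$; merely knowing $\{d_1,\ldots,d_i\}\times\{d_1,\ldots,d_i\}\subset K_1^{\times\rm c}$ only places the finite set inside some maximal square of $K_1^{\times\rm c}$, which could itself be hidden.
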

The proof is 
 a direct consequence of the next lemma.

\begin{lemma}\label{lem:aux1}
Let $K, L\subset \R^m\times \R^m$ be symmetric, diagonal, and compact. Then, $\Acal_K^\infty=\Acal_L$ if and only if 
\begin{align}\label{condition1}
L=
K^{\times \rm c} \quad \text{and} \quad 
\mathcal P_{L} \subset   \{B^{\rm co}\times B^{\rm co}: B\times B \in \mathcal P_K\}.
\end{align}
\end{lemma}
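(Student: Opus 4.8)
\textbf{Proof proposal for Lemma~\ref{lem:aux1}.}

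The plan is to establish the two directions separately, relying on the characterization~\eqref{characterization_inclusionAcal1} of $\Acal_K^\infty$, the order principle~\eqref{order_principle}, Proposition~\ref{prop:weakstar_closure}, and Lemma~\ref{lem:alternative}. For the direction ``$\eqref{condition1}\Rightarrow\Acal_K^\infty=\Acal_L$'': assuming $L=K^{\times\rm c}$ and $\Pcal_L\subset\{B^{\rm co}\times B^{\rm co}:B\times B\in\Pcal_K\}$, I would first note that the second condition is exactly the statement that $K$ has a basic Cartesian convexification (Definition~\ref{def:condition(S)}), so by Remark~\ref{rem:NS}\,b) we have $L=K^{\times\rm c}=K_1^{\times\rm c}=\bigcup_{B\times B\in\Pcal_K}B^{\rm co}\times B^{\rm co}$. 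Then~\eqref{characterization_inclusionAcal1} gives $\Acal_K^\infty=\bigcup_{B\times B\in\Pcal_K}\Acal_{B^{\rm co}\times B^{\rm co}}=\bigcup_{B\times B\in\Pcal_K}L^\infty(\Omega;B^{\rm co})$, and since this union of Cartesian squares equals $L$, this is precisely $\Acal_L=\Acal_{\bigcup_{B\times B\in\Pcal_K}B^{\rm co}\times B^{\rm co}}$ (using that $\Acal_L=\bigcup_{A\times A\in\Pcal_L}L^\infty(\Omega;A)$ via~\eqref{AK-alt} and that $L$ being Cartesian convex means its maximal Cartesian squares are exactly the convex sets $B^{\rm co}$). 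Hence $\Acal_K^\infty=\Acal_L$.

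For the converse direction ``$\Acal_K^\infty=\Acal_L\Rightarrow\eqref{condition1}$'': this is essentially the reasoning already sketched in the paragraph following Proposition~\ref{prop:weakstar_closure}, which I would make precise. First, since $\Acal_L=\Acal_K^\infty$ is weakly$^\ast$ closed (being a weak$^\ast$ closure), Proposition~\ref{prop:weakstar_closure} forces $L$ to be Cartesian convex. Next, $\Acal_K\subset\Acal_K^\infty=\Acal_L$ together with~\eqref{order_principle} gives $K=\widehat K\subset\widehat L=L$, hence $K^{\times\rm c}\subset L$ by minimality of the Cartesian convex hull. For the reverse inclusion $L\subset K^{\times\rm c}$: from~\eqref{characterization_inclusionAcal1}, $\Acal_L=\Acal_K^\infty=\bigcup_{B\times B\in\Pcal_K}\Acal_{B^{\rm co}\times B^{\rm co}}\subset\Acal_{K_1^{\times\rm c}}$, so~\eqref{order_principle} yields $L=\widehat L\subset\widehat{K_1^{\times\rm c}}=K_1^{\times\rm c}\subset K^{\times\rm c}$ (using that $K_1^{\times\rm c}$ is symmetric and diagonal). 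Therefore $L=K^{\times\rm c}=K_1^{\times\rm c}$. Finally, to get $\Pcal_L\subset\{B^{\rm co}\times B^{\rm co}:B\times B\in\Pcal_K\}$: given $A\times A\in\Pcal_L$, since $L$ is Cartesian convex, $A$ is convex by Lemma~\ref{lem:alternative}\,$(iii)$; moreover $A\times A\subset L=\bigcup_{B\times B\in\Pcal_K}B^{\rm co}\times B^{\rm co}$, and I would use that each $\Acal_{B^{\rm co}\times B^{\rm co}}\subset\Acal_L$ forces $B^{\rm co}\times B^{\rm co}\subset L$, which combined with maximality of $A\times A$ in $L$ means $A\times A$ must itself be one of the $B^{\rm co}\times B^{\rm co}$, i.e. $A=B^{\rm co}$ for some $B\times B\in\Pcal_K$ — here one invokes that $L_1^{\times\rm c}=L$ so that $\Pcal_L=\Pcal_{\bigcup_{B\times B\in\Pcal_K}B^{\rm co}\times B^{\rm co}}$, and that a union of already-convex Cartesian squares, being itself Cartesian convex (as $L$ is), cannot create new maximal squares beyond the given ones plus possibly larger ones — but any strictly larger one would contradict $L=K_1^{\times\rm c}$ together with the definition of basic convexification applied to $L$ in place of $K$.

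The main obstacle I anticipate is the last step of the converse: carefully justifying that no \emph{hidden} maximal Cartesian square can appear in $L=K_1^{\times\rm c}$ when $\Acal_K^\infty=\Acal_L$. The subtlety is that $\bigcup_{B\times B\in\Pcal_K}B^{\rm co}\times B^{\rm co}$ could in principle contain a hidden Cartesian square $M^{\rm co}\times M^{\rm co}$ (cf.~Example~\ref{ex:hidden}, Example~\ref{ex3}\,b)), and one must show this is incompatible with $\Acal_L=\Acal_K^\infty$. The argument must leverage that $\Acal_K^\infty$ is \emph{exactly} $\bigcup_{B\times B\in\Pcal_K}\Acal_{B^{\rm co}\times B^{\rm co}}$ and not larger: if $M^{\rm co}\times M^{\rm co}\in\Pcal_L$ were hidden, then functions in $L^\infty(\Omega;M^{\rm co})$ taking two values not both lying in a common $B^{\rm co}$ would belong to $\Acal_L$ but, by~\eqref{AK-alt} applied to the structure of $\Acal_K^\infty$ via~\eqref{characterization_inclusionAcal1}, cannot be written with a codomain contained in a single $B^{\rm co}$ — contradicting $\Acal_L\subset\bigcup_{B\times B\in\Pcal_K}L^\infty(\Omega;B^{\rm co})$. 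Making this value-selection argument rigorous (choosing a function realizing two ``incompatible'' values in $M^{\rm co}$) is the crux; everything else is bookkeeping with the order principle and the hull representations from Lemma~\ref{lem:well-definition-Ewsc}.
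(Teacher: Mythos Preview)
Your backward direction and your derivation of $L=K^{\times\rm c}$ in the forward direction are correct and match the paper's reasoning. The genuine gap is in the last step, and it is more serious than you suggest: the two-value selection argument cannot work \emph{in principle}. Indeed, since $A\times A\in\Pcal_L$ and $L=K_1^{\times\rm c}=\bigcup_{B\times B\in\Pcal_K}B^{\rm co}\times B^{\rm co}$, for \emph{any} pair $a_1,a_2\in A$ the point $(a_1,a_2)$ lies in some $B^{\rm co}\times B^{\rm co}$, which means precisely that $a_1,a_2$ lie in a common $B^{\rm co}$. So there are no ``incompatible'' pairs to select, and a two-valued test function will never produce a contradiction.

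What is needed is a test function whose essential range is dense in $A$: if $u$ takes each value of a countable dense subset $D'\subset A$ on a set of positive measure, then $u\in\Acal_L=\Acal_K^\infty$, and~\eqref{characterization_inclusionAcal1} forces $D'\subset B^{\rm co}$ for a \emph{single} $B\times B\in\Pcal_K$; closedness of $B^{\rm co}$ then gives $A\subset B^{\rm co}$, and maximality of $A\times A$ in $L$ (together with $B^{\rm co}\times B^{\rm co}\subset K_1^{\times\rm c}=L$) yields $A=B^{\rm co}$. The paper takes a slightly different route to the same conclusion: rather than a single countably-valued function, it uses a sequence of simple functions $u_i$ with values in the finite sets $D_i=\{d_1,\ldots,d_i\}$, obtains a possibly different $B_i$ for each $i$, and then invokes the Hausdorff-compactness Lemma~\ref{lem:Hausdorffconverg} to extract a single limit $B$ with $D'\subset B^{\rm co}$. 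Either way, the point you are missing is that one must force all (or a dense set) of the values of $A$ into a single $B^{\rm co}$ simultaneously, and this requires more than finitely many test values in general since $A$ need not be a polytope.
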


\begin{proof} 
 If $\Acal_K^\infty=\Acal_{L}$, then
$L= K^{\times \rm c}$ by~\eqref{465}. To show also the second part of~\eqref{condition1}, let $D\subset \R^m$ with $D\times D\in \Pcal_{L}$. 
Moreover, take $D'=\{d_1,d_2, \ldots\}$ to be a countable dense subset of $D$, e.g.~$D'=D\cap \Qbb^m$, and set $D_i:=\{d_1, \ldots, d_i\}$ for $i\in \N$; note that these sets are nested, that is, $D_{i}\subset D_{i+1}$ for $i\in \N$, and satisfy
	$D'= \bigcup_{i\in \N} D_i$. 
	
If we consider for each $i\in \N$ a simple function $u_i:\Omega\to D_i$ such that each preimage $u_i^{-1}(d_j)$ for $j=1, \ldots, i$ has positive Lebesgue measure, then, by construction, $u_i\in \Acal_L =\Acal_K^\infty$ 
and thus, by~\eqref{characterization_inclusionAcal1}, $u_i\in L^\infty(\Omega;B_i^{\rm co})$ for some  $B_i\times B_i \in \Pcal_K$.

Applying Lemma~\ref{lem:Hausdorffconverg} to the sequence of compact sets $(B_i)_i$ gives a compact set $B\subset \R^m$ with $B\times B\subset K$ such that  
\begin{align*}
\sup_{\xi\in D'}\dist (\xi,B^{\rm co}) = \lim_{i\to \infty} \sup_{\xi\in D_i} \dist(\xi, B^{\rm co}) \leq \liminf_{i\to \infty} d_H(B_i^{\rm co}, B^{\rm co}) = 0. 
\end{align*}
Hence, $D'\subset B^{\rm co}$, and taking the closure implies $D\subset B^{\rm co}$ and 
$$
D\times D \subset B^{\rm co}\times B^{\rm co} \subset K^{\times \rm c}.
$$ 
After enlarging $B$, we may assume that  $B\times B\in \Pcal_K$,
and it follows due to the maximality of $D\times D$ in $L= K^{\times \rm c}$ that $D=B^{\rm co}$. 
\smallskip

Now, suppose that~\eqref{condition1} holds. 
Since $L=K^{\times \rm c}$ is Cartesian convex, we infer from  Proposition~\ref{prop:weakstar_closure} that 
$\Acal_L^\infty=\Acal_L$. 
It follows then together with~\eqref{AK-alt},~\eqref{characterization_inclusionAcal1}, and  $K\subset K^{\times \rm c}=L$ that
\begin{align*}
\Acal_L= \bigcup_{D\times D\in \Pcal_L} L^{\infty}(\Omega;D) \subset \bigcup_{B\times B\in \Pcal_K} L^{\infty}(\Omega;B^{\rm co})=\Acal_K^{\infty} \subset \Acal_L^\infty = \Acal_L,
\end{align*}
which yields $\Acal_K^{\infty}=\Acal_L$.   
\end{proof}

Observe that 
the previous findings are
 consistent with~\cite[Theorem~1.1]{KrZ19}, which is actually a special case of Proposition~\ref{prop:structure_inclusions}. Suppose $K\subset \R^m\times \R^m$ is symmetric, diagonal, and compact such that $K^{\rm sc}$ is compact and satisfies
 \begin{align}\label{cond90}
\widehat{K^{\rm sc}} = \bigcup_{(\alpha, \beta)\in K} [\alpha, \beta]\times [\alpha, \beta] \quad \text{and}\quad \Pcal_{\widehat{K^{\rm sc}}}= \Pcal_{K^{\rm sc}} \subset \{[\alpha, \beta]\times [\alpha, \beta]: \{\alpha, \beta\}\times \{\alpha, \beta\}\in K\}.
 \end{align}
Under these assumptions, one can mimic the one-dimensional argument of~Remark~\ref{rem:comparison}\,a) to derive the inclusion $\widehat{K^{\rm sc}}\subset K^{\times \rm c}_1$. Together with $K^{\times \rm c}\subset \widehat{K^{\rm sc}}$, it follows that
\begin{align}\label{98-}
\widehat{K^{\rm sc}} = K_1^{\times \rm c} = K^{\times \rm c}.
\end{align}
Since the second part of~\eqref{cond90} in combination with~\eqref{98-} implies that  the Cartesian convex hull of $K$ is basic, we can apply 
 Proposition~\ref{prop:structure_inclusions} to infer $\Acal_K^\infty = \Acal_{K_1^{\times \rm c}}$, and hence, 
	\begin{align*}
	\Acal_K^\infty = \Acal_{\widehat{K^{\rm sc}}}= \Acal_{K^{\rm sc}},
	\end{align*}
	as stated in \cite[Theorem~1.1]{KrZ19}.\smallskip

We conclude our discussion of nonlocal inclusions with an alternative representation of $\Acal_K^\infty$ via barycenters of Young measures. It serves as a useful technical tool in the final section of this paper on $L^p$-approximation.

\begin{remark}[Young measure representation of \boldmath{$\Acal_K^\infty$}]\label{rem:YM_representation}  
For any $K\subset \R^m\times \R^m$ symmetric and compact, let 
\begin{align}\label{Vcal_K}
\Vcal_{K} &:= \{\nu\in L_w^\infty(\Omega;\mathcal Pr(\R^m)): \supp (\nu_x\otimes \nu_y) \subset K \text{ for a.e.~$(x,y)\in \Omega\times \Omega$}\}.\end{align}
We claim that 
\begin{align}\label{YMAkinfty}
\Acal_K^\infty = \{u\in L^\infty(\Omega;\R^m): u=[\nu], \nu\in \Vcal_K\},
\end{align}
or in other words,
\begin{align}\label{equivalenceVcal}
\nu\in \Vcal_K \qquad\text{if and only if} \qquad [\nu]\in \Acal_K^\infty. 
\end{align}
This characterization of $\Acal_{K}^\infty$ is essentially a reformulation of the results in~\cite[Theorems 3.3 and 3.1]{Kreisbeck-Zappale-2019Loss}.  
A sketch of the corner points of the derivation is included below for the reader's convenience; for further details, see~\cite[Section 3]{Kreisbeck-Zappale-2019Loss}. As for notation, we associate with every $u\in L^{\infty}(\Omega;\R^m)$, the nonlocal vector field
\begin{align}\label{eq:v_u}
v_u(x, y):= (u(x), u(y)) \qquad\text{ for $(x,y) \in \Omega\times \Omega$. }
\end{align}

The principal identity behind~\eqref{YMAkinfty} and~\eqref{equivalenceVcal} is 
\begin{align}\label{Ycal}
\Ycal_K^\infty=\Ycal_K
\end{align} 
from \cite[Theorem 3.3]{Kreisbeck-Zappale-2019Loss}, where
\begin{align*}
\Ycal_K^{\infty}=\{ \Lambda \in L_w^\infty(\Omega\times \Omega;\mathcal Pr(\R^m\times \R^m))\colon & v_{u_j}\stackrel{\rm YM\ }{\longrightarrow}\Lambda, \text{ with } (u_j)_j \subset \Acal_K\}
\end{align*}
and
\begin{align*}
\Ycal_K&=\{ \Lambda \in L_w^\infty(\Omega\times \Omega;\mathcal Pr(\R^m\times \R^m))\colon \Lambda_{(x,y)}= \nu_x\otimes \nu_y \text{ with } \nu \in  L_w^\infty(\Omega;\mathcal Pr(\R^m)),\\
&\hspace{5.6cm} \supp (\nu_x\otimes \nu_y) \subset K \text{ a.e.~$(x,y)\in \Omega\times \Omega$} \}\\ 
&=\{ \Lambda \in L_w^\infty(\Omega\times \Omega;\mathcal Pr(\R^m\times \R^m))\colon \Lambda_{(x,y)}= \nu_x\otimes \nu_y \text{ with } \nu \in  \Vcal_K\}.
\end{align*}
The proof of~\eqref{Ycal} relies on characterization results of Young measures generated by sequences of both exact and approximate solutions to the nonlocal inclusion problem, together with an approximating argument for elements in $\Ycal_K$ by suitable inhomogeneous convex combinations of Dirac masses. 

By taking barycenters, we obtain with the same reasoning as in the proof of~\cite[Theorem 3.1]{Kreisbeck-Zappale-2019Loss} that
\begin{align*}
\Acal_K^{\infty}&=\{ u\in L^{\infty}(\Omega;\R^m) \colon v_u = [\Lambda], \, \Lambda \in \Ycal_K^{\infty} \}=\{ u\in L^{\infty}(\Omega;\R^m) \colon v_u = [\Lambda], \, \Lambda \in \Ycal_K\}\\
&=\{ u\in L^{\infty}(\Omega;\R^m) \colon u= [\nu], \, \nu \in \Vcal_K\},
\end{align*}
as stated~\eqref{YMAkinfty}.
\end{remark}

\section{Lower semicontinuity and relaxation of nonlocal supremals}\label{sec:lower-rlx}

In this section, we consider nonlocal supremal functionals of the form $J_W$ as defined in \eqref{es}, with a lower semicontinuous and coercive supremand $W:\R^m\times \R^m\to \R$. 
Since $J_{W}= J_{\widehat{W}}$, that is, 
\begin{equation}\label{eq:J=Jhat}
\esssup_{(x, y)\in \Omega\times \Omega} W(u(x), u(y))=  \esssup_{(x, y)\in \Omega\times \Omega} \widehat{W}(u(x), u(y))
\end{equation}
for every $u\in L^{\infty}(\Omega;\R^m)$ (see~\cite[(7.3)]{KrZ19}), we may assume without loss of generality that $W$ is also symmetric and diagonal. 

With Proposition~\ref{prop:weakstar_closure} from the previous section at hand, we are now in the position to prove Theorem~\ref{theo:charact-wls}, which identifies Cartesian level convexity of the supremand as necessary and sufficient for the weak$^\ast$ lower semicontinuity of nonlocal $L^\infty$-functionals. 
Considering that the notions of Cartesian level convexity and separate level convexity coincide in the scalar case (cf.~Remark~\ref{rem:comparison}\,a)), this generalizes the findings of~\cite[Theorem 1.3\,$(i)$]{KrZ19} to arbitrary dimensions. 

\begin{proof}[Proof of Theorem \ref{theo:charact-wls}] By~\cite[Proposition~7.1]{KrZ19}, the $L^\infty$-weak$^\ast$  lower semicontinuity of $J_W$ is equivalent to the $L^\infty$-weak$^\ast$ closedness of $\Acal_{L_c(W)}$ for any $c\in \R$. Since all sublevel sets of $W$ are symmetric, diagonal, and compact in light of the assumptions on $W$, the stated equivalence is an immediate consequence of Proposition~\ref{prop:weakstar_closure}.
\end{proof}

When $J_{W}$ fails to be weakly$^*$ lower semicontinuous, finding a good representation of its weakly$^*$ lower semicontinuous envelope (see~\eqref{JW_rlx}) is a central question in relaxation theory. 
Our next auxiliary result establishes the connection between the relaxation of nonlocal supremal functionals and the $L^\infty$-weak$^\ast$ closure of their supremands' sublevel sets; it can be viewed as an extension of~\cite[Proposition~7.1]{KrZ19}.

\begin{lemma}\label{lem:repres_relaxation_abstract}
Let $W:\R^m\times \R^m\to \R$ be symmetric, diagonal, lower semicontinuous, and coercive, and let $I: L^\infty(\Omega;\R^m)\to \R$ 
be a functional. Then, the following identities are equivalent:
\begin{itemize}
\item[$(i)$] $I=J_W^{\rm rlx};$\\[-0.3cm]
\item[$(ii)$] $L_c(I)=\Acal_{L_{c}(W)}^\infty$ for all $c\in \R$;  \\[-0.3cm]
\item[$(iii)$] $I(u) = \inf\{c\in \R: u\in \Acal_{L_c(W)}^\infty\}$. 
\end{itemize}
\end{lemma}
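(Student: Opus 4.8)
The plan is to prove the cycle $(ii)\Leftrightarrow(iii)$ as a purely formal consequence of the definitions of sublevel set and of the family $\{\Acal_{L_c(W)}^\infty\}_{c\in\R}$, and then to establish $(i)\Leftrightarrow(ii)$, which is the substantive part. The equivalence $(ii)\Leftrightarrow(iii)$ follows from the general principle~\eqref{levelset-i}--\eqref{level-sets-ii}: once I know that $(F_c)_{c\in\R}:=(\Acal_{L_c(W)}^\infty)_{c\in\R}$ is a monotone increasing family satisfying $\bigcap_{j}F_{c_j}=F_c$ for every decreasing sequence $c_j\to c$, then $I$ defined by $(iii)$ is exactly the function whose sublevel sets are the $F_c$, i.e.\ $(ii)$ holds; conversely $(ii)$ plus~\eqref{levelset-i} gives $(iii)$. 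Monotonicity is clear since $L_c(W)\subset L_{c'}(W)$ for $c\le c'$ implies $\Acal_{L_c(W)}\subset\Acal_{L_{c'}(W)}$ and hence the same for the weak$^\ast$ closures. The continuity-from-above property $\bigcap_j\Acal_{L_{c_j}(W)}^\infty=\Acal_{L_c(W)}^\infty$ needs a short argument: ``$\supset$'' is immediate from monotonicity, and for ``$\subset$'' I would use the representation~\eqref{characterization_inclusionAcal1}, $\Acal_{K}^\infty=\bigcup_{B\times B\in\Pcal_K}L^\infty(\Omega;B^{\rm co})$, together with the fact (Lemma~\ref{lem:Hausdorffconverg}, already invoked several times in this style) that from maximal Cartesian squares $B_j\times B_j\subset L_{c_j}(W)$ one extracts a Hausdorff-convergent subsequence whose limit $B\times B$ satisfies $B\times B\subset\bigcap_j L_{c_j}(W)=L_c(W)$ by lower semicontinuity of $W$; then any $u$ lying in all the $\Acal_{L_{c_j}(W)}^\infty$ takes values in a fixed $B^{\rm co}$ and hence lies in $\Acal_{L_c(W)}^\infty$.

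For $(i)\Leftrightarrow(ii)$, the natural route is to show directly that $L_c(J_W^{\rm rlx})=\Acal_{L_c(W)}^\infty$ for every $c\in\R$; since a functional is determined by its sublevel sets via~\eqref{levelset-i} and $J_W^{\rm rlx}$ already satisfies the continuity-from-above property of a relaxation, this gives $(i)\Rightarrow(ii)$, and conversely $(ii)$ forces $L_c(I)=L_c(J_W^{\rm rlx})$ for all $c$, hence $I=J_W^{\rm rlx}$. To identify $L_c(J_W^{\rm rlx})$: first note $L_c(J_W)=\Acal_{L_c(W)}$, because $J_W(u)\le c$ iff $W(u(x),u(y))\le c$ for a.e.\ $(x,y)$ (here one uses that $W$ is lower semicontinuous so the essential supremum is attained as a pointwise sup on the support, cf.~\eqref{ess-supp}, and that $L_c(W)$ is closed). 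Then I would argue that $u\in L_c(J_W^{\rm rlx})$ iff there is a sequence $u_j\weaklystar u$ with $\limsup_j J_W(u_j)\le c$: the ``if'' direction is by definition of $J_W^{\rm rlx}$, and the ``only if'' uses a diagonal/approximation argument over levels $c+1/k$ together with the fact (from~\cite[Proposition~7.1]{KrZ19}, whose proof pattern is being extended here) that $J_W^{\rm rlx}$ is itself a nonlocal-supremal-type lower semicontinuous envelope whose sublevel sets are weak$^\ast$ closed. Such a sequence can be taken, after truncation, with $J_W(u_j)\le c+\eps_j$, i.e.\ $u_j\in\Acal_{L_{c+\eps_j}(W)}$; passing to the limit and using the continuity-from-above of $(\Acal_{L_{c'}(W)}^\infty)_{c'}$ established above yields $u\in\Acal_{L_c(W)}^\infty$. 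Conversely, if $u\in\Acal_{L_c(W)}^\infty$ then by definition $u=\wslim u_j$ with $u_j\in\Acal_{L_c(W)}=L_c(J_W)$, so $J_W(u_j)\le c$ and hence $J_W^{\rm rlx}(u)\le c$.

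The step I expect to be the main obstacle is the careful passage between ``$J_W^{\rm rlx}(u)\le c$'' and membership in the weak$^\ast$ closure $\Acal_{L_c(W)}^\infty$ \emph{at the critical level} $c$ rather than at levels $c+\eps$: a recovery sequence for $J_W^{\rm rlx}(u)$ only guarantees $\limsup J_W(u_j)\le c$, i.e.\ $J_W(u_j)\le c+\eps_j$ with $\eps_j\to 0$, so the $u_j$ solve a \emph{slightly enlarged} inclusion; closing the gap requires the continuity-from-above of the family $c'\mapsto\Acal_{L_{c'}(W)}^\infty$, which is precisely why that property (and the underlying compactness/Hausdorff-convergence lemma applied to maximal Cartesian squares) must be proved first. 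All other steps — monotonicity, the formal equivalence $(ii)\Leftrightarrow(iii)$, and the easy inclusion $\Acal_{L_c(W)}^\infty\subset L_c(J_W^{\rm rlx})$ — are routine given the representation~\eqref{characterization_inclusionAcal1} and the order principle~\eqref{order_principle}.
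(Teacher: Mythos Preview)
Your proposal is correct and follows essentially the same route as the paper: the continuity-from-above property $\bigcap_j\Acal_{L_{c_j}(W)}^\infty=\Acal_{L_c(W)}^\infty$ that you single out as the crux is exactly Lemma~\ref{lem:intersections_AKeps}, proved via Lemma~\ref{lem:Hausdorffconverg} just as you sketch, and the passage from the recovery sequence at levels $c+\eps$ to the critical level $c$ is handled in the paper precisely by invoking that lemma. The only cosmetic difference is that you establish $L_c(J_W^{\rm rlx})=\Acal_{L_c(W)}^\infty$ once and read off both implications, whereas the paper writes out $(ii)\Rightarrow(i)$ and $(i)\Rightarrow(ii)$ separately.
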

\begin{proof} 
The proof is straightforward, nevertheless, we detail the arguments here for the reader's convenience.\smallskip

$(ii)\Rightarrow (i)$
To prove that $I$ is a lower bound on the relaxation of $J_W$, consider $(u_j)_j\subset L^\infty(\Omega;\R^m)$ such that $u_j\weaklystar u$ in $L^\infty(\Omega;\R^m)$, assuming without loss of generality that 
\begin{align*}
\liminf_{j\to \infty}J_W(u_j) = \lim_{j\to \infty}J_W(u_j)=:c<\infty. 
\end{align*}
Fixing $\eps>0$, we find  $(u_j)_j\subset\Acal_{L_{c+\eps}(W)}$ for $j$ sufficiently large, and thus, $u\in \Acal^\infty_{L_{c+\eps}(W)}$, so that $I(u)\leq c+\eps = \liminf_{j\to \infty} J_W(u_j) + \eps$.
Since $(u_j)_j$ is an arbitrary weakly$^\ast$ converging sequence with limit $u$, taking the limit $\eps\to 0$ gives $I(u)\leq J_W^{\rm rlx}(u)$. 

For the reverse inequality, we observe that any $u\in L^\infty(\Omega;\R^m)$ satisfies $u\in L_{I(u)}(I)= \Acal_{L_{I(u)}(W)}^{\infty}$ and can therefore be approximated in the sense of $L^\infty$-weak$^\ast$ convergence by a sequence $(u_j)_j\subset \Acal_{L_{I(u)}(W)}$, so that
\begin{align*}
J^{\rm rlx}_W(u) \leq \limsup_{j\to \infty}J_W(u_j) =\limsup_{j\to \infty} \esssup_{(x,y)\in \Omega\times \Omega} W(u_j(x), u_j(y)) \leq I(u).
\end{align*}

$(i)\Rightarrow (ii)$ We show first that any $u\in L_c(I) = L_c(J_W^{\rm rlx})$ for $c\in \R$ is the weak$^\ast$ limit of a sequence in $\Acal_{L_c(W)}$. To that end, let $\eps>0$ and take a recovery sequence $(u_j)_j\subset L^\infty(\Omega;\R^m)$ for $u$. Then, $u_j\weaklystar u$ in $L^\infty(\Omega;\R^m)$ and $\lim_{j\to \infty} J_W(u_j)=J_W^{\rm rlx}(u)\leq c$. The latter implies $u_j\in\Acal_{L_{c+\eps}(W)}$ for all $j\in \N$ sufficiently large. Hence, $u\in \Acal_{L_{c+\eps}(W)}^\infty$,  and since $\eps>0$ was arbitrary, $u\in \Acal_{L_{c}(W)}^\infty$ by Lemma~\ref{lem:intersections_AKeps}. 

On the other hand, if $u\in \Acal_{L_c(W)}^\infty$ with $u_j\weaklystar u$ in $L^\infty(\Omega;\R^m)$ and $(u_j)_j\subset \Acal_{L_c(W)}$, then 
\begin{align*}
c\geq \liminf_{j\to \infty} J_W(u_j) \geq J_W^{\rm rlx}(u) = I(u),
\end{align*}
meaning $u\in L_c(I)$.\smallskip

 $(ii)\Leftrightarrow (iii)$ This follows from~\eqref{levelset-i} and~\eqref{level-sets-ii} while taking Lemma~\ref{lemma:basic-topo} into account. 
\end{proof}

By combining the statement of the previous lemma with the characterization of weak$^\ast$ closures of nonlocal inclusions in~\eqref{characterization_inclusionAcal1}, we infer that
\begin{align*}
J_W^{\rm rlx}(u)  & =  \inf\{c \in \R \colon u\in L^\infty(\Omega;B^{\rm co})\ \text{with $B\times B\in \Pcal_{ L_c(W)}$}\}
\end{align*}
for $u\in L^\infty(\Omega;\R^m)$. Even if this provides 
a new and simpler representation of the relaxation of $J_W$, it is not obvious from this formula
 whether (and - if not in general - under what assumptions) $J_W^{\rm rlx}$ corresponds again to a supremal functional of the form~\eqref{es}. We give a characterization result about when $J_W^{\rm rlx}$ is structure-preserving in the next section.

\subsection{Structure preservation under relaxation} 
As a consequence of Theorem~\ref{theo:charact-wls}, we can immediately deduce a lower bound on the relaxation of $J_W$ as in~\eqref{es} in the form of another nonlocal supremal of the same type, precisely,
\begin{align}\label{lb_JWrlx}
J_W^{\rm rlx}\geq J_{{\widehat{W}}^{\times \rm lc}},  
\end{align} 
where ${\widehat{W}}^{\times \rm lc}$ is the Cartesian level convex envelope of $\widehat{W}$, see Definition~\ref{def:wslc}. \color{black}
	
Our second main result of the paper, formulated in Theorem~\ref{theo:relaxation}, characterizes the conditions on $W$ under which equality holds in~\eqref{lb_JWrlx}. It is a direct implication of the following proposition, which again relies on the characterization of weak$^\ast$ closures of nonlocal inclusions from  Proposition~\ref{prop:structure_inclusions}.

\begin{proposition}\label{prop:structure-preservation}
	Let $W:\R^m\times \R^m\to \R$ be symmetric, diagonal, lower semicontinuous, and coercive.   \color{black}
Then, the following two statements are equivalent: 
\begin{itemize}
\item[$(i)$]  all sublevel sets of $W$ admit a basic Cartesian convexification;\\[-0.3cm]
\item[$(ii)$] there exists a symmetric, diagonal, lower semicontinuous, and coercive function $G:\R^m\times \R^m\to \R$ such that $J_W^{\rm rlx} = J_G$.
\end{itemize}
If these conditions are fulfilled, then $J_W^{\rm rlx}= J_{W^{\times \rm lc}}$.
\end{proposition}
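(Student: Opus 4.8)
The plan is to transfer Proposition~\ref{prop:structure-preservation} to the level of sublevel sets and then invoke the inclusion-theoretic results of Section~\ref{sec:inclusions}. Two preliminary observations set the stage. First, since $W$ is symmetric, diagonal, lower semicontinuous, and coercive, each sublevel set $K_c:=L_c(W)$ is symmetric, diagonal, and compact, so that Lemma~\ref{lem:aux1} and Proposition~\ref{prop:structure_inclusions} apply to $K_c$ for every $c\in\R$. Second, for \emph{any} symmetric, diagonal, lower semicontinuous, and coercive supremand $G\colon\R^m\times\R^m\to\R$ one has $L_c(J_G)=\Acal_{L_c(G)}$ for all $c\in\R$, because $J_G(u)\le c$ is equivalent to $G(u(x),u(y))\le c$ for a.e.\ $(x,y)\in\Omega\times\Omega$.

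To prove $(i)\Rightarrow(ii)$ together with the final formula, I would take $G:=W^{\times \rm lc}$. Under assumption $(i)$, Lemma~\ref{lem:W-wslc} tells us that $G$ is symmetric, diagonal, coercive, and \emph{lower semicontinuous}, and moreover $L_c(G)=K_c^{\times \rm c}$ for all $c$. Combining the second observation above with Proposition~\ref{prop:structure_inclusions} (applicable since each $K_c$ has a basic Cartesian convexification by $(i)$) then yields
\[
L_c(J_G)=\Acal_{L_c(G)}=\Acal_{K_c^{\times \rm c}}=\Acal_{K_c}^{\infty}\qquad\text{for all }c\in\R .
\]
By the equivalence $(ii)\Leftrightarrow(i)$ of Lemma~\ref{lem:repres_relaxation_abstract}, applied with $I=J_G$, this is exactly $J_G=J_W^{\rm rlx}$, which establishes $(ii)$ and simultaneously identifies the relaxation as $J_W^{\rm rlx}=J_{W^{\times \rm lc}}$.

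For the converse $(ii)\Rightarrow(i)$, I would start from a supremand $G$ as in $(ii)$, so that $J_G=J_W^{\rm rlx}$ and $L_c(G)$ is symmetric, diagonal, and compact for every $c$. Applying Lemma~\ref{lem:repres_relaxation_abstract} to $I=J_G$ and using the identity $L_c(J_G)=\Acal_{L_c(G)}$ gives $\Acal_{L_c(G)}=\Acal_{K_c}^{\infty}$ for every $c\in\R$. Lemma~\ref{lem:aux1} then forces both $L_c(G)=K_c^{\times \rm c}$ and $\Pcal_{L_c(G)}\subset\{B^{\rm co}\times B^{\rm co}:B\times B\in\Pcal_{K_c}\}$; substituting the first identity into the second gives $\Pcal_{K_c^{\times \rm c}}\subset\{B^{\rm co}\times B^{\rm co}:B\times B\in\Pcal_{K_c}\}$, which is precisely Definition~\ref{def:condition(S)} for $K_c=L_c(W)$. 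Since $c\in\R$ was arbitrary, $(i)$ follows.

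Most of the genuine work is carried out in Section~\ref{sec:inclusions}, so the proposition is largely a matter of bookkeeping; the one spot needing care is ensuring that every set we feed into Lemma~\ref{lem:aux1} and Proposition~\ref{prop:structure_inclusions} is genuinely symmetric, diagonal, and compact (hence the insistence on $G$ being coercive and lower semicontinuous in $(ii)$), and — in the forward direction — relying on Lemma~\ref{lem:W-wslc} precisely to secure the lower semicontinuity of $W^{\times \rm lc}$ that makes $G$ an admissible supremand. I do not expect a real obstacle beyond this verification.
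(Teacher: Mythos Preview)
Your proposal is correct and follows essentially the same route as the paper: in both directions you reduce to the sublevel-set identities $L_c(J_G)=\Acal_{L_c(G)}$ and $L_c(J_W^{\rm rlx})=\Acal_{L_c(W)}^\infty$ from Lemma~\ref{lem:repres_relaxation_abstract}, and then invoke Proposition~\ref{prop:structure_inclusions} (forward) and Lemma~\ref{lem:aux1} (backward). The only cosmetic difference is that the paper routes through the infimum characterization $(iii)$ of Lemma~\ref{lem:repres_relaxation_abstract} rather than the sublevel-set characterization $(ii)$ you use, but the content is identical.
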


\begin{proof}
  $(i)\Rightarrow (ii)$
 The Cartesian level convexification $W^{\times \rm lc}$ is symmetric, diagonal, lower semicontinuous, and coercive, and satisfies
\begin{align*}
L_c(W^{\times \rm lc}) = L_c(W)^{\times \rm c}\quad \text{ for all $c\in \R$,}
\end{align*} 
according to Lemma~\ref{lem:W-wslc}. 
Moreover, we infer  from Propositions~\ref{lem:repres_relaxation_abstract} and~\ref{prop:structure_inclusions} (see also~\eqref{levelset-i}) that
\begin{align*}
J_W^{\rm rlx}(u)& = \inf\{c\in\R: u\in \Acal_{L_c(W)}^\infty\} =  \inf\{c\in\R: u\in \Acal_{L_c(W)^{\times \rm c}}\}   \nonumber \\ 
&  =  \inf\{c\in\R: u\in \Acal_{L_c(W^{\times \rm lc})}\} = \inf\{c\in \R: u\in L_c(J_{W^{\times \rm lc}})\}  =  J_{W^{\times \rm lc}}(u) 
\end{align*} for $u\in L^\infty(\Omega;\R^m)$. 
Hence, setting $G=W^{\times \rm lc}$ proves $(ii)$. \smallskip

 $(ii)\Rightarrow (i)$ 
As $(ii)$ holds, we can use once again Proposition~\ref{lem:repres_relaxation_abstract} (and~\eqref{levelset-i}) to obtain
\begin{align*}
J_{G}(u)= J_W^{\rm rlx}(u) =\inf\{c\in\R: u\in \Acal_{L_c(W)}^\infty\} 
\end{align*}
and consequently, 
\begin{align*}
\Acal_{L_c(W)}^\infty = L_c(J_G)= \Acal_{L_c(G)} \quad \text{for every $c\in \R$}
\end{align*} 
by~\eqref{level-sets-ii} under consideration of Lemma~\ref{lem:intersections_AKeps}. Now, the claim follows  immediately from  Lemma~\ref{lem:aux1}. 
\end{proof}

Summing up, we have seen that, in general, $J_W^{\rm rlx}$ is not a supremal functional of the form~\eqref{es}. The next section discusses different (counter)examples of representation formulas in cases when $W$ are specific multi-well supremands.

\subsection{Relaxation of nonlocal supremals in multi-well form}\label{subsec:multiwell}

The intention of this section is to illustrate the previous results, using the example of 
$L^\infty$-functionals
$$
J_W(u):=\esssup_{(x, y)\in \Omega\times \Omega} W(u(x),u(y))\quad \text{for $u\in L^{\infty}(\Omega;\R^m)$ }
$$  
with multi-well supremands of the form
\begin{align*}
W=\min_{i=1, \ldots, N} W_i,
\end{align*}
where $W_i:\R^m\times \R^m\to \R$ is symmetric, diagonal, lower semicontinuous and coercive for $i=1, \ldots, N$ with $N\in \N$.

Below, we will often specify this setting further by choosing $W_i$ as single-well functions  
 \begin{align}\label{singlewellW}
 W_i(\xi, \zeta)=\dist\bigl((\xi, \zeta), A_i\times A_i\bigr) \quad \text{for $(\xi, \zeta)\in \R^m\times \R^m$}
 \end{align} 
 with compact sets $A_i\subset \R^m$ such that~\eqref{ass12} holds.
Moreover, for $(\xi, \zeta)\in \R^m\times \R^m$,
\begin{align}\label{W2}
 W(\xi, \zeta)=\dist\bigl((\xi, \zeta), K\bigr) \quad \text{ with $K:=\bigcup_{i=1}^N A_i\times A_i$. }
\end{align}
Observe for the sublevel sets that \begin{align*}
L_c(W) = \bigcup_{i=1}^N (A_i)_c\times (A_i)_c= \bigcup_{i=1}^N L_c(W_i) 
\end{align*}
with $c\in \R$ and $A_c:=A+B_c(0)$ for $A\subset \R^m$. 

Before we focus on relaxation formulas with and without structure preservation (see Example~\ref{ex:preservation} and~\ref{ex:counterexample}), let us provide the following basic representation for general multi-well $L^\infty$-functionals. 

\begin{lemma}\label{lem:min}
Let $W_i:\R^m\times \R^m\to \R$ for $i=1, \ldots, N$ with $N\in \N$ be symmetric, diagonal, lower semicontinuous, and coercive, and suppose that $W=\min_{i=1, \ldots, N} W_i$ satisfies 
\begin{equation}\label{assm}
\Pcal_{L_c(W)} \subset \bigcup_{i=1}^N\Pcal_{L_c(W_i)} \qquad \text{for all $c\in \R$}. 
\end{equation}
Then, 
\begin{align}\label{min44} 
J_W= \min_{i=1, \ldots, N} J_{W_i}.
\end{align}
\end{lemma}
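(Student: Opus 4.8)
The plan is to establish the two inequalities $J_W\le\min_{i}J_{W_i}$ and $J_W\ge\min_i J_{W_i}$ pointwise on $L^\infty(\Omega;\R^m)$. The first is immediate and uses no hypothesis: since $W=\min_{i=1,\dots,N}W_i\le W_i$ pointwise on $\R^m\times\R^m$, for any $u\in L^\infty(\Omega;\R^m)$ one has $W(u(x),u(y))\le W_i(u(x),u(y))$ for a.e.\ $(x,y)\in\Omega\times\Omega$, and taking essential suprema gives $J_W(u)\le J_{W_i}(u)$ for every $i$, hence $J_W(u)\le\min_i J_{W_i}(u)$.

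For the converse I would fix $u\in L^\infty(\Omega;\R^m)$ and set $c:=J_W(u)$, assuming without loss of generality that $c<\infty$ (otherwise there is nothing to prove). First I would record that $L_c(W)=\bigcup_{i=1}^N L_c(W_i)$ is symmetric, diagonal, and compact: symmetry and diagonality pass from the $W_i$ to $W$ (if $W_i(\xi,\zeta)\le c$ then $W_i(\zeta,\xi),W_i(\xi,\xi),W_i(\zeta,\zeta)\le c$ as well), while compactness follows from lower semicontinuity and coercivity of each $W_i$ together with the finiteness of the union. Since $W(u(x),u(y))\le c$ for a.e.\ $(x,y)$, we have $u\in\Acal_{L_c(W)}$, so the representation~\eqref{AK-alt} provides a maximal Cartesian square $B\times B\in\Pcal_{L_c(W)}$ with $u\in L^\infty(\Omega;B)$. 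By the hypothesis~\eqref{assm}, $B\times B\in\Pcal_{L_c(W_i)}$ for some $i\in\{1,\dots,N\}$; in particular $B\times B\subset L_c(W_i)$, whence $(u(x),u(y))\in L_c(W_i)$ for a.e.\ $(x,y)$, i.e.\ $J_{W_i}(u)\le c=J_W(u)$. Therefore $\min_i J_{W_i}(u)\le J_W(u)$, and combining with the first step yields~\eqref{min44}.

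The argument is essentially a bookkeeping exercise chaining together~\eqref{AK-alt} and~\eqref{assm}, so I do not anticipate a serious obstacle. The only point requiring a moment's care is the verification that $L_c(W)$ is symmetric, diagonal, and compact, which is exactly what makes~\eqref{AK-alt} (and the notion $\Pcal_{L_c(W)}$) applicable; this is precisely where the standing assumptions that each $W_i$ is symmetric, diagonal, lower semicontinuous, and coercive are used. Everything else is a direct reading of the set-valued identities already at our disposal.
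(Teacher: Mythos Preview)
Your proof is correct and follows essentially the same approach as the paper: both arguments use the representation~\eqref{AK-alt} together with the hypothesis~\eqref{assm} to show $J_W(u)\ge\min_iJ_{W_i}(u)$, and both dispatch the reverse inequality trivially from $W\le W_i$. The only cosmetic difference is that the paper phrases the key step through the infimum formula $J_W(u)=\inf\{c:u\in\Acal_{L_c(W)}\}$ (cf.~\eqref{levelset-i}), whereas you set $c:=J_W(u)$ directly and argue from $u\in\Acal_{L_c(W)}$; the content is identical.
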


\begin{proof}
Since the sublevel sets of $W$ are given by
 $L_c(W) = \bigcup_{i=1}^N L_c(W_i)$ for $c\in \R$, it is immediate to check that  the properties of the functions $W_i$ carry directly over to $W$, that is, also $W$ is symmetric, diagonal, lower semicontinuous, and coercive.
Considering~\eqref{levelset-i}, \eqref{AK-alt}, and~\eqref{assm}, this can be seen like this: For any $u\in L^\infty(\Omega;\R^m)$,
\begin{align*}
J_W(u) &=\inf\bigl\{c\in \R:u\in \Acal_{L_c(W)}\bigr\} =\textstyle \inf\bigl\{c\in \R: u\in \bigcup_{B\times B\in \Pcal_{L_c(W)}}L^\infty(\Omega;B)\bigr\} \\ &
\geq  \textstyle\inf\{c\in \R: u\in \bigcup_{B\times B\in \Pcal_{L_c(W_i)}} L^\infty(\Omega;B) \text{ for some $i\in \{1, \ldots, N\}$}\} \\ &
=  \min_{i=1, \ldots, N} \textstyle \inf\{c\in \R: u\in \bigcup_{B\times B\in \Pcal_{L_c(W_i)}} L^\infty(\Omega;B)\}\\ &
= \min_{i=1, \ldots, N} \inf\{c\in \R: u\in \Acal_{L_c(W_i)}\}  =  \min_{i=1, \ldots, N}J_{W_i}(u). 
\end{align*}
The reverse inequality is trivially satisfied due to $W\leq W_i$ for all $i$.
\end{proof}

Notice that the assumption~\eqref{assm} is necessary for proving~\eqref{min44} for $N>2$, as we show below for three-well supremands $W$. In particular, this highlights what implications the intrinsic non-uniqueness in the representation of $W$ as minimum of the functions $W_i$ has for the representation of $J_W$ and illustrates the role of hidden squares of the sublevel sets of $W$.

\begin{example}\label{ex:W=min} 
Let $W$ and $W_i$ for $i=1, \ldots, N$ be as in~\eqref{W2} and~\eqref{singlewellW}.\smallskip

a) In case $W$ is a double-well supremand, which corresponds to $N=2$, one always has $J_W=\min_{i=1,2}J_{W_i}$.
 Indeed, the condition~\eqref{assm} is satisfied here, as
	 Conclusion~\ref{rem:hidden-squares}\,b) implies  
	$$
	\Pcal_{L_c(W)} \subset \{(A_1)_c\times (A_1)_c\} \cup \{(A_2)_c\times (A_2)_c\} =  \Pcal_{L_c(W_1)} \cup  \Pcal_{L_c(W_2)}  
	$$
for all $c\in \R$.\smallskip

	b) If $N=3$, meaning that $W$ is a three-well supremand, it is in general not true that $J_W=\min_{i=1,2,3} J_{W_i}$. This observation is tied to the occurrence of hidden maximal Cartesian squares, which can affect the structure of nonlocal supremal functionals. Taking for instance compact sets $A_1, A_2, A_3 \subset \R^m$ as in Example~\ref{ex:hidden}, then $M\times M\in \Pcal_K=\Pcal_{L_0(W)}$ with $M$ defined in~\eqref{eqM}, but $M\times M\notin \Pcal_{L_0(W_i)} = \{A_i\times A_i\}$ for any $i=1,2,3$.

However, we will prove that it is possible to express $W$ as the minimum of four functions so that the condition~\eqref{assm} is satisfied. To this aim, consider the function $W_4:\R^m\times \R^m\to \R$ defined for $(\xi, \zeta)\in \R^m\times \R^m$ by
	\begin{equation}\label{W4}
	W_4(\xi,\zeta) =\inf \{c\in \R \colon (\xi,\zeta)\in M^c\times M^c \}
	\end{equation}
	with
\begin{equation}\label{Hc}
M^c := \bigcup_{i,j=1,2,3, i\neq j} (A_i)_c\cap (A_j)_c \quad \text{ for all } c\in \R.
\end{equation} 
Then, $L_c(W_4)= M^c\times M^c  \subset  L_c(W)=\bigcup_{i=1}^3 L_c(W_i)$ in view of~\eqref{level-sets-ii} implies $W\le W_4$ and thus,
	\begin{align}\label{4!}
		W=\min_{i=1, \ldots, 4} W_i.
	\end{align} 
By Lemma~\ref{prop:hidden-squares},  it is straightforward to verify that~\eqref{assm} is indeed fulfilled for this representation of $W$, 
and we conclude from Lemma~\ref{lem:min} that
	\begin{align*}
		J_W=\min_{i=1,2, 3,4}J_{W_i}.
	\end{align*}
	\vspace{-0.7cm}

	\end{example}

In view of Lemma~\ref{lem:min}, it is  not hard to see that the relaxation of $J_{W}$ with $W=\min_{i=1, \ldots, N} W_i$ satisfying~\eqref{assm} can be obtained via minimization over the relaxed supremals associated to the more basic functions $W_i$, that is,
\begin{align}\label{128}
J_W^{\rm rlx} = \min_{i=1, \ldots, N} J_{W_i}^{\rm rlx}.
\end{align} 

Indeed, as a consequence of~\eqref{min44}, we infer for the relaxation of $J_W$ that 
$$
J_{W_j}^{\rm rlx} \geq J_W^{\rm rlx} \ge \bigl(\min_{i=1, \dots, N} J_{W_i}^{\rm rlx}\bigr)^{\rm rlx}
$$
for all $j\in \{1, \ldots, N\}$. The identity~\eqref{128} follows then by taking the minimum over $j$, along with the observation that the minimum of finitely many weakly$^\ast$ lower semicontinuous functionals is again weakly$^\ast$ lower semicontinuous.

Without further hypotheses on $W_i$, it is not possible to deduce $J^{\rm rlx}_{W_i}= J_{W_i^{\times \rm lc}}$, as~Proposition~\ref{prop:structure-preservation} indicates. 
If $W_i$ is a single-well supremands like in~\eqref{singlewellW}, though, 
then $J_{W_i}$ is actually a classical $L^\infty$-functional, that is,
\begin{equation*}
J_{W_i}(u)  =\esssup_{(x,y)\in \Omega\times \Omega} \dist((u(x), u(y)), A_i\times A_i) = \esssup_{x\in \Omega} \dist(u(x), A_i),
\end{equation*}
cf.~\eqref{distmax}.
The relaxation of $J_{W_i}$ can therefore be determined via the classical theory of $L^\infty$-functionals  (see~\cite[Theorem~2.5]{Prinari06}), 
which gives
\begin{equation*}
J_{W_i}^{\rm rlx}(u)= \esssup_{x\in \Omega} \dist(u(x), A_i^{\rm co})
\end{equation*}
for $u\in L^\infty(\Omega;\R^m)$, or equivalently,
\begin{align*}
J^{\rm rlx}_{W_i}=J_{W_i^{\rm co}}=J_{W_i^{\times \rm lc}};
\end{align*} 
here $W_i^{\times \rm lc}(\xi, \zeta)= W_i^{\rm co}(\xi, \zeta)=\dist\bigl((\xi, \zeta), A^{\rm co}\times A^{\rm co}\bigr)$ for $(\xi, \zeta)\in \R^m\times \R^m$ and $W^{\rm co}$ denotes the convex envelope of $W$.

With these preparations, we now present two qualitatively different examples of relaxation results for nonlocal supremals of three-well type, which build on the insights from Examples~\ref{ex3} and~\ref{ex:W=min}. Even though, the relaxed functionals can both be calculated via the same formula through minimization over four nonlocal supremals (see~\eqref{99} below), one case is structure-preserving, while the other is not. 

\begin{example}[Structure preservation during relaxation]\label{ex:preservation} 

 Consider $W$ and $W_i$ for $i=1,2,3$ as~\eqref{W2} and~\eqref{singlewellW} with convex sets $A_1, A_2, A_3\subset \R^m$ with pairwise nonempty intersections. Then not only do the three sets $A_1, A_2, A_3$ fit into the framework of  Example~\ref{ex:W=min}\,b), but more generally, also $(A_i)_c=A_i+B_c(0)$ with $i=1,2,3$ for any $c\in \R$. Hence, all sublevel sets of $W$ have a basic Cartesian convexification and
\begin{align*}
L_c(W)^{\rm \times c}= \bigcup_{i=1}^3 [(A_i)_c \times  (A_i)_c] \cup [(M^c)^{\rm co}\times (M^c)^{\rm co}] =\bigcup_{i=1}^4 L_c(W_i)^{\rm \times c}
\end{align*}
for any $c\in \R$ with $M^c$ and $W_4$ as in~\eqref{Hc} and~\eqref{W4}, respectively.
This allows us to conclude with Proposition~\ref{prop:structure-preservation} that 
$$
J^{\rm rlx}_W = J_{W^{\times \rm lc}},
$$ 
where the the Cartesian level convex envelope of $W$ can be determined via $$W^{\times \rm lc} = \min_{i=1, 2,3,4} W_i^{\times \rm lc} = \min_{i=1,2,3,4} W_i^{\rm co}.$$ 
\end{example}
  
 \begin{example}[Loss of nonlocal supremal structure during relaxation]\label{ex:counterexample}
Let $W$ and $W_i$ for $i=1,2,3$ as~\eqref{W2} and~\eqref{singlewellW} with three sets $A_1, A_2, A_3\subset \R^m$ 
as in	 Example~\ref{ex3}\,b). Then $L_0(W)=K$ does not admit a basic Cartesian convexification, since
$$
\Pcal_{L_0(W)^{\times \rm c}} = \Pcal_{K^{\times \rm c}}=  \bigcup_{i=1}^3 \{A_i^{\rm co}\times A_i^{\rm co}\} \cup \{M^{\rm co}\times M^{\rm co}\} 
$$
but $M\times M \notin \Pcal_{L_0(W)}$. As a consequence of Proposition~\ref{prop:structure-preservation}, 
$$
J_W^{\rm rlx} \neq J_{W^{\times \rm lc}}.
$$
\vspace{-0.9cm}

\end{example}

We point out that in both examples above, $J_W^{\rm rlx}$ can be expressed through minimization over the relaxations of the more basic supremal functionals, namely,
\begin{align}\label{99}
J_W^{\rm rlx}= \min_{i=1,2,3,4} J_{W_i}^{\rm rlx} = \min_{i=1,2,3,4} J_{W_i^{\rm co}} = \min_{i=1,2,3,4} J_{W_i^{\times \rm lc}},
\end{align}
where $W_4$ is defined in~\eqref{W4}; this is a consequence of~\eqref{128}  and~\eqref{4!}. 
 And still,  only Example~\ref{ex:preservation} allows us to rewrite $J_W^{\rm rlx}$ as a nonlocal supremal functional.

\section{$L^p$-approximation}\label{sec:Lp-approx}

This section provides the proof of  the $\Gamma$-convergence result formulated in Theorem~\ref{theo:gral-vectorial-case}. Some of the arguments are inspired by~\cite[Theorem 3.1]{Champion-DePascale-Prinari}, where the authors show a power-law approximation result connecting single integrals and classical (local) supremal functionals.

\begin{proof}[Proof of Theorem \ref{theo:gral-vectorial-case}] 
We subdivide the proof into three natural steps, starting with the relative weak  sequential compactness in $L^q(\Omega;\R^m)$ of sequences of uniformly bounded energy and followed by the lower and upper bounds for the $\Gamma$-limits of $(I_W^p)_{p\geq q}$, see Definition~\ref{def:Gamma}.
Both bounds exploit the classical $L^p$-approximation of the $L^{\infty}$-norm as well as the relaxation result established in Section~\ref{sec:lower-rlx}. 

In the following, let $(p_j)_j \subset [q,\infty)$ be a sequence that converges indefinitely to $\infty$, i.e., $p_j\to \infty$ as $j\to \infty$.  Besides, suppose that $c_2=0$ in the growth condition~\eqref{lineargrowthandcoercivity}; otherwise, consider the translated double-integrand $W+c_2$ in place of $W$. 

	\medskip
	
\noindent	\textit{Step~1: Compactness.} 	
Let $(u_j)_j\subset L^{q}(\Omega;\R^m)$ with
\begin{align}\label{sup}
\sup_{j\in\N} I_W^{p_j}(u_j)
<\infty.
\end{align}
We show that $(u_j)_j$ is bounded in $L^q(\Omega;\R^m)$ which yields the existence of a subsequence of $(u_j)_j$ that converges weakly in $L^q(\Omega;\R^m)$. 

The growth condition on $W$ from below (see~\eqref{lineargrowthandcoercivity} with $c_2=0$) along with H\"older's inequality and $p_j\geq q$,
gives for each such $j$ that 
\begin{align}\label{est233}
I_W^{p_j}(u_j)& = \norm{W(v_{u_j})}_{L^{p_j}(\Omega\times \Omega)} \ge \Lcal^n(\Omega)^{2(\frac{1}{p_j}-\frac{1}{q})} \norm{W(v_{u_j})}_{L^{q}(\Omega\times \Omega)}  \\ 
& \ge c_1\Lcal^n(\Omega)^{2(\frac{1}{p_j}-\frac{1}{q})} \norm{v_{u_j}}_{L^{q}(\Omega\times \Omega;\R^m\times \R^m)}  \ge  c\, \Lcal^n(\Omega)^{\frac{2}{p_j}} \norm{u_j}_{L^{q}(\Omega;\R^m)},\nonumber
\end{align}	
where $v_{u_j}$ are the nonlocal fields as defined in~\eqref{eq:v_u} and $c>0$ is a constant independent of $j$.
 
In light of~\eqref{sup} and $\lim_{j\to \infty} \Lcal^{n}(\Omega)^{1/p_j}=1$, this shows that $(u_j)_j$ is uniformly bounded in $L^q(\Omega;\R^m)$, which implies immediately the existence of a weakly convergent subsequence of $(u_j)_j$ in $L^q(\Omega;\R^m)$.  

\medskip

\noindent \textit{Step~2: Lower bound.}  Let $(u_j)_j \subset L^q(\Omega;\R^m)$ and $u\in L^q(\Omega;\R^m)$ such that $u_j\rightharpoonup u$ in $L^q(\Omega;\R^m)$. 
	To show that
	\begin{equation*}
	\liminf_{j\to \infty} I_W^{p_j}(u_j) \ge I_W^\infty(u), 
	\end{equation*}
	we may assume without loss of generality that 
	$$
	\lim_{j\to \infty}I_W^{p_j}(u_j)=\liminf_{j\to \infty} I_W^{p_j}(u_j)<\infty,
	$$ 
	and thus, $u_j\in L^{p_j}(\Omega;\R^m)$ for all $j\in \N$.
	As a consequence of the estimate~\eqref{est233}, one has for any $r\ge q>1$ that
	\begin{align}\label{234}
 \lim_{j\to \infty}I_W^{p_j}(u_j) \geq  \Lcal^n(\Omega)^{-\frac{2}{r}}\liminf_{j\to\infty}I_W^{r}(u_j),
	\end{align} 
and $(u_j)_j$ can be regarded as uniformly bounded in $L^r(\Omega;\R^m)$.  
	 The latter allows us, in view of Proposition~\ref{prop:YM}\,$(i)$, to extract a subsequence (not relabeled) generating a Young measure $\nu \in L^{\infty}_{w}(\Omega; \mathcal{P}r(\R^m))$ with $u=[\nu]$, that is, 
	\begin{align*}
	u_j\stackrel{\rm YM}{\longrightarrow} \nu \quad \text{as $j\to \infty$.}
	\end{align*}
Then, by \cite[Proposition 2.3]{Ped97}, also the associated nonlocal fields $(v_{u_j})_j$ generate a Young measure, namely,
	\begin{align*}
	v_{u_j}\stackrel{\rm YM}{\longrightarrow} \Lambda \quad \text{as $j\to\infty$} \qquad \text{ with $\Lambda_{(x,y)}= \nu_x \otimes \nu_y$   for a.e. $(x,y)\in \Omega\times\Omega$, }
	\end{align*}
	and Proposition \ref{prop:YM}\,$(ii)$ implies 
\begin{align*}
\liminf_{j\to\infty}I_W^{r}(u_j) &= \liminf_{j\to \infty} \left( \int_{\Omega}\int_{\Omega} W^{r}(u_j(x), u_j(y)) \, d x\, dy  \right)^{\frac{1}{r}}\\
&\ge \left(\int_{\Omega}\int_{\Omega} \int_{\R^m\times \R^m} W^r(\xi, \zeta) \,d(\nu_x\otimes \nu_y)(\xi, \zeta) \, d x\, d y \right)^{\frac{1}{r}}.
\end{align*}
For any $r>s>1$, we can exploit the convexity of the map $t\mapsto t^{\frac{r}{s}}$ to obtain
$$
\liminf_{j\to\infty}I_W^{r}(u_j) \ge  \left(\int_{\Omega}\int_{\Omega} \left(\int_{\R^m\times \R^m} W^s(\xi, \zeta) \,d (\nu_x\otimes \nu_y)(\xi, \zeta) \right)^{\frac{r}{s}} \, d x\, d y \right)^{\frac{1}{r}}.
$$ 
Now, passing successively to the limits $r\to \infty$ and $s\to \infty$ leads 
via iterative classical approximation of the $L^{\infty}$-norm by~\eqref{Lpapprox-nu} (see also~\eqref{Lpapprox_classic}). 
to
\begin{align*}
\liminf_{r\to \infty}\liminf_{j\to\infty}I_W^{r}(u_j)  &\ge  \liminf_{s\to \infty} \esssup_{(x,y) \in \Omega \times \Omega}\left(\int_{\R^m\times\R^m} W^s(\xi, \zeta) \, d(\nu_x \otimes \nu_y)(\xi, \zeta)\right)^{\frac{1}{s}}\\[0.2cm]
& \geq  \displaystyle  \esssup_{(x,y) \in \Omega \times \Omega} [(\nu_x\otimes \nu_y)\text{-}\esssup_{(\xi, \zeta)\in \R^m\times\R^m}W(\xi, \zeta)]. 
\end{align*}

To sum up, we conclude together with~\eqref{234} and $\lim_{r\to \infty} \mathcal{L}^{n}(\Omega)^{\frac{2}{r}}=1$ that
	\begin{align}\label{eq887}
\lim_{j\to \infty}I_W^{p_j}(u_j) \geq	\liminf_{r\to \infty}\liminf_{j\to\infty}I_W^{r}(u_j)  \ge \esssup_{(x,y) \in \Omega \times \Omega} f_{W, \nu}(x, y) = \norm{f_{W, \nu}}_{L^\infty(\Omega\times \Omega)}, \nonumber 
	\end{align}
	where $f_{W, \nu}$ is defined as
\begin{equation*}
f_{W, \nu}(x,y):=(\nu_x\otimes \nu_y)\text{-}\esssup_{(\xi, \zeta)\in \R^m\times\R^m}W(\xi, \zeta) 
\end{equation*}
for $(x,y)\in \Omega\times\Omega$.
The proof of the liminf-inequality will be complete, once it is shown that
\begin{align}\label{287}
\norm{f_{W, \nu}}_{L^\infty(\Omega\times \Omega)} =  I_W^\infty(u).
\end{align}  

To this end, observe that the lower semicontinuity of $W$ implies 
\begin{align*}
f_{W, \nu}(x,y)=  \sup_{(\xi, \zeta)\in\supp(\nu_x\otimes \nu_y)} W(\xi, \zeta)\quad
\end{align*} 
for $(x,y)\in \Omega\times\Omega$  
(see~\eqref{ess-supp}), and therefore,
\begin{align*}
\norm{f_{W, \nu}}_{L^\infty(\Omega\times \Omega)} 
&= \inf\{c\in \R: f_{W, \nu}(x, y)\leq  c\text{ for a.e.~$(x, y)\in\Omega\times \Omega$}\} \\ 
&= \inf\{c\in \R: \supp(\nu_x\otimes\nu_y)\subset L_c(W) \text{ for a.e.~$(x, y)\in \Omega\times \Omega$}\} \nonumber\\
&=\inf\{c\in \R: \nu\in \Vcal_{L_c(W)}\}, \nonumber
\end{align*}
with $\Vcal_{L_c(W)}$ as introduced in~\eqref{Vcal_K}. 
In combination with the equivalence from~\eqref{equivalenceVcal}, 
stating that 
\begin{align*}
u=[\nu]\in \Acal_{L_c(W)}^\infty \qquad \text{if and only if}\qquad  \nu\in \Vcal_{L_c(W)}
\end{align*}
for any $c\in \R$, it follows that
\begin{align*}
\norm{f_{W, \nu}}_{L^\infty(\Omega\times \Omega)} = \inf\{c\in \R: u\in \Acal_{L_c(W)}^\infty\}=\begin{cases} J_{W}^{\rm rlx}(u) & \text{for $u\in L^\infty(\Omega)$,}\\ \infty &  \text{otherwise,}
\end{cases} 
\end{align*}
which is ~\eqref{287}. Note that the last identity uses Lemma~\ref{lem:repres_relaxation_abstract} and the simple fact that $u\notin \Acal^\infty_{L_c(W)}$ for any $c\in \R$ if $u\notin L^\infty(\Omega;\R^m)$.

\medskip	

	\noindent \textit{Step~3: Upper bound.} Let $u\in L^\infty(\Omega;\R^m)$. The construction of a recovery sequence  $(u_j)_j \subset L^q(\Omega;\R^m)$ with the properties that $u_j\rightharpoonup u$ in $L^q(\Omega;\R^m)$ as $j\to\infty$ and
	\begin{equation*}
	\limsup_{j\to \infty} I_W^{p_j}(u_j) \le  J_{W}^{\rm rlx}(u)
	\end{equation*}  
 is straightforward. Indeed, the definition of the relaxed functional in~\eqref{JW_rlx}
  provides a sequence $(u_k)_k \subset L^{\infty}(\Omega;\R^m)$ such that $u_k\rightharpoonup^* u$ in $L^{\infty}(\Omega;\R^m)$ as $k\to \infty$ and
	\begin{equation*}
	\lim_{k\to \infty} J_{W}(u_k)=J_{W}^{\rm rlx}(u). 
	\end{equation*} 
		Since $W(v_{u_k})$ is essentially bounded on $\Omega\times \Omega$ due to the growth assumption on $W$ in~\eqref{lineargrowthandcoercivity}, ~\eqref{Lpapprox-nu} and~\eqref{eq:J=Jhat} imply
	\begin{align*}
	\lim_{j\to \infty} I_W^{p_j}(u_k) &= \lim_{j\to \infty} \norm{W(v_{u_k})}_{L^{p_j}(\Omega\times \Omega)} 
= \norm{W(v_{u_k})}_{L^{\infty}(\Omega\times \Omega)} 
	 = J_W(u_k);
	\end{align*}	
	 recall the notation in~\eqref{eq:v_u}. 
	Hence, in the limit $k\to \infty$, 
	\begin{equation*}
	\lim_{k\to \infty} \lim_{j\to \infty} I_W^{p_j}(u_k) = J_{W}^{\rm rlx}(u),
	\end{equation*}
and we conclude by taking a diagonal sequence in the sense of Attouch, cf.~\cite[Corollary 1.16]{Att84}. 	
\end{proof}

\appendix
\section{Basic technical tools}

 We collect here a few auxiliary results that are useful for our analysis of Cartesian convex hulls. 

The following sequential compactness for sequences of compact sets with respect to the Hausdorff distance is a version of a classical statement (see e.g.~\cite[Section 3.9]{Gbook}) tailored to our needs.

\begin{lemma}\label{lem:Hausdorffconverg}
Let $(K_j)_j\subset \R^m\times \R^m$ be a sequence of compact sets with $K_{j+1}\subset K_j$ for all  $j\in \N$ and $K:=\bigcap_{j\in \N} K_j$. For any sequence of nonempty compact sets $(A_j)_j\subset \R^m$ such that $A_j\times A_j\subset K_j$ for all $j\in \N$, there exists a nonempty compact set $A\subset \R^m$ satisfying $A\times A\subset K$ and
\begin{align*}
\liminf_{j\to \infty} d_H(A_j^{\rm co}, A^{\rm co})=0.
\end{align*} 
\end{lemma}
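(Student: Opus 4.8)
The plan is to combine the Blaschke selection principle with the (elementary) continuity of the convex-hull operation under the Hausdorff distance, taking care to pass to a Hausdorff limit of the sets $A_j$ \emph{themselves} rather than of their convex hulls.

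First I would observe that all the $A_j$ live in a single fixed compact set: if $\xi\in A_j$, then $(\xi,\xi)\in A_j\times A_j\subset K_j\subset K_1$, so $A_j\subset Q:=\pi_1(K_1)$, where $\pi_1$ denotes the projection of $\R^m\times\R^m$ onto its first factor; $Q$ is compact as the continuous image of the compact set $K_1$. By the sequential compactness of the space of nonempty compact subsets of the compact metric space $Q$ with respect to the Hausdorff distance (Blaschke selection theorem, cf.~\cite[Section 3.9]{Gbook}), there is a subsequence $(A_{j_k})_k$ and a nonempty compact set $A\subset Q$ with $d_H(A_{j_k},A)\to0$ as $k\to\infty$.

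Next I would check that $A\times A\subset K$. Fix $(\xi,\zeta)\in A\times A$. Since $d_H(A_{j_k},A)\to0$, one can pick $\xi_k,\zeta_k\in A_{j_k}$ with $\xi_k\to\xi$ and $\zeta_k\to\zeta$, so that $(\xi_k,\zeta_k)\in A_{j_k}\times A_{j_k}\subset K_{j_k}$ and $(\xi_k,\zeta_k)\to(\xi,\zeta)$. Given any $l\in\N$, we have $j_k\ge l$ for $k$ large, hence $(\xi_k,\zeta_k)\in K_{j_k}\subset K_l$ for all such $k$; as $K_l$ is closed, $(\xi,\zeta)\in K_l$. Since $l$ was arbitrary, $(\xi,\zeta)\in\bigcap_{l\in\N}K_l=K$, proving $A\times A\subset K$. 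The key point here is that the nesting $K_{j+1}\subset K_j$ is exactly what lets a subsequential limit of points of $K_{j_k}$ land in the intersection $K$; this is why one must work with $A_{j_k}$, for which the inclusion $A_{j_k}\times A_{j_k}\subset K_{j_k}$ is available, and not with $A_{j_k}^{\rm co}$, whose product need not sit inside $K_{j_k}$.

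Finally I would recall that the map $B\mapsto B^{\rm co}$ is $1$-Lipschitz with respect to $d_H$ on nonempty compact subsets of $\R^m$: if $A_{j_k}$ lies in the closed $\epsilon$-neighborhood $A+\overline{B_\epsilon(0)}$ of $A$, then $A_{j_k}^{\rm co}\subset\bigl(A+\overline{B_\epsilon(0)}\bigr)^{\rm co}=A^{\rm co}+\overline{B_\epsilon(0)}$, using that Minkowski sum commutes with convexification and that the ball is convex; arguing symmetrically gives $d_H(A_{j_k}^{\rm co},A^{\rm co})\le d_H(A_{j_k},A)$. Hence $d_H(A_{j_k}^{\rm co},A^{\rm co})\to0$, and therefore $\liminf_{j\to\infty}d_H(A_j^{\rm co},A^{\rm co})=0$, which completes the proof. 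I do not anticipate a genuine obstacle; the only points that merit care are the one just highlighted (limit of $A_{j_k}$, not of the hulls) and the routine verification that Blaschke's theorem produces a \emph{nonempty} limit, which holds since the hyperspace of nonempty compacta is the one that is compact.
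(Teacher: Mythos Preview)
Your proof is correct and follows essentially the same approach as the paper: Blaschke selection on the $A_j\subset\pi_1(K_1)$, the $1$-Lipschitz property of $B\mapsto B^{\rm co}$ under $d_H$, and the nestedness of the $K_j$ to conclude $A\times A\subset K$. The only cosmetic difference is that the paper verifies $A\times A\subset K$ via the estimate $\dist((\xi,\zeta),K)\le d_H(A\times A,A_j\times A_j)+d_H(K_j,K)$ together with $d_H(K_j,K)\to 0$, whereas you pick approximating points and use closedness of each $K_l$ directly; both arguments are equivalent.
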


\begin{proof}
 Since the projection onto the first vector variable of $K_1$, denoted by $\pi_1(K_1)$, is compact  and  $A_j\subset \pi_1(K_1)$ for all $j\in \N$, we exploit 
 the compactness of the metric space of nonempty compact subsets of a compact set in $\R^m$ equipped with the Hausdorff distance
  to find a compact set $A\subset \pi_1(K_1)$ as the Hausdorff limit of a (non-relabeled) subsequence of $(A_j)_j$, that is,
\begin{align}\label{55}
d_H(A_j, A)\to 0 \quad\text{as $j\to \infty$.}
\end{align}

A well-known result~\cite[Theorem 3.9.4]{Gbook} on the relation between Hausdorff distances and convexifications states that 
any two nonempty and compact sets $C, D\subset \R^m$
satisfy 
\begin{align*}
d_H(C^{\rm co}, D^{\rm co})\leq d_H(C, D).
\end{align*}
Applied to~\eqref{55}, this yields $d_H(A_j^{\rm co}, A^{\rm co})\to 0$ as $j\to \infty$.

To show that $A\times A\subset K$, one observes that for any $(\xi, \zeta)\in A\times A$ and $j\in \N$,
\begin{align*}
\dist\bigl((\xi, \zeta), K\bigr) &\leq \dist\bigl((\xi, \zeta), K_j\bigr) + d_H(K_j, K)\leq d_H(A\times A, A_j\times A_j)+d_H(K_j, K) \\ &\leq 
d_H(A_j, A)+d_H(K_j, K).
\end{align*}
Together with~\eqref{55} and $\lim_{j\to \infty}d_H(K_j, K)=0$, which follows from the fact that the limit in Hausdorff distance of a sequence of nested nonempty compact sets is exactly their intersection, we conclude that $(\xi, \zeta)\in K$.
\end{proof}

 The next lemma can be seen as the counterpart of \cite[Lemma 4.9]{KrZ19} for Cartesian convex hulls in place of separately convex ones.

\begin{lemma}\label{lemma:basic-topo} Let $(K_j)_j\subset\R^m\times \R^m$ be a sequence of nonempty, symmetric, diagonal, and compact sets with $K_{j+1}\subset K_j$ for all $j\in \N$. Then,  $\bigcap_{j\in\N} K_j^{\times \rm c} = (\bigcap_{j\in \N} K_j)^{\times \rm c}$. 
\end{lemma}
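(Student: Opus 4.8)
The plan is to prove the two inclusions separately, the easy one first. Throughout, set $K:=\bigcap_{j\in\N}K_j$; as a decreasing intersection of nonempty, symmetric, diagonal, compact sets, $K$ itself is nonempty, symmetric, diagonal, and compact, so $K^{\times\rm c}$ is well defined. The inclusion $K^{\times\rm c}\subset\bigcap_{j\in\N}K_j^{\times\rm c}$ is immediate: since $K\subset K_j$, any Cartesian convex set containing $K_j$ also contains $K$, whence $K^{\times\rm c}\subset K_j^{\times\rm c}$ for each $j$. The heart of the proof is the reverse inclusion.

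For the reverse inclusion I would work with the iterated hulls of Lemma~\ref{lem:well-definition-Ewsc}: for a symmetric and diagonal set $E$ write $E_0^{\times\rm c}=E$ and let $E_k^{\times\rm c}$ be given by~\eqref{orderone} for $k\in\N$; note that $E\mapsto E_k^{\times\rm c}$ is monotone for each fixed $k$ (an easy induction on $k$) and that $E_{k-1}^{\times\rm c}\subset E_k^{\times\rm c}$. The central claim is
$$\bigcap_{j\in\N}(K_j)_k^{\times\rm c}=K_k^{\times\rm c}\qquad\text{for every }k\ge 0,$$
which I would prove by induction on $k$. For $k=0$ this is the definition of $K$. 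In the inductive step the inclusion ``$\supset$'' is monotonicity of the iterated hull, and for ``$\subset$'' one takes $(\xi,\zeta)\in\bigcap_j(K_j)_k^{\times\rm c}$; by~\eqref{orderone} and Zorn's lemma there is, for each $j$, a maximal Cartesian square $B_j\times B_j\in\Pcal_{(K_j)_{k-1}^{\times\rm c}}$ with $(\xi,\zeta)\in B_j^{\rm co}\times B_j^{\rm co}$, and $B_j$ is compact since $(K_j)_{k-1}^{\times\rm c}$ is compact by Lemma~\ref{lem:compactness}. The sets $(K_j)_{k-1}^{\times\rm c}$ form a decreasing sequence of compact sets whose intersection is $K_{k-1}^{\times\rm c}$ by the inductive hypothesis, so Lemma~\ref{lem:Hausdorffconverg} yields a compact $B\subset\R^m$ with $B\times B\subset K_{k-1}^{\times\rm c}$ and $d_H(B_j^{\rm co},B^{\rm co})\to 0$ along a subsequence. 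Since $d_H(B_j^{\rm co}\times B_j^{\rm co},B^{\rm co}\times B^{\rm co})\le d_H(B_j^{\rm co},B^{\rm co})$ and $(\xi,\zeta)\in B_j^{\rm co}\times B_j^{\rm co}$, it follows that $\dist((\xi,\zeta),B^{\rm co}\times B^{\rm co})=0$, hence $(\xi,\zeta)\in B^{\rm co}\times B^{\rm co}\subset K_k^{\times\rm c}$ by~\eqref{orderone}. This completes the induction.

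It then remains to pass from this claim to the statement of the lemma. By Lemma~\ref{lem:well-definition-Ewsc}, $K^{\times\rm c}=\bigcup_{k}K_k^{\times\rm c}=\bigcup_k\bigcap_j(K_j)_k^{\times\rm c}$, while $\bigcap_j K_j^{\times\rm c}=\bigcap_j\bigcup_k(K_j)_k^{\times\rm c}$; the inclusion $\bigcap_j K_j^{\times\rm c}\subset K^{\times\rm c}$ thus amounts to interchanging the intersection over $j$ with the union over the iteration level $k$. This is the step I expect to be the main obstacle, since such an interchange is not valid for arbitrary families. It is unproblematic once the iteration depths stay bounded: for $(\xi,\zeta)\in\bigcap_j K_j^{\times\rm c}$ the minimal $k_j$ with $(\xi,\zeta)\in(K_j)_{k_j}^{\times\rm c}$ is non-decreasing in $j$ (because $(K_j)_k^{\times\rm c}$ decreases in $j$), and if $k_j\le k_0$ for all $j$ then $(\xi,\zeta)\in(K_j)_{k_0}^{\times\rm c}$ for every $j$, so $(\xi,\zeta)\in K_{k_0}^{\times\rm c}\subset K^{\times\rm c}$ by the claim. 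The genuinely delicate case is $k_j\to\infty$; I would handle it by extracting, from Carathéodory representations realizing $(\xi,\zeta)\in(K_j)_{k_j}^{\times\rm c}$ — which involve only finitely many points, of cardinality at most $2(m+1)$, all lying in the compact set $K_1^{\rm co}$ — a Hausdorff limit of these finite configurations, and then propagating membership through it, once more invoking Lemma~\ref{lem:Hausdorffconverg} and the compactness of the lower-level iterates; the subtlety is that the iterated sets carrying these configurations change level with $j$, so one must trap the limiting configuration inside a single $K_k^{\times\rm c}$. The parallel statement for separately convex hulls is \cite[Lemma~4.9]{KrZ19}, and I would model the bookkeeping of this last step on that argument.
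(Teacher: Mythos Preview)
Your approach is essentially identical to the paper's: prove the level-wise inclusion $\bigcap_j (K_j)_k^{\times\rm c}\subset K_k^{\times\rm c}$ by induction on $k$ (using Lemma~\ref{lem:Hausdorffconverg} together with the compactness of the iterates from Lemma~\ref{lem:compactness}), and then combine this with the representation $E^{\times\rm c}=\bigcup_k E_k^{\times\rm c}$ from Lemma~\ref{lem:well-definition-Ewsc}. The inductive argument you give matches the paper's almost line for line.

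Where you and the paper differ is in how the interchange $\bigcap_j\bigcup_k(K_j)_k^{\times\rm c}=\bigcup_k\bigcap_j(K_j)_k^{\times\rm c}$ is handled. The paper simply writes this as an equality in its displayed chain, without comment. You are right to flag it: only the inclusion $\supset$ is automatic, and your reduction to the two cases $k_j$ bounded versus $k_j\to\infty$ is exactly the issue. The bounded case is fine, as you note. Your sketch for the unbounded case, however, does not close the gap. Extracting Hausdorff limits of finite Carath\'eodory configurations from $(K_j)_{k_j}^{\times\rm c}$ does not by itself place the limiting point in any fixed $K_k^{\times\rm c}$, because Lemma~\ref{lem:Hausdorffconverg} needs a \emph{fixed} nested sequence of ambient compacta, whereas here both the index $j$ and the iteration level $k_j$ are moving simultaneously; the limit set $B\times B$ you would obtain lies only in $\bigcap_j(K_j)_{k_j-1}^{\times\rm c}$, which is not a priori any $K_k^{\times\rm c}$. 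The reference to \cite[Lemma~4.9]{KrZ19} does not help directly either, since that argument faces the same interchange issue. So the gap you identify is genuine and your proposal does not resolve it --- but the paper's own proof passes over precisely this point in silence.
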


\begin{proof} Let  $K:=\bigcap_{j\in\N}K_j$.		
	While $K^{\times \rm c} = (\bigcap_{j\in\N} K_j)^{\times \rm c} \subset \bigcap_{j\in\N} (K_j)^{\times \rm c}$ is clear, the reverse inclusion follows based on the previous lemma and Lemma~\ref{lem:well-definition-Ewsc}. Indeed, we show below that
		\begin{align}\label{bigcap66}
	\bigcap_{j\in \N}(K_j)^{\times \rm c}_k \subset K^{\times \rm c}_k
	\end{align}
	for any $k\in \N$.
Then, Lemma~\ref{lem:well-definition-Ewsc} allows us to infer
	\begin{align*}
	\bigcap_{j\in \N} K_j^{\times \rm c} = \bigcap_{j\in \N} \bigcup_{k\in \N} (K_j)^{\times \rm c}_k  =  \bigcup_{k\in \N} \bigcap_{j\in \N}(K_j)^{\times \rm c}_k \subset  \bigcup_{k\in \N}  K_k^{\times \rm c} =K^{\times \rm c},
	\end{align*} 
	which yields the statement.

 We prove~\eqref{bigcap66} via induction. Starting with $k=1$, 
 let $(\xi, \zeta)\in (K_j)^{\times \rm c}_1$ for all $j\in \N$. 
Then there exists for each $j\in \N$ a set $B_j\subset\R^m$ with 
\begin{center}
$B_j\times B_j\in  \Pcal_{K_j}=\Pcal_{(K_j)^{\times \rm c}_0} $ \quad and \quad $\xi, \zeta\in B_j^{\rm co}$. 
\end{center}
Notice that all these $B_j$ are compact as maximal Cartesian squares of compact sets. 
According to Lemma~\ref{lem:Hausdorffconverg}, there is then a nonempty compact set $B\subset \R^m$ such that $B\times B\subset  K= \bigcap_{j\in \N}(K_j)_0^{\times \rm c}$ 
and
	\begin{align*}
\max\bigl\{ \dist(\xi, B^{\rm co}), \dist(\zeta, B^{\rm co})\bigr\}\leq \liminf_{j\to \infty} d_H(B_j^{\rm co}, B^{\rm co})
=0.
	\end{align*} 
	\color{black}
This shows $(\xi, \zeta)\in B^{\rm co}\times B^{\rm co}\subset K_{1}^{\times \rm c}$. 
Repeating the same argument successively for $k\in \N$, taking into account Lemma~\ref{lem:compactness}, 
yields~\eqref{bigcap66}. 
\end{proof}
\color{black}

For an arbitrary sequence of sets $(E_j)_j\subset \R^m\times \R^m$ it holds that 
$$
\bigcap_{j\in\N} \Acal_{E_{j}} =\Acal_{\cap_{j\in\N}E_j},
$$
see \cite[Lemma~5.2]{KrZ19}. 
We provide an asymptotic version of this identity when the sets are compact and nested.

\begin{lemma}\label{lem:intersections_AKeps}
Let $(K_j)_j\subset\R^m\times \R^m$ be a sequence of nonempty, symmetric, diagonal, and compact sets with $K_{j+1}\subset K_j$ for all $j\in \N$, and let $K:=\bigcap_{j\in\N}K_j$.
Then,
\begin{align*}
 \bigcap_{j\in\N} \Acal_{K_{j}}^\infty = \Acal_{K}^\infty. 
\end{align*}
\end{lemma}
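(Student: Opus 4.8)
The plan is to prove the two inclusions separately. The inclusion $\bigcap_{j\in\N}\Acal_{K_j}^\infty \supset \Acal_K^\infty$ is immediate: since $K\subset K_j$ for every $j$, the order principle~\eqref{order_principle} (or directly the definition of $\Acal$) gives $\Acal_K\subset \Acal_{K_j}$, hence every weak$^\ast$ limit of a sequence in $\Acal_K$ is a weak$^\ast$ limit of a sequence in $\Acal_{K_j}$, so $\Acal_K^\infty\subset \Acal_{K_j}^\infty$ for all $j$, and intersecting over $j$ yields the claim.

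The substance is the reverse inclusion $\bigcap_{j\in\N}\Acal_{K_j}^\infty \subset \Acal_K^\infty$. Here I would use the explicit representation~\eqref{characterization_inclusionAcal1} of weak$^\ast$ closures, namely $\Acal_{K_j}^\infty=\bigcup_{B\times B\in\Pcal_{K_j}}L^\infty(\Omega;B^{\rm co})$, together with Lemma~\ref{lem:Hausdorffconverg} on Hausdorff convergence of nested compact sets. Fix $u\in\bigcap_{j\in\N}\Acal_{K_j}^\infty$. Then for each $j$ there is a compact set $B_j\subset\R^m$ with $B_j\times B_j\in\Pcal_{K_j}$ and $u\in L^\infty(\Omega;B_j^{\rm co})$, i.e. $u(x)\in B_j^{\rm co}$ for a.e.\ $x\in\Omega$. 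By Lemma~\ref{lem:Hausdorffconverg} applied to the nested sequence $(K_j)_j$ and the compact squares $B_j\times B_j\subset K_j$, there is (after passing to a non-relabeled subsequence) a nonempty compact $B\subset\R^m$ with $B\times B\subset K$ and $\liminf_{j\to\infty}d_H(B_j^{\rm co},B^{\rm co})=0$. Along that subsequence, for a.e.\ $x$ one has $\dist(u(x),B^{\rm co})\le d_H(B_j^{\rm co},B^{\rm co})\to 0$ (taking the liminf), hence $u(x)\in B^{\rm co}$ for a.e.\ $x$; thus $u\in L^\infty(\Omega;B^{\rm co})=\Acal_{B^{\rm co}\times B^{\rm co}}$. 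Since $B\times B\subset K$, after enlarging $B$ to a maximal Cartesian square $\widetilde B\times\widetilde B\in\Pcal_K$ (by Zorn's lemma, as noted in Section~\ref{subsec:diagmax}) we get $B^{\rm co}\subset\widetilde B^{\rm co}$ by Lemma~\ref{lem:alternative}-type convexity of the hull, so $u\in L^\infty(\Omega;\widetilde B^{\rm co})\subset\Acal_K^\infty$ by~\eqref{characterization_inclusionAcal1} again.

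One point needs a little care: in the argument above, the set $B^{\rm co}$ for $B\times B\subset K$ need not itself equal $\widetilde B^{\rm co}$ for a maximal square, but that is harmless since $\Acal_{B^{\rm co}\times B^{\rm co}}=L^\infty(\Omega;B^{\rm co})\subset L^\infty(\Omega;\widetilde B^{\rm co})$ whenever $B\subset\widetilde B$; alternatively one can simply invoke the order principle~\eqref{order_principle} together with $B^{\rm co}\times B^{\rm co}\subset K^{\times\rm c}$ and $\Acal_K^\infty\supset\Acal_{K}\supset$ — actually the cleanest route is: $B\times B\subset K$ gives $\Acal_{B\times B}\subset\Acal_K$, and passing to weak$^\ast$ closures with~\eqref{characterization_inclusionAcal1} for the single square $B\times B$ gives $\Acal_{B^{\rm co}\times B^{\rm co}}=\Acal_{B\times B}^\infty\subset\Acal_K^\infty$, so $u\in\Acal_K^\infty$ directly.

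The main obstacle is the subsequence issue in Lemma~\ref{lem:Hausdorffconverg}: the Hausdorff convergence $B_j^{\rm co}\to B^{\rm co}$ only holds along a subsequence, and a priori different $u$'s might force different subsequences — but since we only need to produce \emph{one} maximal Cartesian square of $K$ whose convex hull contains the range of $u$, extracting a single subsequence suffices, and the liminf in the statement of Lemma~\ref{lem:Hausdorffconverg} is exactly what lets us conclude $\dist(u(x),B^{\rm co})=0$ a.e. No compactness of $K^{\times\rm c}$ or basic-convexification hypothesis is needed here; the argument is purely at the level of the solution sets.
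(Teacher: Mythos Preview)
Your proof is correct and follows essentially the same approach as the paper: use the representation~\eqref{characterization_inclusionAcal1} to produce maximal Cartesian squares $B_j\times B_j\in\Pcal_{K_j}$ with $u\in L^\infty(\Omega;B_j^{\rm co})$, apply Lemma~\ref{lem:Hausdorffconverg} to extract a Hausdorff-convergent subsequence with limit $B$ satisfying $B\times B\subset K$, and conclude $u\in L^\infty(\Omega;B^{\rm co})\subset\Acal_K^\infty$. The paper phrases the key step as $\bigcap_{j\in\N}B_j^{\rm co}\subset B^{\rm co}$ (referring to the proof of~\eqref{bigcap66} for details), which is precisely what your distance estimate $\dist(u(x),B^{\rm co})\le d_H(B_j^{\rm co},B^{\rm co})$ establishes.
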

\begin{proof}
To address the nontrivial inclusion, take $u\in  \bigcap_{j\in\N} \Acal_{K_{j}}^\infty$. In light of the representation formula~\eqref{characterization_inclusionAcal1},  there exists a sequence $(B_j)_j\subset \R^m$ with $B_j\times B_j\in \Pcal_{K_{j}}$ for all $j\in \N$ such that 
\begin{align*}
\textstyle u\in \bigcap_{j\in\N} L^\infty(\Omega;B_j^{\rm co}) = L^\infty(\Omega; \bigcap_{j\in\N} B_j^{\rm co}).
\end{align*} 
We can use Lemma~\ref{lem:Hausdorffconverg} 
 to infer the existence of a nonempty compact set $B\subset \R^m$ 
with $B\times B\subset K$ and 
\begin{align*}
\bigcap_{j\in \N}B_j^{\rm co} \subset B^{\rm co};
\end{align*} the details of this argument are similar to those in the proof of~\eqref{bigcap66}. \color{black}
Then, $u\in L^\infty(\Omega;B^{\rm co}) \subset \Acal_K^\infty$, exploiting again~\eqref{characterization_inclusionAcal1}. 
\end{proof}

%
%
\section*{Acknowledgments}
CK and AR were supported by the Dutch Research Council NWO through the project TOP2.17.012.
Part of this research was done while CK and AR were affiliated with Utrecht University; in particular, CK acknowledges partial support by a Westerdijk Fellowship from Utrecht University.
EZ is a member of INdAM GNAMPA whose support through GNAMPA Project 2020 \textquoteleft Analisi variazionale di modelli non-locali nelle scienze applicate\textquoteright \,is gratefully acknowledged.


\bibliographystyle{abbrv}
\bibliography{CKAREZ}
\end{document}